\documentclass[10pt,reqno]{amsart}

\usepackage[T1]{fontenc}
\usepackage[utf8]{inputenc}
\usepackage{lmodern}
\usepackage{microtype}

\usepackage{amsmath,amssymb,amsthm,mathtools}
\usepackage{mathrsfs}
\usepackage{enumitem}
\usepackage{xcolor}
\usepackage{hyperref}
\usepackage{tikz-cd}
\usepackage[nameinlink,capitalise,noabbrev]{cleveref}
\usepackage[most]{tcolorbox}

\hypersetup{
    colorlinks=true,
    linkcolor=blue,
    citecolor=blue,
    urlcolor=blue,
    pdftitle={Proper Homotopy Nonrigidity of Open Contractible Manifolds},
    pdfauthor={Donghan Kim}
}

\numberwithin{equation}{section}

\newtheorem{maintheorem}{Theorem}

\newtheorem{theorem}{Theorem}
\newtheorem{proposition}{Proposition}
\newtheorem{lemma}{Lemma}
\newtheorem{corollary}{Corollary}

\theoremstyle{definition}
\newtheorem{definition}{Definition}

\newtheorem{problem}{Open Problem}

\theoremstyle{remark}
\newtheorem{remark}{Remark}

\crefname{maintheorem}{Theorem}{Theorems}
\Crefname{maintheorem}{Theorem}{Theorems}
\crefname{problem}{open problem}{open problems}
\Crefname{problem}{Open Problem}{Open Problems}

\newcommand{\Z}{\mathbb Z}
\newcommand{\Q}{\mathbb Q}

\newcommand{\C}{\mathbb C}

\newcommand{\TOP}{\mathrm{TOP}}
\newcommand{\STOP}{\mathrm{STOP}}
\newcommand{\PL}{\mathrm{PL}}
\newcommand{\DIFF}{\mathrm{DIFF}}

\newcommand{\cS}{\mathcal S}
\newcommand{\cN}{\mathcal N}
\newcommand{\Wh}{\operatorname{Wh}}
\newcommand{\Aut}{\operatorname{Aut}}
\newcommand{\Out}{\operatorname{Out}}
\newcommand{\Int}{\operatorname{int}}
\newcommand{\im}{\operatorname{im}}

\newcommand{\id}{\operatorname{id}}

\newcommand{\simeqp}{\simeq_{\mathrm{prop}}}

\newcommand{\rhot}{\widetilde{\rho}}
\newcommand{\Susp}{\Sigma}
\newcommand{\SL}{\operatorname{SL}}
\newcommand{\PSL}{\operatorname{PSL}}
\newcommand{\cR}{\mathcal R}
\newcommand{\rhoabs}{\rho_{\mathrm{abs}}}

\title{Proper Homotopy Nonrigidity of Open Contractible Manifolds}

\author{Donghan Kim}

\address{
Department of Mathematical Sciences,
Korea Advanced Institute of Science and Technology,
Daejeon, Republic of Korea
}

\subjclass[2020]{
Primary 57R65, 57Q12;
Secondary 57N15, 57N70, 19J25
}

\keywords{
proper homotopy equivalence,
open contractible manifold,
compact contractible manifold,
topology at infinity,
homology sphere,
\(h\)-cobordism,
rho invariant,
surgery theory
}

\date{}

\begin{document}

\begin{abstract}
We study whether proper homotopy equivalence classifies open contractible manifolds arising as interiors of compact contractible manifolds. In dimensions three and four, it does: any two such interiors that are properly homotopy equivalent are homeomorphic. In every even dimension $N\geq 6$ and every odd dimension $N\geq 9$, however, it does not: a single proper homotopy type contains infinitely many pairwise nonhomeomorphic smooth open contractible $N$-manifolds. The cases of dimensions five and seven remain unresolved.
\end{abstract}

\maketitle

\section{Introduction}

Every contractible space has the homotopy type of a point, so ordinary
homotopy theory provides no classification of open contractible
manifolds.  Their topology at infinity, however, can be highly
nontrivial, as illustrated by the Whitehead manifold, interiors of
compact contractible manifolds, and the contractible manifolds arising
from the Davis reflection-group construction; see
\cite{DavisReflection,GuilbaultEnds}.  Proper homotopy theory retains
part of this missing information through the pro-homotopy type of
neighborhoods of infinity.

A classical theorem of Stallings~\cite{StallingsEuclidean} states that
a contractible open \(n\)-manifold, \(n\geq5\), is homeomorphic to
\(\mathbb R^n\) if and only if it is simply connected at infinity.
Consequently,
\[
M\simeqp\mathbb R^n
\quad\Longrightarrow\quad
M\cong_{\TOP}\mathbb R^n.
\]
Thus the Euclidean proper homotopy type is topologically rigid.  This
raises the natural question of whether proper homotopy equivalence
classifies other open contractible manifolds up to homeomorphism.

We study this question in the canonical tame subclass consisting of
interiors of compact contractible manifolds.  This class contains the
Euclidean model \(\mathbb R^n=\Int(B^n)\), but it also permits
nontrivial topology at infinity: the collared end records the boundary
of the compactification.  The resulting dimensional picture is nearly
complete.

\begin{maintheorem}[Dimensional rigidity and nonrigidity]
\label{thm:main-classification}
Among interiors of compact contractible manifolds, the following hold.
\begin{enumerate}[label=\textup{(\roman*)}]
\item In dimensions \(N=3,4\), proper homotopy equivalence determines
the topological homeomorphism type.  In fact, the corresponding compact
contractible manifolds are homeomorphic.
\item In every even dimension \(N\geq6\) and every odd dimension
\(N\geq9\), one proper homotopy type contains infinitely many pairwise
nonhomeomorphic smooth open contractible \(N\)-manifolds that are
interiors of compact contractible smooth manifolds.
\end{enumerate}
Dimensions five and seven remain unresolved.
\end{maintheorem}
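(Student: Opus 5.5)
The plan is to reduce everything to the boundaries of the compactifications. Let \(W,W'\) be compact contractible \(N\)-manifolds. A proper homotopy equivalence \(\Int W\simeqp\Int W'\) restricts, after a standard end argument, to a homotopy equivalence of the collared ends \(\partial W\times[0,1)\simeq\partial W'\times[0,1)\). So it induces a homotopy equivalence \(\partial W\simeq\partial W'\), and indeed an \(h\)-cobordism-type relation at infinity, since both ends are tame with stable pro-fundamental group \(\pi_1(\partial W)\). Conversely, a homotopy equivalence of boundaries that extends over the contractible manifolds gives a proper homotopy equivalence of interiors. Each boundary is a homology sphere. Moreover, \(\Int W\cong\Int W'\) forces, by uniqueness of collars for tame ends (Siebenmann), that \(\partial W\) and \(\partial W'\) are \(h\)-cobordant. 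In dimension \(N\ge 6\) that is homeomorphism up to the Whitehead-torsion \(s\)-cobordism subtlety, and the \(h\)-cobordism is inertial because it sits inside the end.

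For (i), take \(N=3\). A compact contractible \(3\)-manifold has boundary a homotopy \(2\)-sphere, hence \(S^2\). Perelman's theorem then gives \(W\cong B^3\). Next take \(N=4\). The boundary is a homology \(3\)-sphere \(\Sigma\), and the proper homotopy equivalence yields \(\Sigma\simeq\Sigma'\). Closed \(3\)-manifold homology spheres are determined up to homeomorphism by homotopy type, by geometrization together with the fact that lens spaces never occur as homology spheres apart from \(S^3\). So \(\partial W\cong\partial W'\). Freedman's theorem then says any homology \(3\)-sphere bounds a contractible topological \(4\)-manifold that is unique up to homeomorphism rel boundary. Hence \(W\cong W'\), and a fortiori their interiors are homeomorphic.

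For (ii), I would fix one homology sphere \(\Sigma^{N-1}\) with nontrivial fundamental group and build infinitely many homology spheres \(\Sigma_k\) homotopy equivalent to it. The tool is the surgery exact sequence for \(\Sigma\): act by Wall realization with elements of \(L_{N}(\Z[\pi])\), \(\pi=\pi_1\Sigma\), which is infinite when \(N\) is even, or more generally using the multisignature and rho invariants. Concretely, for \(\pi\) a finite perfect group such as the binary icosahedral group, the normal invariants vanish on the realized elements. Then \(\Sigma_k\simeq\Sigma\) via a homotopy equivalence normally bordant to the identity, and the \(\rhot\)-invariant distinguishes the \(\Sigma_k\) pairwise up to homeomorphism and up to \(h\)-cobordism. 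Each \(\Sigma_k\) is a homology sphere of dimension \(N-1\ge5\), so Kervaire's theorem gives that it bounds a compact contractible smooth \(W_k\). I then need \(\Int W_k\simeqp\Int W\). My approach is to arrange that the realizing normal bordism \(V\) between \(\Sigma\) and \(\Sigma_k\) glues with \(W\) to produce a contractible manifold after surgery. Alternatively, I would argue directly that each interior is properly homotopy equivalent to the open cone-like model on \(\Sigma\): any homotopy equivalence \(\Sigma_k\to\Sigma\) extends over the contractible fillings, since both fillings are contractible and the obstruction to extending lies in the cohomology of \((W,\partial W)\) with coefficients in the homotopy of a contractible target, which vanishes.

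The main obstacle is the parity and dimension bookkeeping. I need \(\widetilde L\)-groups that are infinite and detected by \(\rhot\) in the relevant dimension \(N-1\), and these rho invariants must be homeomorphism invariants of the boundaries. This works for \(N-1\) odd, i.e.\ \(N\) even. For \(N\) odd I would instead use the \(\rho\)-invariant after crossing with suitable factors, or pick \(\pi\) with torsion allowing \(\Wh(\pi)\)- or \(UNil\)-type infinitude. I expect this route needs \(N-1\ge8\), which accounts for the missing cases \(N=5,7\). The remaining step is to verify that the \(h\)-cobordism ambiguity at infinity does not collapse these classes; I would handle it by using that \(\rhot\) is an \(h\)-cobordism invariant for finite \(\pi\).
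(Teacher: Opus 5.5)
Part~(i) and your boundary--interior dictionary match the paper's argument; Freedman's uniqueness of contractible topological fillings of a homology $3$-sphere is a legitimate substitute for Boyer's classification. Part~(ii), however, has genuine gaps.

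\textbf{Even dimensions.} Your proposed group fails when $N\equiv2\pmod4$. In $\SL(2,5)$ every conjugacy class is real, because $-1$ is a square mod $5$. So all characters are real-valued, $R^-_{\Q}(\SL(2,5))=0$, and $L^s_{4k+2}(\Z[\SL(2,5)])$ is finite. For $N=6,10,\dots$, Wall realization therefore gives no infinite family detected by the rho invariant, contrary to your claim that $L_N(\Z\pi)$ is infinite for every even $N$. (It does work for $N\equiv0\pmod4$, since the trivial character is nonzero in $R^+_{\Q}(\pi)/\Q\operatorname{reg}_\pi$.)

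What you need is a finite \emph{superperfect} group with $\widehat R^{(-1)^d}_{\Q}(\pi)\neq0$; mere perfectness does not suffice for Kervaire's realization theorem. For $d$ odd this requires a non-self-conjugate irreducible character, which is why the paper uses $\SL(2,7)$ and $\chi_3-\overline\chi_3$.

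You also gloss over markings. An $h$-cobordism between $X_k$ and $X_\ell$ only gives $\rho(X_k,\lambda_k)=\pm\alpha^*\rho(X_\ell,\lambda_\ell)$ for some $\alpha\in\Aut(\pi)$. You must therefore pass to an infinite subfamily meeting each $\Aut(\pi)\times\{\pm1\}$-orbit at most once. Finally, drop the unsupported claim that the $h$-cobordism at infinity is inertial; only the nonexistence of $h$-cobordisms is used.

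\textbf{Odd dimensions.} This case is not proved at all. For odd $N$ the boundaries are even-dimensional, where the Atiyah--Singer rho invariant is unavailable. For finite $\pi$ the groups $L^s_N(\Z\pi)$ are torsion, so no finite fundamental group yields a rationally detected family. Your alternatives do not supply an invariant either:
\begin{itemize}
\item Crossing the manifolds with extra factors takes you out of the class of homology spheres, which boundaries of contractible manifolds must be.
\item Variation through $\Wh(\pi)$ is realized by $h$-cobordisms, so it cannot separate $h$-cobordism classes.
\item $\mathrm{UNil}$ is $2$-primary torsion, and you offer no invariant that detects it.
\end{itemize}

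The paper supplies three missing ingredients:
\begin{itemize}
\item An \emph{infinite} superperfect group with finite $\Out$, namely $\Gamma\times\SL(2,7)$ with $\Gamma$ the fundamental group of a hyperbolic homology $3$-sphere. For it, an equivariant Chern-character computation gives $\cR_N(\pi)\neq0$.
\item An antisimple Kervaire boundary. Handles of index at most $3$ force $P_{m/2}=0$ once $m\geq8$, and this is the actual source of the bound $N\geq9$.
\item Weinberger's absolute higher rho invariant, which is a topological $h$-cobordism invariant, natural under $\Out(\pi)$, and varies affinely under Wall realization.
\end{itemize}
Your guess that the odd case needs $N-1\geq8$ is correct, but without these ingredients it has no justification.
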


The organizing idea is a boundary--interior dictionary.  If \(C\) is
compact and contractible with connected boundary, then
\[
\Int(C)\simeqp c^\circ(\partial C).
\]
For compact contractible manifolds \(C_0\) and \(C_1\), this gives
\[
\Int(C_0)\simeqp\Int(C_1)
\quad\Longleftrightarrow\quad
\partial C_0\simeq\partial C_1.
\]
In the other direction, a homeomorphism of interiors forces the
boundaries to be topologically \(h\)-cobordant.  These facts yield the
following reusable principle, which is the common geometric step in
all of the nonrigidity constructions.

\begin{theorem}[Boundary-to-interior nonrigidity principle]
\label{thm:boundary-to-interior-transfer}
Let \(N\geq2\), and let \(\{C_j\}_{j\in J}\) be compact contractible
topological \(N\)-manifolds with connected boundaries.  Suppose that
the boundaries \(\partial C_j\) are mutually homotopy equivalent and
pairwise not topologically \(h\)-cobordant.  Then the interiors
\(\Int(C_j)\) are mutually properly homotopy equivalent and pairwise
nonhomeomorphic.
\end{theorem}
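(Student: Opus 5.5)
The plan is to prove the two conclusions separately, using the boundary--interior dictionary described above.

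\emph{Proper homotopy equivalence.} By the collar theorem for topological manifolds (Brown), $\partial C_j$ has a collar, so $\Int(C_j)$ is homeomorphic to $C_j\cup_{\partial C_j}\partial C_j\times[0,\infty)$. Since $C_j$ is contractible, the inclusion $\partial C_j\to C_j$ is homotopic to a constant map. This gives a proper homotopy equivalence $\Int(C_j)\simeqp c^\circ(\partial C_j)$, the open cone $\partial C_j\times[0,\infty)/\partial C_j\times\{0\}$. Concretely, the compact core $C_j$ is collapsed to the cone point, and on the collar the levels $t$ are matched. The contracting homotopy of $C_j$ is used on the compact part. On $\partial C_j\times[0,1]$ one interpolates between the contraction and the identity, so each homotopy moves points only within a compact region and remains proper. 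A homotopy equivalence $f\colon\partial C_i\to\partial C_k$, with homotopy inverse $g$ and the associated homotopies, induces level-preserving cone maps $c^\circ f$ and $c^\circ g$. The cone on a homotopy, taken levelwise, is a proper homotopy. Hence $c^\circ(\partial C_i)\simeqp c^\circ(\partial C_k)$, and composing these equivalences gives the first claim.

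\emph{Nonhomeomorphism.} Suppose $h\colon\Int(C_i)\to\Int(C_k)$ is a homeomorphism with $i\neq k$. Choose a collar level $V_k=\partial C_k\times[s,\infty)$ deep in the end of $\Int(C_k)$. Then choose collar levels $V_i=\partial C_i\times[r,\infty)$ and $V_i'=\partial C_i\times[r',\infty)$ with $r<r'$, such that
\[
h(V_i')\subset \Int V_k\subset V_k\subset h(\Int V_i).
\]
This is possible because $h$ is proper and compact sets are exhausted by the complements of collar levels. Let $\Sigma_k=\partial C_k\times\{s\}$. Then $W=V_i\setminus \Int h^{-1}(V_k)$ is a compact cobordism from $\partial C_i\times\{r\}$ to $h^{-1}(\Sigma_k)\cong\partial C_k$. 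Since $h^{-1}(\Sigma_k)$ is bicollared, $W$ is a topological manifold with boundary.

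The main step is to show that $W$ is an $h$-cobordism. I would use the standard interleaving argument. Write $P=V_i\setminus\Int V_i'$, a product cobordism, and split it as $P=W\cup W'$ with $W'=h^{-1}(V_k)\setminus\Int V_i'$. Similarly, $h(W')$ together with the region between $h(\partial V_i')$ and $\Sigma_k$ in the product collar of $\partial C_k$ assembles, after a further level is chosen, into a product.

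The inclusion $\partial C_i\times\{r\}\to W$ then has a left homotopy inverse, obtained from the deformation retraction of $P$ onto its bottom restricted to $W$. In the other direction, the inclusion of $W$ into the product region of the $C_k$ collar composed with the retraction gives a right-inverse-type factorization. A two-out-of-six argument on the chain
\[
\partial C_i\to W\to P\to W\cup(\text{next region})
\]
shows that both boundary inclusions of $W$ are homotopy equivalences. Alternatively, it suffices to note that $\partial C_i$ and $\partial C_k$ are homotopy equivalent closed manifolds and that the inclusions induce split injections on all homotopy and homology groups with compatible $\pi_1$. This yields a topological $h$-cobordism between $\partial C_i$ and $\partial C_k$, contradicting the hypothesis.

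I expect the interleaving step that upgrades $W$ from a cobordism with retractions to a genuine $h$-cobordism to be the main technical point. I would handle it with the two-out-of-six property, which requires composites of inclusions in a three-level interleaving $V_i\supset h^{-1}V_k\supset V_i'\supset h^{-1}V_k'$, each of whose two-step composites is a homotopy equivalence. From this, every single inclusion, and in particular $\partial C_i\hookrightarrow W$ and $\partial C_k\hookrightarrow W$, is a homotopy equivalence.
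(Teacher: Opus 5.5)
This is essentially the paper's own argument. Properness comes from comparing each interior with the open cone on its boundary and coning off a boundary homotopy equivalence (\cref{lem:interior-cone,lem:cone-map}), and nonhomeomorphism comes from the alternating collar tails $V_i\supset h^{-1}V_k\supset V_i'\supset h^{-1}V_k'$ plus two-out-of-six, exactly as in the proof of \cref{prop:completion}.

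The one step you leave implicit is getting from the tail inclusions to the boundary inclusions of $W$. Since $V_i=W\cup h^{-1}V_k$ and the product tail $h^{-1}V_k$ collapses onto $h^{-1}(\Sigma_k)$, the inclusion $W\hookrightarrow V_i$ is a homotopy equivalence, and both boundary inclusions then follow by two-out-of-three. Your ``split injections'' alternative would not suffice by itself.
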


The high-dimensional constructions supplying the boundary families
are summarized by the next two results.

\begin{maintheorem}[Even-dimensional nonrigidity]
\label{thm:even-main}
For every even integer \(N\geq6\), there exists an infinite family
\(\{C_j\}_{j\in J}\) of compact contractible smooth \(N\)-manifolds
whose interiors are mutually properly homotopy equivalent and pairwise
nonhomeomorphic.  Their boundaries may be chosen to be homotopy
equivalent integral homology \((N-1)\)-spheres that are pairwise not
topologically \(h\)-cobordant.
\end{maintheorem}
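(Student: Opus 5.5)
Set \(n=N-1\), which is odd and at least \(5\). Using \cref{thm:boundary-to-interior-transfer}, it suffices to produce infinitely many smooth integral homology \(n\)-spheres \(\Sigma_j\) that are mutually homotopy equivalent, pairwise not topologically \(h\)-cobordant, and each bounding a compact contractible smooth \(N\)-manifold. The bounding step is standard: for \(n\geq5\), Kervaire's theorem says every smooth homology \(n\)-sphere bounds a compact contractible smooth manifold. (One can also see this by doing surgery on an acyclic framed null-bordism and then killing \(\pi_1\) by attaching \(2\)- and \(3\)-handles, using that the fundamental group is perfect.) Once the \(\Sigma_j\) are in hand, the theorem follows directly from \cref{thm:boundary-to-interior-transfer}.

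To build the \(\Sigma_j\), I would fix a finitely presented perfect group \(\pi\) with \(H_2(\pi)=0\), which is realized as \(\pi_1\) of a smooth homology \(n\)-sphere \(\Sigma_0\). A concrete choice is a superperfect group containing torsion whose representation theory is rich, such as the binary icosahedral group \(\widetilde{A}_5\subset\SL_2(\C)\) when dimensions allow; it acts freely on \(S^3\), so one can take spherical space form pieces and spin them up to dimension \(n\). I would then apply Wall realization in the surgery exact sequence
\[
L_{n+1}^s(\Z\pi)\longrightarrow \cS^s_{\TOP}(\Sigma_0)\longrightarrow \cN(\Sigma_0).
\]
Since \(n+1=N\) is even, the group \(L_{N}(\Z\pi)\) contains multisignature elements of infinite rank whenever \(\pi\) has nontrivial finite quotients with nonreal characters. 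Acting on \(\id_{\Sigma_0}\) by these elements yields normal bordisms \(W_j\) from \(\Sigma_0\) to manifolds \(\Sigma_j\), together with simple homotopy equivalences \(\Sigma_j\to\Sigma_0\). The \(\Sigma_j\) are then homology spheres homotopy equivalent to \(\Sigma_0\), and they are smooth: the realization can be done smoothly because the bordism is built from handles attached to \(\Sigma_0\times I\).

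The main obstacle is proving that the \(\Sigma_j\) are pairwise not topologically \(h\)-cobordant. Different elements of \(\cS\) are not enough, since the structure set counts structures up to \(h\)-cobordism only relative to the homotopy equivalence, and self-homotopy equivalences can identify them. I would attack this with the \(\rho\)-invariant, presumably the \(\rhot\) and \(\rhoabs\) the paper introduces. For a representation \(\alpha\colon\pi\to U(k)\) that factors through a finite quotient, \(\rho_\alpha\) is a homeomorphism invariant and is additive under Wall realization: \(\rho_\alpha(\Sigma_j)-\rho_\alpha(\Sigma_0)\) equals the multisignature of the realized \(L\)-class. Moreover \(\rho_\alpha\) is unchanged under \(h\)-cobordism, because the reduced \(\rho\)-invariant vanishes for an \(h\)-cobordism, whose intersection form is trivial. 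To absorb the ambiguity coming from \(\Out(\pi)\) and the choice of identification of fundamental groups, I would use an invariant that is symmetric under \(\Aut(\pi)\), such as \(\rhoabs=\sum_\alpha|\rho_\alpha|\) summed over an \(\Aut(\pi)\)-orbit of representations. Choosing the \(L\)-elements so that these values grow strictly with \(j\) gives infinitely many distinct \(h\)-cobordism classes. The delicate points are checking that the chosen multisignature classes really lie in the image of Wall realization and are not killed by the normal invariant map, and keeping the \(\rho\)-invariant topological rather than only smooth; the latter uses the fact that \(\rho\) for finite covers is a topological invariant by bordism arguments.
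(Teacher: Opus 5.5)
Your architecture matches the paper's. You take a Kervaire-realized base homology \((N-1)\)-sphere with finite superperfect \(\pi\), Wall-realize the multiples \(ka\) of an infinite-order class \(a\in L_N^s(\Z\pi)\), and use the variation \(\rho(X_k)=\rho(X)+k\,\widehat\sigma(a)\). Then come \(h\)-cobordism invariance of \(\rho\) up to \(\Aut(\pi)\times\{\pm1\}\), Kervaire filling, and \cref{thm:boundary-to-interior-transfer}. Your symmetrized sum \(\sum_\alpha|\rho_\alpha|\) over an \(\Aut(\pi)\)-orbit is a workable substitute for the paper's choice of \(J\) meeting each finite orbit at most once. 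It is not the paper's \(\rhoabs\), which is Weinberger's invariant and is used only for odd \(N\).

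\textbf{The gap: the group.} Write \(N=2d\). Then \(L_N^s(\Z\pi)\otimes\Q\cong R^{(-1)^d}_\Q(\pi)\), and the boundary rho invariant lives in \(\widehat R^{(-1)^d}_\Q(\pi)\). For \(d\) odd, i.e.\ \(N\equiv2\pmod4\) (including \(N=6\)), this space is \(R^-_\Q(\pi)\). It is nonzero only if \(\pi\) has a non-self-conjugate irreducible character. Your concrete candidate, the binary icosahedral group \(\SL(2,5)\), has only real-valued characters: \(A_5\) is ambivalent, and the faithful characters take values in \(\Q(\sqrt5)\subset\mathbb R\). Hence \(L_N^s(\Z[\SL(2,5)])\) is torsion and the rho invariant is identically zero for \(N\equiv2\pmod4\). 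There is no infinite-order class to realize and nothing to detect one. As written, the proposal proves nothing in dimensions \(6,10,14,\dots\), and \emph{when dimensions allow} does not replace exhibiting a group that works.

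What you need is a finite superperfect group with a complex-type irreducible representation. The paper takes \(\SL(2,7)\), whose inflated degree-three characters have values \((-1\pm\sqrt{-7})/2\), so \(0\neq\chi_3-\overline\chi_3\in R^-_\Q(\pi)\). Your stated criterion is also misaligned in the other congruence class. For \(N\equiv0\pmod4\) no nonreal character is needed: the trivial character gives a nonzero class in \(R^+_\Q(\pi)/\Q\operatorname{reg}_\pi\) for every nontrivial finite \(\pi\). This is why the single group \(\SL(2,7)\) covers all even \(N\geq6\).

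\textbf{Two smaller corrections.}

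First, Kervaire's filling theorem does not say every smooth homology \(n\)-sphere bounds a compact contractible manifold. An exotic \(7\)-sphere bounds none, since removing a ball would give a simply connected \(h\)-cobordism to \(S^7\). Also, a homology sphere need not be framed null-bordant, so your parenthetical sketch fails too. What is true is that \(\Sigma\#\Theta\) bounds for a unique homotopy sphere \(\Theta\). This suffices only because \(\Sigma\#\Theta\cong_{\TOP}\Sigma\) by the generalized Poincar\'e theorem, so homotopy type and topological \(h\)-cobordism class are unchanged; you need to say this explicitly.

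Second, your worry about classes being ``killed by the normal invariant map'' is not an issue. Wall realization is defined on all of \(L_N^s(\Z\pi)\), and the rho-variation formula alone shows the \(X_k\) have pairwise distinct invariants. So no computation of the stabilizer \(\im\theta\) is needed, which is exactly why the paper's general criterion never computes normal invariants.
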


\begin{maintheorem}[Odd-dimensional nonrigidity]
\label{thm:odd-main}
For every odd integer \(N\geq9\), there exists an infinite family of
compact contractible smooth \(N\)-manifolds \(\{C_j\}_{j\in J}\) such
that the interiors \(M_j=\Int(C_j)\) are all properly homotopy
equivalent but pairwise nonhomeomorphic.  Their boundaries may be
chosen to be homotopy equivalent integral homology \((N-1)\)-spheres
that are pairwise not topologically \(h\)-cobordant.
\end{maintheorem}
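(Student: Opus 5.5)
The statement to prove is \cref{thm:odd-main}. Its proof reduces, via \cref{thm:boundary-to-interior-transfer}, to producing, for each odd \(N\geq 9\), an infinite family of smooth integral homology \((N-1)\)-spheres \(\Sigma_j\) with three properties:
\begin{enumerate}[label=\textup{(\alph*)}]
\item the \(\Sigma_j\) are mutually homotopy equivalent;
\item each \(\Sigma_j\) bounds a compact contractible smooth \(N\)-manifold \(C_j\);
\item the \(\Sigma_j\) are pairwise not topologically \(h\)-cobordant.
\end{enumerate}
Here \(n=N-1\geq 8\) is even. I will build the \(\Sigma_j\) as homology spheres with a common perfect fundamental group \(\pi\) and a common homotopy type, distinguished by a \(\rho\)-type invariant. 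For (b) I would appeal to Kervaire's theorem: every smooth homology \(n\)-sphere with \(n\geq 5\) bounds a compact contractible smooth manifold. This requires only \(H_*(\Sigma;\Z)\cong H_*(S^n;\Z)\) and is independent of \(\pi\). Property (b) is therefore automatic once the \(\Sigma_j\) are smooth homology spheres.

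\textbf{Step 1: the base homology sphere.} Fix a finite perfect group \(\pi\) with nontrivial torsion-free part of \(\widetilde L\)-groups, for instance \(\pi\) a binary icosahedral group or \(\SL_2(\mathbb F_p)\). By Kervaire's construction, or by plus-construction and surgery arguments, there is a smooth homology \(n\)-sphere \(\Sigma_0\) with \(\pi_1\Sigma_0\cong\pi\).

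\textbf{Step 2: the infinite family.} Use Wall realization in the simple (or \(h\)-) structure set \(\cS^h(\Sigma_0)\), which is valid since \(n\geq 5\). The group \(\widetilde L^h_{n+1}(\Z\pi)\) acts on the structure set. For \(n+1=N\) odd, \(L_{odd}\) of a finite group is essentially torsion, which is unhelpful. So instead I would take \(\rho\)-invariant detection through \(L^h_{n+1}\) in the complementary parity, or switch the roles: realize elements of \(\widetilde L_{n+1}\) when \(n+1\equiv 0 \pmod 2\). Since \(n\) is even here, the natural route is to act on \(\Sigma_0\times\) something, or to take \(\pi\) infinite-perfect, or to use \(L_{n+1}\) with \(n+1\) odd via the \(\rho\)-invariant of the boundary of the realized normal bordism.

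Concretely, I would instead use the multisignature/\(\rho\)-invariant on odd-dimensional manifolds. Since \(n\) is even, I would apply the construction one dimension down: take a homology \((n-1)\)-sphere \(P\) with finite perfect \(\pi\), realize infinitely many elements of \(\widetilde L_n(\Z\pi)\), which is free abelian of positive rank via multisignature for \(n\) even, to get homotopy equivalent \(P_j\) distinguished by \(\rhot\). Then set \(\Sigma_j=\partial(P_j\times I)\) or a spin/suspension-type doubling, arranged to remain homology spheres with the same homotopy type. The bound \(N\geq 9\) gives \(n-1\geq 7\), which leaves the room needed for these surgeries.

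\textbf{Step 3: the \(h\)-cobordism obstruction.} The \(\rho\)-invariant of a closed odd manifold with finite fundamental group is a topological \(h\)-cobordism invariant, modulo the \(L\)-theoretic indeterminacy. I would show that the invariants \(\rhot(\Sigma_j)\) are pairwise distinct, so no two \(\Sigma_j\) are \(h\)-cobordant. Homotopy equivalence comes from the structure-set description in Step 2. Applying \cref{thm:boundary-to-interior-transfer} then finishes the proof.

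The main obstacle is Step 2/3: passing the \(\rho\)-detection through the dimension shift so that the \(n\)-dimensional boundaries are still homology spheres, are still homotopy equivalent, and still carry distinct invariants. I expect this is exactly why \(N=5,7\) remain open.
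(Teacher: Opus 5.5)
There is a genuine gap, and it lies in Step~1, before the dimension shift you flag as the main obstacle. Because \(N\) is odd, the boundaries \(\Sigma_j\) have even dimension \(n=N-1\). With a \emph{finite} fundamental group \(\pi\), a single homotopy type of such homology spheres contains only finitely many topological \(h\)-cobordism classes. To see this, consider the topological surgery sequence
\[
L^h_{n+1}(\Z\pi)\longrightarrow\cS^h_{\TOP}(\Sigma_0)\longrightarrow[\Sigma_0,G/\TOP]\longrightarrow L^h_n(\Z\pi).
\]
The odd \(L\)-group \(L^h_{n+1}(\Z\pi)\) of a finite group is finite. Since \(\Sigma_0\to S^n\) is an integral homology equivalence, \([\Sigma_0,G/\TOP]\cong L_n(\Z)\). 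When \(n\equiv0\pmod4\), this \(\Z\) is detected by the simply connected signature, so the surgery obstruction map has finite kernel. Hence \(\cS^h_{\TOP}(\Sigma_0)\) is finite. Any family of mutually homotopy equivalent \(\Sigma_j\) with finite \(\pi_1\), however it is produced, therefore falls into finitely many \(h\)-cobordism classes. Your own remark that odd \(L\)-groups of finite groups are torsion is exactly this obstruction, and no detour through lower-dimensional \(P_j\) can evade it.

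The concrete shift you propose also fails on its own terms. \(\partial(P_j\times I)=P_j\sqcup -P_j\) is disconnected, and \(P_j\times S^1\) has \(H_1\cong\Z\). Step~3 then applies an odd-dimensional Atiyah--Singer \(\rho\)-invariant to the even-dimensional \(\Sigma_j\), where it is not defined.

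The paper supplies the missing ideas as follows.
\begin{enumerate}
\item It takes an \emph{infinite} group with rationally nonzero odd \(L\)-theory: \(\pi=\Gamma\times\SL(2,7)\), with \(\Gamma\) the fundamental group of a closed hyperbolic integral homology \(3\)-sphere. The equivariant Chern character then gives \(L^N(\Z\pi)\otimes\Q\cong H_3(B\Gamma;\Q)\otimes L^{N-3}(\Z[\SL(2,7)])\otimes\Q\). Bordism symmetric signatures contribute at most \(H_3(B\Gamma;\Q)\otimes\Q\operatorname{reg}\), so the reduced target \(\cR_N(\pi)\) is nonzero.
\item It replaces the classical \(\rho\)-invariant by Weinberger's absolute higher \(\rho\)-invariant. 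This is defined for even-dimensional \emph{antisimple} manifolds, is a topological \(h\)-cobordism invariant, and shifts by \(\operatorname{sym}_\pi(b)\) under Wall realization of \(b\in L_N^s(\Z\pi)\).
\item It gets antisimplicity from Kervaire's handlebody, which has handles only in indices \(\le3\). For \(m=N-1\ge8\) this leaves degree \(m/2\) empty. This, not lack of room for a dimension shift, is what fails in dimension seven.
\item It uses finiteness of \(\Out(\pi)\), via Mostow rigidity, to pass from infinitely many invariant values to infinitely many \(h\)-cobordism classes.
\end{enumerate}

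A smaller correction to (b): Kervaire's filling theorem only makes \(\Sigma\#\Theta\) bound a contractible manifold, for a suitable homotopy sphere \(\Theta\). This is harmless because \(\Sigma\#\Theta\cong_{\TOP}\Sigma\), but it should be stated that way.
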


The low-dimensional part of \cref{thm:main-classification} follows
from the same boundary dictionary.  In dimension three one uses the
Poincar\'e and generalized Schoenflies theorems.  In dimension four,
rigidity of integral homology three-spheres and Boyer's classification
of simply connected topological four-manifolds with prescribed
boundary identify the compact fillings~\cite{BoyerBoundary}.

For even \(N=2d\), a general representation-theoretic criterion turns
a nonzero reduced multisignature direction in \(L_N^s(\Z\pi)\) into an
infinite affine family of rho invariants.  Kervaire realization and
filling then supply the required boundaries and interiors.  The single
finite superperfect group \(\pi=\SL(2,7)\) satisfies the criterion in
every even dimension \(N\geq6\).

The odd-dimensional argument is organized around a second reusable
criterion involving finite \(\Out(\pi)\), antisimple realization, and
Weinberger's absolute higher rho invariant.  For odd \(N\geq9\), we
verify it for
\[
\pi=\Gamma\times\SL(2,7),
\]
where \(\Gamma\) is the fundamental group of a closed hyperbolic
integral homology three-sphere.  An equivariant Chern-character
calculation moves a finite-group multisignature direction into the
required odd-dimensional \(L\)-group, and the absolute higher rho
invariant separates infinitely many topological \(h\)-cobordism
classes~\cite{WeinbergerRho,DFW}.

The paper is organized as follows.  We first review proper homotopy and
prove the boundary--interior transfer principle, then establish
rigidity in dimensions three and four.  A detailed six-dimensional
prototype is followed by the general even-dimensional criterion.  We
next prove and verify the odd-dimensional criterion, explain the
remaining obstruction in dimension seven, and conclude with the two
open problems in dimension five.

\section{Proper homotopy and topology at infinity}
\label{sec:proper-preliminaries}

\subsection{Proper homotopy equivalence}

A continuous map
\[
f\colon X\longrightarrow Y
\]
between locally compact Hausdorff spaces is called \emph{proper} if
\(f^{-1}(K)\) is compact for every compact subset \(K\subseteq Y\).

A proper map \(f\colon X\to Y\) is a \emph{proper homotopy
equivalence} if there is a proper map \(g\colon Y\to X\) and proper
homotopies
\[
g\circ f\simeq_{\mathrm{prop}}\operatorname{id}_X,
\qquad
f\circ g\simeq_{\mathrm{prop}}\operatorname{id}_Y.
\]

By forgetting the properness condition, every proper homotopy
equivalence is an ordinary homotopy equivalence.  The converse is
false in general: ordinary homotopies can move points from arbitrarily
far out toward a fixed compact subset and therefore need not preserve
the topology at infinity.

\subsection{The fundamental group at infinity}

Let \(X\) be a one-ended, locally compact, locally path-connected
space, and choose a nested cofinal sequence of connected neighborhoods
of infinity
\[
U_0\supseteq U_1\supseteq U_2\supseteq\cdots
\]
together with an appropriate proper base ray.  The fundamental group
at infinity is represented by the inverse sequence
\[
\pi_1(U_0)
\longleftarrow
\pi_1(U_1)
\longleftarrow
\pi_1(U_2)
\longleftarrow\cdots,
\]
considered up to pro-isomorphism.

The space \(X\) is called \emph{simply connected at infinity} if this
inverse sequence is pro-trivial.  Equivalently, for every compact set
\(K\subseteq X\), there is a compact set \(L\supseteq K\) such that
every loop in \(X-L\) is null-homotopic in \(X-K\).

Proper homotopy equivalences preserve the number of ends.  They also
preserve the pro-isomorphism class of the fundamental group at
infinity; see \cite{GuilbaultEnds}.  Consequently, simple connectivity
at infinity is a proper homotopy invariant.

\subsection{Rigidity of the Euclidean proper homotopy type}

We record the classical Euclidean rigidity consequence that motivates
the main question of this paper.

\begin{theorem}[{\cite[Theorem~4]{StallingsEuclidean}}]
\label{thm:stallings-euclidean}
Let \(n\geq5\), and let \(M^n\) be a contractible smooth open
\(n\)-manifold.  Then \(M\) is PL-homeomorphic, and hence
topologically homeomorphic, to \(\mathbb R^n\) if and only if \(M\) is
simply connected at infinity.
\end{theorem}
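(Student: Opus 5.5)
This is Stallings' theorem, cited from \cite[Theorem~4]{StallingsEuclidean}. The plan is therefore to reconstruct his engulfing argument rather than to derive it from earlier results in the paper.

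\emph{Necessity.} This direction is immediate. If \(M\cong\mathbb R^n\), then \(M\) is properly homotopy equivalent to \(\mathbb R^n\). By the invariance recorded above, \(M\) inherits simple connectivity at infinity from \(\mathbb R^n\), whose neighborhoods of infinity \(\mathbb R^n-D^n\simeq S^{n-1}\) are simply connected for \(n\geq3\).

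\emph{Sufficiency.} I would show that every compact subset \(K\subseteq M\) lies in the interior of a PL \(n\)-ball in \(M\). Granting this, write \(M\) as an increasing union of balls \(B_1\subseteq\Int B_2\subseteq\cdots\), which is possible because \(M\) is exhausted by compacta. Each difference \(B_{i+1}-\Int B_i\) is then an annulus \(S^{n-1}\times I\) by the PL annulus theorem in dimensions \(n\geq5\); alternatively, Brown's monotone union theorem applies directly. Hence \(M\cong_{\PL}\mathbb R^n\), and the topological statement follows.

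To place \(K\) in a ball, I would triangulate \(M\) using its smooth structure and enlarge \(K\) to a compact polyhedron. Next, I would choose a \(2\)-dimensional subcomplex \(P\) (a spine-type skeleton) of a regular neighborhood of \(K\) such that the whole neighborhood collapses to \(P\) together with a dual \((n-3)\)-dimensional complex \(Q\); this is the standard splitting of a triangulation into a \(k\)-skeleton and its dual \((n-k-1)\)-skeleton. The plan then has two steps.
\begin{enumerate}
\item Since \(M\) is contractible, \(P\) is null-homotopic in \(M\). Engulfing for a \(2\)-complex in an \((n\geq5)\)-manifold with \(\pi_1=\pi_2=0\) (general position: \(2+2<n\)) then gives a PL ball containing \(P\).
\item Simple connectivity at infinity is used to engulf the dual complex \(Q\) from infinity. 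I would choose a compact \(L\supseteq K\) so that loops in \(M-L\) die in \(M-K\); together with contractibility, this makes \(M-K\) behave like a simply connected region near infinity. Then I would apply Stallings' engulfing theorem "from the end": \(Q\) of dimension \(n-3\leq n-3\) can be pushed off into the complement of any compact set, that is, into an open set \(U=M-C\) containing a neighborhood of infinity, via an ambient isotopy.
\end{enumerate}
Combining the two steps, the "swallowing" argument of Stallings produces an ambient isotopy \(h\) with \(h(\text{ball}\supseteq P)\cup U=M\) and \(K\subseteq\) a ball. The standard ``\(M = N(P)\cup N(Q)\)'' trick then shows \(K\) is contained in the image of a ball under an isotopy.

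The main obstacle is step (2): it is the connectivity hypothesis for engulfing. The relevant dimension count is \(\dim Q\leq n-3\), and the requirement is that the pair \((M,U)\) be \((n-3)\)-connected. Contractibility together with simple connectivity at infinity gives this via Lefschetz duality. \(H_*(M,M-K)\cong H^{n-*}(K)\) vanishes in the needed range after thickening \(K\) to a contractible-in-\(M\) neighborhood. Simple connectivity at infinity supplies the \(\pi_1\) condition that the Hurewicz theorem needs. This is exactly where \(n\geq5\) is used: it ensures \(2\leq n-3\), so both engulfings apply. For the details I would defer to \cite{StallingsEuclidean}.
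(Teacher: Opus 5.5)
The paper gives no proof of this statement; it is quoted from Stallings. Your overall architecture is the right one: show every compact set lies in a PL ball, so \(M\) is a monotone union of PL balls, and handle a compact polyhedral neighborhood by splitting it into a \(2\)-skeleton \(P\) and a dual \((n-3)\)-skeleton \(Q\), engulfing one by a ball and the other from infinity. However, the step you flag as the main obstacle is justified incorrectly, and as written it fails.

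Having put \(P\) in the ball, you must engulf the \((n-3)\)-complex \(Q\) by a neighborhood of infinity \(U=M-D\). That needs \((M,U)\) to be \((n-3)\)-connected, i.e.\ \(U\) to be \((n-4)\)-connected. Your two inputs do not give this for any single \(U\).
\begin{itemize}
\item Duality gives \(\widetilde H_i(M-D)\cong\check H^{\,n-1-i}(D)\), and making \(D\) contractible in \(M\) does not kill \(\check H^{j}(D)\) for \(j\geq3\). A regular neighborhood \(S^3\times D^{n-3}\) of an \(S^3\) inside a small ball is contractible in \(M\), yet its complement has \(\widetilde H_{n-4}\cong\Z\).
\item What contractibility of \(M\) actually gives is that \(\widetilde H_i(M-L)\to\widetilde H_i(M-D)\) vanishes for \(i\leq n-2\) when \(D\) contracts inside \(L\). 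This is a statement about the inverse system of neighborhoods of infinity, not about any one of them.
\item The same holds for the hypothesis itself. Simple connectivity at infinity says loops in \(M-L\) die in \(M-D\); it does not make any particular \(M-D\) simply connected, so the Hurewicz theorem cannot be applied to a fixed \(U\).
\end{itemize}
So the claim that \((M,U)\) is \((n-3)\)-connected is not established.

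One repair is a pro-Hurewicz argument: nested \(D=L_0\subset L_1\subset\cdots\) with \(\pi_i(M-L_{k+1})\to\pi_i(M-L_k)\) trivial for \(i\leq n-4\), plus an engulfing induction that descends one level per skeleton.

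It is much simpler to swap the roles of \(P\) and \(Q\). Engulf \(Q^{n-3}\) by a ball; this only needs \((M,\text{ball})\) to be \((n-3)\)-connected, which follows from contractibility. Push \(P^2\) off to infinity; this only needs \((M,U)\) to be \(2\)-connected, i.e.\ \(U\) connected and simply connected. The nested form of your hypothesis suffices here:
\begin{itemize}
\item First push the \(1\)-skeleton into \(M-L\), with \(L\) chosen so that loops in \(M-L\) die in \(M-D\).
\item Each boundary circle of a \(2\)-cell then bounds a disk in the image of \(M-D\).
\item Since \(\pi_2(M)=0\), that disk yields a homotopy of the \(2\)-cell rel boundary into that image.
\end{itemize}
No duality or Hurewicz argument is needed, and \(n\geq5\) enters exactly as \(2\leq n-3\).

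Two smaller points. The structure you want is that the neighborhood lies in the join \(P*Q\), so the part away from \(Q\) pushes radially into any neighborhood of \(P\); it does not collapse onto \(P\cup Q\). You must also arrange that the engulfing isotopies keep your compact set inside the region where this radial push is defined.
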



\begin{corollary}[Proper homotopy rigidity of Euclidean space]
\label{cor:proper-rigidity-euclidean}
Let \(n\geq5\), and let \(M^n\) be a smooth open \(n\)-manifold.  If
\[
M\simeq_{\mathrm{prop}}\mathbb R^n,
\]
then
\[
M\cong_{\TOP}\mathbb R^n.
\]
\end{corollary}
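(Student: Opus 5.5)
The plan is to reduce the statement to \cref{thm:stallings-euclidean}. Two things are needed: that \(M\) is contractible, and that \(M\) is simply connected at infinity. Both are obtained by transporting the corresponding properties of \(\mathbb R^n\) across the proper homotopy equivalence.

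First I will forget properness. A proper homotopy equivalence \(f\colon M\to\mathbb R^n\) is in particular an ordinary homotopy equivalence. Since \(\mathbb R^n\) is contractible, so is \(M\). The hypothesis already makes \(M\) a smooth open \(n\)-manifold, so \(M\) is a contractible smooth open \(n\)-manifold with \(n\geq5\), which is exactly the setting of \cref{thm:stallings-euclidean}.

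Next I will show that \(M\) is simply connected at infinity.
\begin{itemize}
\item \(\mathbb R^n\) is one-ended for \(n\geq2\). The neighborhoods of infinity \(U_k=\{|x|>k\}\cong S^{n-1}\times(k,\infty)\) are connected and, for \(n\geq3\), simply connected. So the inverse sequence of fundamental groups is levelwise trivial and hence pro-trivial.
\item Proper homotopy equivalences preserve the number of ends, so \(M\) is one-ended.
\item By the invariance recalled above (citing \cite{GuilbaultEnds}), they also preserve the pro-isomorphism class of the fundamental group at infinity. Hence \(M\) is simply connected at infinity.
\end{itemize}
\cref{thm:stallings-euclidean} then yields a PL homeomorphism, and in particular a homeomorphism, \(M\cong\mathbb R^n\).

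The only step needing care is the transfer of simple connectivity at infinity. The subtle point is base rays: the pro-group depends on a choice of proper ray, and for a one-ended space this is well defined only up to the usual ambiguity. For \(\mathbb R^n\) with \(n\geq3\) every neighborhood \(U_k\) is simply connected, so there is no ambiguity. Pro-triviality is also detected without base points by the criterion stated earlier: for every compact \(K\) there is a compact \(L\supseteq K\) such that loops in \(M-L\) die in \(M-K\).

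If a direct argument is preferred over citing \cite{GuilbaultEnds}, I would prove this criterion for \(M\) by hand. Let \(f\colon M\to\mathbb R^n\) and \(g\colon\mathbb R^n\to M\) be proper homotopy inverses, with \(H\) a proper homotopy from \(g\circ f\) to \(\id_M\). Given compact \(K\subseteq M\), the preimage \(H^{-1}(K)\) is compact, so enlarging a ball far enough keeps the image of \(H\) on loops far out away from \(K\). Concretely:
\begin{enumerate}
\item Choose a ball \(B\) so that for any \(x\) with \(f(x)\notin B\), the whole track \(H(\{x\}\times[0,1])\) avoids \(K\), and so that \(g\) maps the complement of a larger ball \(B'\) into \(M-K\).
\item Choose \(L\) so that \(f(M-L)\subseteq\mathbb R^n-B'\), using properness of \(f\).
\item Given a loop \(\gamma\) in \(M-L\), \(f\circ\gamma\) lies in \(\mathbb R^n-B'\). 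Since \(\mathbb R^n-B'\) is simply connected, \(f\circ\gamma\) bounds a disk there.
\item Applying \(g\) to this disk gives a null-homotopy of \(g\circ f\circ\gamma\) in \(M-K\).
\item The homotopy \(H\) restricted to \(\gamma\) is a free homotopy in \(M-K\) from \(g\circ f\circ\gamma\) to \(\gamma\), so \(\gamma\) is null-homotopic in \(M-K\).
\end{enumerate}
This bookkeeping with compact sets is the main technical obstacle, though it is routine.
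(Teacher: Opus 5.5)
Your proof is correct and follows the paper's own argument: you forget properness to get contractibility, transfer simple connectivity at infinity via proper invariance of the pro-fundamental group (and of one-endedness), and then apply \cref{thm:stallings-euclidean}. Your optional hands-on check of the compact-set criterion is also sound and simply makes explicit what the paper takes from \cite{GuilbaultEnds}; it uses properness of \(f\), \(g\), and \(H\), together with the fact that a loop freely homotopic to a null-homotopic loop is itself null-homotopic.
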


\begin{proof}
A proper homotopy equivalence is, after forgetting properness, an
ordinary homotopy equivalence.  Since \(\mathbb R^n\) is contractible,
\(M\) is contractible.

Moreover, proper homotopy equivalence preserves the pro-fundamental
group at infinity.  Since \(\mathbb R^n\) is simply connected at
infinity, \(M\) is simply connected at infinity.  The conclusion now
follows from \cref{thm:stallings-euclidean}.
\end{proof}


\section{The boundary-to-interior transfer principle}

\subsection{Proper maps and open cones}

Let \(Y\) be a compact space.  Its open cone is
\[
c^\circ Y
=
Y\times[0,\infty)\big/
\bigl(Y\times\{0\}\bigr).
\]
We write the cone point as \(v\).

\begin{lemma}\label{lem:cone-map}
Let \(Y\) and \(Z\) be compact Hausdorff spaces.  If
\[
f\colon Y\longrightarrow Z
\]
is a homotopy equivalence, then
\[
c^\circ f\colon c^\circ Y\longrightarrow c^\circ Z,
\qquad
[y,t]\longmapsto[f(y),t],
\]
is a proper homotopy equivalence.
\end{lemma}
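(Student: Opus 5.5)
The plan is to apply the open-cone functor to the homotopy-inverse data directly. Choose a homotopy inverse \(g\colon Z\to Y\) and homotopies \(H\colon Y\times[0,1]\to Y\) from \(g\circ f\) to \(\id_Y\) and \(K\colon Z\times[0,1]\to Z\) from \(f\circ g\) to \(\id_Z\). Define \(c^\circ g\) by \([z,t]\mapsto[g(z),t]\). The candidate proper homotopies are
\[
\widetilde H\bigl([y,t],s\bigr)=[H(y,s),t],\qquad
\widetilde K\bigl([z,t],s\bigr)=[K(z,s),t].
\]
These satisfy \(c^\circ g\circ c^\circ f=c^\circ(g\circ f)\), and \(\widetilde H\) connects this to \(\id\); the same holds for \(\widetilde K\). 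So the whole argument reduces to two checks: these maps are well defined and continuous, and they are proper.

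\emph{Well-definedness and continuity.} Each map sends the collapsed set \(Y\times\{0\}\) (respectively \(Y\times\{0\}\times[0,1]\)) to the cone point, so it descends to the quotient. For \(c^\circ f\), continuity follows from the universal property of the quotient map \(Y\times[0,\infty)\to c^\circ Y\).

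For the homotopies I must also know that
\[
\bigl(Y\times[0,\infty)\times[0,1]\bigr)\big/\!\sim\;\longrightarrow\; c^\circ Y\times[0,1]
\]
is a homeomorphism, i.e.\ that the product of a quotient map with \([0,1]\) is again a quotient map. This holds because \([0,1]\) is locally compact Hausdorff (Whitehead's lemma). I expect this to be the main point needing care, though it is standard.

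A cleaner alternative is to argue via the metrizable-style description \(c^\circ Y\cong\) a subspace of the closed cone, using compactness of \(Y\) to see the quotient topology near \(v\) is generated by the sets \(\{[y,t]: t<\varepsilon\}\). This description is also what properness uses.

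\emph{Properness.} Consider the continuous "height" function \(h\colon c^\circ Y\to[0,\infty)\), \([y,t]\mapsto t\). Since \(Y\) is compact, every compact subset of \(c^\circ Y\) lies in some \(h^{-1}[0,R]\), which is the image of the compact set \(Y\times[0,R]\) and is therefore compact; the same holds for \(Z\).

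All the maps above preserve height. Hence the preimage of a compact set \(L\subseteq h^{-1}[0,R]\) is a closed subset of \(h^{-1}[0,R]\) (or of \(h^{-1}[0,R]\times[0,1]\)), and so it is compact. This proves properness of \(c^\circ f\), \(c^\circ g\), \(\widetilde H\) and \(\widetilde K\).

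Finally, Hausdorffness of the cones (needed for "locally compact Hausdorff") follows from compactness of \(Y\). A point \([y,t]\) with \(t>0\) is separated from \(v\) by the height sets \(\{h<t/2\}\) and \(\{h>t/2\}\). Points at positive height are separated using the open subspace \(Y\times(0,\infty)\), which embeds openly.
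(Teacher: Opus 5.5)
Your proposal is correct and follows the paper's argument: apply the open-cone construction to \(g\) and to the two homotopies, and deduce properness from the fact that every map involved preserves the radial coordinate. You also spell out the point-set details the paper leaves tacit (continuity of the coned homotopies via the product of a quotient map with \([0,1]\), compactness of the sublevel sets \(h^{-1}[0,R]\), and Hausdorffness). One small correction: your separation argument uses the height function and the Hausdorffness of \(Y\), not the compactness of \(Y\); compactness of \(Y\) is what gives local compactness at the cone point.
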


\begin{proof}
Since the radial coordinate is unchanged, \(c^\circ f\) is proper.
If \(g\colon Z\to Y\) is a homotopy inverse, then \(c^\circ g\) is
also proper.

A homotopy \(H\colon Y\times I\to Y\) induces a proper homotopy
\[
c^\circ H([y,t],s)=[H(y,s),t],
\]
again because the radial coordinate is fixed.  Applying this to the
homotopies
\[
gf\simeq\id_Y,
\qquad
fg\simeq\id_Z
\]
proves the assertion.
\end{proof}

\subsection{Interiors of compact contractible manifolds}

\begin{lemma}\label{lem:interior-cone}
Let \(C\) be a compact contractible manifold with nonempty connected
boundary \(Y=\partial C\).  Then
\[
\Int(C)\simeqp c^\circ Y.
\]
\end{lemma}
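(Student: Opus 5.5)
The plan is to use a collar to identify the end of \(\Int(C)\) with \(Y\times[0,\infty)\), and then to collapse the compact core \(\Int(C)\setminus(\text{open end})\) properly onto the cone point.

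\textbf{Collar decomposition.} By Brown's collaring theorem, \(\partial C\) has a collar. This gives a closed embedding \(Y\times[0,1]\hookrightarrow C\) with \(Y\times\{1\}=\partial C\). Set \(K=C\setminus Y\times(0,1]\). Then \(K\) is a compact manifold with boundary \(Y\times\{0\}\), and \(K\) is a deformation retract of \(C\), so \(K\) is contractible. Reparametrizing \([0,1)\cong[0,\infty)\) yields
\[
\Int(C)=K\cup_{Y} Y\times[0,\infty),
\]
where \(Y=\partial K\) is glued to \(Y\times\{0\}\). Similarly, \(c^\circ Y=\{v\}\cup_{Y\times\{0\}}Y\times[0,\infty)\) after the quotient. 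More usefully, \(c^\circ Y\) is the union of the closed cone \(cY\) (radius \(\le1\)) with \(Y\times[1,\infty)\).

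\textbf{The maps.} Define \(F\colon\Int(C)\to c^\circ Y\) by collapsing \(K\) to \(v\) and sending \((y,t)\mapsto[y,t]\). Since \(K\) is compact and \(F\) preserves the radial coordinate outside \(K\), the map \(F\) is proper.

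For a proper inverse, define \(G\colon c^\circ Y\to\Int(C)\) on \(Y\times[1,\infty)\) by \((y,t)\mapsto(y,t)\). On the compact closed cone \(cY\), define \(G\) by an extension \(cY\to K\cup Y\times[0,1]\) of the inclusion \(Y\times\{1\}\hookrightarrow K\cup Y\times[0,1]\). Such an extension exists because \(K\cup Y\times[0,1]\) is contractible, and a map out of \(Y\) extends over the cone exactly when it is nullhomotopic. Since \(G\) maps a compact piece into a compact piece and is the identity on the radial coordinate elsewhere, it is proper.

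\textbf{Proper homotopies.} The key observation is that both composites are the identity on the region \(t\ge1\) and send the compact region \(t\le1\) into a compact region. So it suffices to build homotopies that are stationary on \(t\ge1\) and move the compact parts within compact sets. Such homotopies are automatically proper.

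For \(G\circ F\simeq\id\), both maps send the compact contractible space \(P=K\cup Y\times[0,1]\) into \(P\) and agree on \(Y\times\{1\}\). We therefore need a homotopy rel \(Y\times\{1\}\), which is a relative homotopy-extension question. Since \(P\) is contractible, any two maps \(P\to P\) agreeing on the subcomplex \(Y\times\{1\}\) are homotopic rel it. Indeed, the obstructions lie in the cohomology of \((P,Y\times\{1\})\) with coefficients in \(\pi_*(P)=0\). Equivalently, one glues the two maps into a map \(P\cup_{Y}P\to P\) and uses that the target is contractible, with \(P\) an ANR (a compact manifold) so the homotopy extension property applies.

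The same argument applies to \(F\circ G\) on \(cY\), which is contractible and fixed on \(Y\times\{1\}\).

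I expect the main obstacle to be this rel-boundary step. I would handle it with the standard fact that maps from an ANR pair into a contractible ANR that agree on the subspace are homotopic rel that subspace; this uses the homotopy extension property for the compact manifold pair \((P,\partial P)\). Connectedness of the boundary is not actually needed for this lemma; it only matters later for the one-ended interpretation.
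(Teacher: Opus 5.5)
Your proof is correct. It shares the paper's setup: the collar decomposition, the collapse map $F$ (the paper's $q$), the tail $T=Y\times[1,\infty)$, and properness from compactness of the cores. It differs in how the proper inverse and the homotopies stationary on $T$ are obtained.

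The paper argues abstractly. First, $q$ is an ordinary homotopy equivalence because $K\hookrightarrow\Int(C)$ is a cofibration and $K$ is contractible. Then, since $T$ includes as a cofibration into both spaces and $q|_T=\id_T$, the relative homotopy-inverse theorem for maps under $T$ gives an inverse $r$ with $r|_T=\id_T$ and homotopies stationary on $T$.

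You instead build $G$ by hand, as the identity on the tail and a cone extension on $cY$. You then produce the homotopies rel $Y\times\{1\}$ directly, using that both cores $P=K\cup Y\times[0,1]$ and $cY$ are contractible. In effect you prove by hand the special case of the ``homotopy equivalence under $T$'' theorem needed here. Your version is more elementary and self-contained; the paper's is shorter once the cofibration machinery is granted.

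One point to tighten. Your obstruction-theory justification presupposes a CW structure on the pair $(P,Y\times\{1\})$. A compact topological manifold need not carry one, for instance in dimension four, where this lemma is also used via the boundary-detection corollary. Rely instead on your extension argument, stated precisely. A compact contractible ANR is an AR, hence an absolute extensor for metrizable spaces. So the map defined on the closed set $P\times\{0,1\}\cup(Y\times\{1\})\times I$ by $G\circ F|_P$ at time $0$, $\id_P$ at time $1$, and the stationary homotopy on $(Y\times\{1\})\times I$ extends over $P\times I$ with values in $P$. The same applies to $cY$, which is an AR as the cone on a compact ANR. Your remark that connectedness of $Y$ is not used is correct and harmless.
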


\begin{proof}
Choose a collar of the boundary and reparametrize its open part so
that
\[
\Int(C)=K\cup_Y\bigl(Y\times[0,\infty)\bigr),
\]
where \(K\) is a compact codimension-zero submanifold, the copy of
\(Y\) is identified with \(Y\times\{0\}\), and the inclusion
\(K\hookrightarrow C\) is a homotopy equivalence.  Hence \(K\) is
contractible.

Let
\[
q\colon \Int(C)\longrightarrow \Int(C)/K
\]
be the quotient map collapsing \(K\) to a point.  The quotient is
canonically homeomorphic to the open cone:
\[
\Int(C)/K\cong c^\circ Y.
\]
The inclusion \(K\hookrightarrow\Int(C)\) is a cofibration.  Since
\(K\) is contractible, it follows that \(q\) is a homotopy equivalence.

We now verify the proper statement.  Put
\[
T=Y\times[1,\infty)\subset\Int(C).
\]
Under the above identification of the quotient with \(c^\circ Y\),
the map \(q\) restricts to the identity on \(T\).  Both inclusions
\[
T\hookrightarrow\Int(C)
\qquad\text{and}\qquad
T\hookrightarrow c^\circ Y
\]
are cofibrations.  Since \(q\) is a homotopy equivalence and
\(q|_T=\id_T\), the relative homotopy-inverse theorem for
cofibrations, applied to
\[
q\colon(\Int(C),T)\longrightarrow(c^\circ Y,T),
\]
gives a homotopy inverse
\[
r\colon c^\circ Y\longrightarrow\Int(C)
\]
such that \(r|_T=\id_T\), together with homotopies
\[
rq\simeq\id_{\Int(C)}
\qquad\text{and}\qquad
qr\simeq\id_{c^\circ Y}
\]
which are stationary on \(T\).

The map \(q\) is proper because it collapses only the compact set
\(K\) and preserves the radial coordinate off \(K\).  The closures
of the complements of \(T\) in both spaces are compact.  Since \(r\)
is the identity on \(T\), the inverse image under \(r\) of a compact
set is a closed subset of the union of a compact radial core and a
compact subset of \(T\), and is therefore compact.  The inverse
homotopies are stationary on \(T\); the same compact-core argument
shows that they are proper.  Thus
\[
\Int(C)\simeqp c^\circ Y.
\]
\end{proof}

\begin{corollary}\label{cor:boundary-homotopy-proper}
Let \(C_0\) and \(C_1\) be compact contractible \(n\)-manifolds,
where \(n\geq2\).  If
\[
\partial C_0\simeq\partial C_1,
\]
then
\[
\Int(C_0)\simeqp\Int(C_1).
\]
\end{corollary}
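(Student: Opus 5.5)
The plan is to chain the two preceding lemmas through the open cone, using that proper homotopy equivalence is an equivalence relation. First, fix a homotopy equivalence \(f\colon\partial C_0\to\partial C_1\). Both boundaries are compact Hausdorff spaces, since they are closed subsets of compact manifolds. So \cref{lem:cone-map} applies directly and gives a proper homotopy equivalence
\[
c^\circ f\colon c^\circ(\partial C_0)\longrightarrow c^\circ(\partial C_1).
\]

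Second, we need the boundaries to be nonempty and connected, so that \cref{lem:interior-cone} applies to each \(C_i\). Nonemptiness holds because a closed manifold of dimension \(n\geq 1\) has nonzero \(H_n(-;\Z/2)\) and so is not contractible. Connectedness comes from Lefschetz duality with \(\Z/2\) coefficients: \(H_1(C_i,\partial C_i)\cong H^{n-1}(C_i)=0\) for \(n\geq2\). The long exact sequence of the pair then gives
\[
H_1(C_i,\partial C_i)\to \tilde H_0(\partial C_i)\to \tilde H_0(C_i)=0,
\]
so \(\tilde H_0(\partial C_i)=0\). This is the one place where the hypothesis \(n\geq2\) is used. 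With that in hand, \cref{lem:interior-cone} yields
\[
\Int(C_0)\simeqp c^\circ(\partial C_0)
\qquad\text{and}\qquad
c^\circ(\partial C_1)\simeqp \Int(C_1).
\]

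Third, we compose. The composite of proper maps is proper, and the composite of proper homotopies is again a proper homotopy. Hence \(\simeqp\) is transitive, and stringing the three equivalences together gives \(\Int(C_0)\simeqp\Int(C_1)\).

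I expect no real obstacle. The step needing the most care is the boundary-connectedness check, and it is settled by the duality argument above. Transitivity also deserves a line of justification: if \(r_1,r_2\) are proper inverses, then \((r_1r_2)(q_2q_1)\simeqp \id\) by first applying the homotopy \(r_2q_2\simeqp\id\) in the middle and then \(r_1q_1\simeqp\id\). Each of these homotopies is proper because pre- and post-composition with proper maps preserves properness.
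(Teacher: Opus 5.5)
Your proof is correct and matches the paper's: the paper likewise combines \cref{lem:interior-cone} and \cref{lem:cone-map}. It obtains nonemptiness and connectedness of \(\partial C_i\) from Poincar\'e--Lefschetz duality (each boundary is an integral homology \((n-1)\)-sphere), and your \(\Z/2\)-coefficient argument checks this directly, while your justification of transitivity of \(\simeqp\) spells out a step the paper leaves implicit.
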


\begin{proof}
Poincar\'e--Lefschetz duality shows that each boundary is an integral
homology \((n-1)\)-sphere, and in particular is nonempty and connected.
Combine \cref{lem:interior-cone,lem:cone-map}.
\end{proof}

\begin{proposition}[Boundary recovery from the proper homotopy type]
\label{prop:boundary-recovery}
Let \(Y\) and \(Z\) be compact ANRs.  If
\[
c^\circ Y\simeqp c^\circ Z,
\]
then
\[
Y\simeq Z.
\]
\end{proposition}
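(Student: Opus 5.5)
Given a proper homotopy equivalence \(F\colon c^\circ Y\to c^\circ Z\) with proper inverse \(G\) and proper homotopies \(H\colon GF\simeq_{\mathrm{prop}}\id\) and \(K\colon FG\simeq_{\mathrm{prop}}\id\), I plan to recover \(Y\simeq Z\) by pushing everything out to a far level of the cone and retracting back. Write \(\rho\) for the radial coordinate and \(Y_t=Y\times\{t\}\). For \(t\ge 1\), the complement \(c^\circ Y\setminus c_{<t}\) is \(Y\times[t,\infty)\). Let \(p_Y\colon Y\times[t,\infty)\to Y\) be the projection; it is a homotopy equivalence with homotopy inverse \(y\mapsto (y,t)\). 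Define \(p_Z\) on \(Z\times[s,\infty)\) in the same way.

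The first step uses properness to choose levels. Since \(F\) and \(G\) are proper, the sets \(F^{-1}(c_{\le s})\) and \(G^{-1}(c_{\le s})\) are compact for every \(s\). The same holds for the preimages under \(H\) and \(K\) of compact sets in the source and target spaces. So I pick levels \(a<b<c<d\) successively, subject to two requirements. First, \(F\) sends \(Y\times[b,\infty)\) into \(Z\times[a,\infty)\), and \(G\) sends \(Z\times[c,\infty)\) into \(Y\times[b,\infty)\). Second, the homotopy \(H\) restricted to \(Y\times[d,\infty)\times I\) stays in \(Y\times[1,\infty)\), and \(K\) behaves analogously on \(Z\). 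Then I set
\[
f=p_Z\circ F\circ i_d\colon Y\to Z,\qquad g=p_Y\circ G\circ j_{d'}\colon Z\to Y,
\]
where \(i_d(y)=(y,d)\), and \(j_{d'}\) is the analogous inclusion of \(Z\) at a level \(d'\) chosen large in the same way.

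The second step checks that \(gf\simeq\id_Y\). The composite \(gf\) equals \(p_Y G\,j_{d'}\,p_Z F i_d\). The map \(j_{d'}p_Z\) is homotopic to the identity on \(Z\times[d',\infty)\) by the radial straight-line homotopy, which stays above level \(d'\). The catch is that \(F i_d(Y)\) might dip below level \(d'\). To avoid this, I order the choices as follows: choose \(d'\) first, then choose \(d\) so large that \(F(Y\times[d,\infty))\subset Z\times[d',\infty)\), and also so that \(G\) maps \(Z\times[d',\infty)\) into the region where \(p_Y\) is defined. Then \(gf\simeq p_Y G F i_d\). Next, \(H\) restricted to \(i_d(Y)\times I\) lands in \(Y\times[1,\infty)\), where \(p_Y\) is defined, so \(p_Y GFi_d\simeq p_Y i_d=\id_Y\).

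The symmetric check \(fg\simeq\id_Z\) needs the level choices to be compatible in both directions at once. I expect this bookkeeping to be the main obstacle. Instead of fixing specific levels, I will replace the composites by their restrictions to germs at infinity. Concretely, I will show that the restriction \(F|\colon Y\times[d,\infty)\to Z\times[1,\infty)\) is a homotopy equivalence between spaces homotopy equivalent to \(Y\) and \(Z\). Its homotopy inverse is \(G\) restricted to a deeper level, composed with the deformation of \(Y\times[1,\infty)\) onto \(Y\times[d,\infty)\) obtained by radial pushing. All the homotopies involved live far out, by the compactness choices above. The radial push \(Y\times[1,\infty)\to Y\times[d,\infty)\), \((y,t)\mapsto (y,t+d)\), is a homotopy equivalence that commutes with the projections. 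Using it, each composite reduces to one of the restricted homotopies \(H\) or \(K\) followed by a projection.

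The ANR hypothesis is used only to guarantee that \(Y\), \(Z\) and these subsets of the cones are well behaved. Since everything above is explicit, I expect the ANR hypothesis to be essentially cosmetic.
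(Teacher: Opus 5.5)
Your argument is correct in substance. It is essentially the paper's proof with the pro-homotopy formalism unpacked by hand. The paper's version runs as follows:
\begin{itemize}
\item the tails $Y\times[t,\infty)$ form an inverse system whose bonding inclusions are homotopy equivalences, so the system is pro-isomorphic to the constant object $Y$;
\item a proper map induces a pro-morphism by restriction to complements of compact sets, and properly homotopic maps induce the same pro-morphism;
\item full faithfulness of the constant-object functor turns the resulting pro-isomorphism into $Y\simeq Z$.
\end{itemize}
Your level choices are representatives of these pro-morphisms. The concrete content of full faithfulness is exactly what removes the obstacle you anticipate. If $F(Y\times[t_0,\infty))$ avoids the cone point, the radial homotopy gives $p_ZFi_d\simeq p_ZFi_{d''}$ for all $d,d''\ge t_0$. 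So $[f]$, and likewise $[g]$, does not depend on the level once it is large.

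Equivalently, your requirement $F(Y\times[d,\infty))\subset Z\times[d',\infty)$ is stronger than necessary. Choose $s_0$ so that $G(Z\times[s_0,\infty))$ avoids the cone point, so $p_Y$ is defined there. Then for $gf\simeq\id_Y$ it suffices that $Fi_d(Y)\subset Z\times[s_0,\infty)$ and $d'\ge s_0$, because the straight-line radial homotopy from the identity to $j_{d'}p_Z$ stays in $Z\times[s_0,\infty)$. Symmetrically, with $t_0$ chosen so that $F(Y\times[t_0,\infty))$ avoids the cone point, $fg\simeq\id_Z$ needs only $Gj_{d'}(Z)\subset Y\times[t_0,\infty)$ and $d\ge t_0$. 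The resulting lower bounds on $d$ and $d'$ depend only on $F$, $G$, $H$, $K$, $s_0$, $t_0$, not on each other. So one pair of levels serves both checks, and the detour through the restricted map $F|$ is unnecessary. If you keep that detour, the target $Z\times[1,\infty)$ must be replaced by a tail that $G$ maps off the cone point.

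Your route gives an elementary, self-contained proof that never uses the ANR hypothesis, as you say. It therefore works for arbitrary compact Hausdorff $Y$ and $Z$. The paper's route is shorter, connects directly to the standard end-theoretic machinery, and absorbs all the level bookkeeping into the pro-category formalism.
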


\begin{proof}
For \(t>0\), let
\[
U_t(Y)=Y\times[t,\infty)\subset c^\circ Y.
\]
The neighborhoods \(U_t(Y)\) are cofinal at infinity, and each of
them deformation retracts onto \(Y\times\{t\}\).  Moreover, every
bonding inclusion
\[
U_{t'}(Y)\hookrightarrow U_t(Y),
\qquad t'\geq t,
\]
is a homotopy equivalence.  Thus the pro-homotopy type at infinity of
\(c^\circ Y\) is represented by the constant inverse system on
\(Y\).  The analogous statement holds for \(c^\circ Z\).

To make the proper-homotopy step explicit, let \(F:X\to X'\) be a
proper map and choose compact exhaustions \(\{K_i\}\) and \(\{L_j\}\)
of its source and target.  For every \(j\), the compact set
\(F^{-1}(L_j)\) is contained in some \(K_i\), and hence
\[
F(X-K_i)\subseteq X'-L_j.
\]
These restrictions define a morphism of the inverse systems of
neighborhoods of infinity.  If \(H:X\times I\to X'\) is a proper
homotopy, then \(H^{-1}(L_j)\) is compact and so is its projection to
\(X\).  For sufficiently large \(i\), therefore,
\[
H((X-K_i)\times I)\subseteq X'-L_j.
\]
Thus properly homotopic maps induce the same pro-morphism, and a proper
homotopy inverse induces an inverse pro-morphism; compare
\cite[\S3.3.2 and the proof of Proposition~3.4.12]{GuilbaultEnds}.
Consequently the constant pro-objects determined by \(Y\) and \(Z\)
are isomorphic in the pro-homotopy category of ANRs.

For completeness, the constant-object functor is fully faithful
directly from the defining morphism formula for a pro-category:
\[
\operatorname{Hom}_{\operatorname{Pro}(\operatorname{Ho}(\mathrm{ANR}))}
  (cY,cZ)
=
\lim_j\operatorname*{colim}_i
\operatorname{Hom}_{\operatorname{Ho}(\mathrm{ANR})}(Y,Z)
=
[Y,Z].
\]
Consequently, there are homotopy classes
\[
[f]\colon Y\longrightarrow Z,
\qquad
[g]\colon Z\longrightarrow Y
\]
whose composites are the identity homotopy classes.  Therefore
\(Y\simeq Z\).
\end{proof}

\begin{corollary}[Boundary detection for compact contractible
interiors]\label{cor:boundary-detection}
Let \(C_0\) and \(C_1\) be compact contractible \(n\)-manifolds,
where \(n\geq2\).  Then
\[
\Int(C_0)\simeqp\Int(C_1)
\quad\Longleftrightarrow\quad
\partial C_0\simeq\partial C_1.
\]
\end{corollary}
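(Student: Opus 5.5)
The plan is to prove the two implications separately, taking each from results already established above.

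For the implication from right to left, there is nothing new to do. If \(\partial C_0\simeq\partial C_1\), then \cref{cor:boundary-homotopy-proper} gives \(\Int(C_0)\simeqp\Int(C_1)\) directly.

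For the implication from left to right, first reduce to open cones.
\begin{enumerate}[label=\textup{(\arabic*)}]
\item By Poincar\'e--Lefschetz duality, as in the proof of \cref{cor:boundary-homotopy-proper}, each \(\partial C_i\) is a nonempty connected integral homology \((n-1)\)-sphere.
\item \cref{lem:interior-cone} therefore applies and gives \(\Int(C_i)\simeqp c^\circ(\partial C_i)\) for \(i=0,1\).
\item Since proper homotopy equivalence is an equivalence relation, the hypothesis \(\Int(C_0)\simeqp\Int(C_1)\) yields \(c^\circ(\partial C_0)\simeqp c^\circ(\partial C_1)\). Transitivity holds because composites of proper maps are proper, and composites of proper homotopies with proper maps are again proper.
\end{enumerate}

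It then remains to apply \cref{prop:boundary-recovery} with \(Y=\partial C_0\) and \(Z=\partial C_1\), which gives \(\partial C_0\simeq\partial C_1\). The proposition requires \(Y\) and \(Z\) to be compact ANRs. This holds because each boundary is a compact topological manifold, and compact topological manifolds are ANRs; they are metrizable and locally Euclidean, hence locally contractible and finite-dimensional.

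I do not expect a real obstacle here, since the work is done in \cref{lem:interior-cone} and \cref{prop:boundary-recovery}. The only point to check with care is the ANR hypothesis together with the nonempty connected boundary needed for \cref{lem:interior-cone}. Both follow from \(C_i\) being a compact contractible manifold of dimension \(n\geq2\), so that duality makes \(\partial C_i\) a homology sphere.
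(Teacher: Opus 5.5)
Your proposal is correct and follows the paper's proof essentially verbatim. The reverse implication is quoted from \cref{cor:boundary-homotopy-proper}, and the forward implication combines \cref{lem:interior-cone} with \cref{prop:boundary-recovery}, after checking the compact-ANR and nonempty connected boundary hypotheses exactly as the paper does (you additionally make the transitivity step explicit).
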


\begin{proof}
Both boundaries are compact ANRs because they are compact manifolds,
and Poincar\'e--Lefschetz duality shows that they are connected.  The
reverse implication is
\cref{cor:boundary-homotopy-proper}.  For
the forward implication, use \cref{lem:interior-cone} to obtain
\[
c^\circ(\partial C_0)
\simeqp
c^\circ(\partial C_1),
\]
and then apply \cref{prop:boundary-recovery}.
\end{proof}

\subsection{Uniqueness of manifold completion}

The following completion statement is an elementary nested-collar
form of the uniqueness of open cone neighborhoods; compare
Kwun~\cite{KwunOpenCone}.  We include the argument because the
dimension-free form will also be used in the discussion of dimension
five.

\begin{proposition}[Boundary uniqueness up to \(h\)-cobordism]
\label{prop:completion}
Let \(C_0\) and \(C_1\) be compact connected topological
\(n\)-manifolds with connected boundary, where \(n\geq2\).  If
\[
\Int(C_0)\cong_{\TOP}\Int(C_1),
\]
then \(\partial C_0\) and \(\partial C_1\) are topologically
\(h\)-cobordant.
\end{proposition}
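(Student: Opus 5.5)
We prove the statement with a nested-collar argument, transporting collars through the given homeomorphism.

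Fix a homeomorphism \(h\colon\Int(C_0)\to\Int(C_1)\). By the collar theorem for topological manifolds (Brown), write
\[
\Int(C_i)=K_i\cup_{Y_i}\bigl(Y_i\times[0,\infty)\bigr),\qquad Y_i=\partial C_i ,
\]
with \(K_i\) compact and \(Y_i\times\{t\}\) bicollared in \(\Int(C_i)\). For each \(t\), the region \(Y_i\times[t,\infty)\) is a closed neighborhood of infinity, and its complement has compact closure.

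\textbf{Step 1: produce nested slices.} Since \(h(K_0\cup Y_0\times[0,1])\) is compact, it lies in \(K_1\cup Y_1\times[0,a)\) for some \(a\). Hence \(Y_1\times\{a\}\) lies in \(h(Y_0\times(1,\infty))\). By compactness of \(h^{-1}(K_1\cup Y_1\times[0,a])\), there is \(b>1\) such that \(h(Y_0\times\{b\})\) lies in \(Y_1\times(a,\infty)\). Set
\[
\Sigma_0=h(Y_0\times\{1\}),\qquad \Sigma_1=Y_1\times\{a\},\qquad \Sigma_2=h(Y_0\times\{b\}).
\]
These are disjoint bicollared copies of \(Y_0,Y_1,Y_0\), each lying in the "outside" of the previous one.

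\textbf{Step 2: identify the cobordisms.} Let \(W\) be the compact region between \(\Sigma_0\) and \(\Sigma_1\), and \(W'\) the region between \(\Sigma_1\) and \(\Sigma_2\). These regions are well defined because each slice separates \(\Int(C_1)\) into a compact side and an end side: by connectedness of the boundary and one-endedness, the complement of a bicollared slice in a product end has exactly two components. Then \(W\cup_{\Sigma_1}W'\) is the region between \(\Sigma_0\) and \(\Sigma_2\), which is \(h(Y_0\times[1,b])\), a product. Likewise the region between \(\Sigma_1\) and \(h(Y_0\times\{b'\})\) union with the region between that slice and \(Y_1\times\{a'\}\) is \(Y_1\times[a,a']\), where \(a'\) and \(b'\) are chosen by a second application of Step 1.

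\textbf{Step 3: conclude \(h\)-cobordism.} The inclusion \(\Sigma_0\hookrightarrow W\) composed with \(W\hookrightarrow W\cup W'\simeq\Sigma_0\) is a homotopy equivalence, so \(W\) dominates \(\Sigma_0\) and \(\pi_*\) of the inclusion is injective. To get a genuine homotopy equivalence, use the standard "interleaving" argument: the composite \(W\hookrightarrow W\cup W'\to W\cup W'\cup W''\), with the outer two-fold unions products, shows that \(\Sigma_1\hookrightarrow W'\) and \(W'\hookrightarrow W\cup W'\) admit one-sided inverses from both sides. A map with both a left and a right homotopy inverse is a homotopy equivalence. Concretely, the three consecutive inclusions \(A\to B\to C\to D\) have \(A\to C\) and \(B\to D\) homotopy equivalences, so \(B\to C\) has a left and a right inverse, and hence so do \(A\to B\) and \(C\to D\).

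Applied to the slice inclusions \(\Sigma_0\subset W\) and \(\Sigma_1\subset W\), both become homotopy equivalences, so \(W\) is a topological \(h\)-cobordism between \(\Sigma_0\cong Y_0\) and \(\Sigma_1\cong Y_1\).

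The main obstacle is the separation bookkeeping in Step 2: one must know that each embedded slice cuts off a compact region from the end, so that the regions are compact manifolds with the stated boundaries. We handle this by noting that \(\Sigma_j\) is bicollared and contained in the product end \(Y_1\times(0,\infty)\), where \(H_0\) and Alexander–Lefschetz duality, via connectedness of \(Y_i\), give exactly two complementary components. One of these contains \(K_1\) and has compact closure.
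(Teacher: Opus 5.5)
Your setup matches the paper's: alternating slices $\Sigma_0,\Sigma_1,\Sigma_2,\Sigma_3=Y_1\times\{a'\}$ of the two collars, with $W\cup W'$ and $W'\cup W''$ products, and the fact that a map with a left and a right homotopy inverse is a homotopy equivalence. The four-space lemma you state is correct. The gap is in Step 3, which never applies that lemma to a chain that actually satisfies its hypotheses. The chain you cite, $W\subset W\cup W'\subset W\cup W'\cup W''$, does not contain $W'$, so it cannot give the asserted conclusions about $\Sigma_1\hookrightarrow W'$ and $W'\hookrightarrow W\cup W'$. The closing sentence ``applied to the slice inclusions $\Sigma_0\subset W$ and $\Sigma_1\subset W$'' does not say which four spaces are used. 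As written, you have only a one-sided inverse for $\Sigma_0\hookrightarrow W$ and nothing for $\Sigma_1\hookrightarrow W$, which is the heart of the proposition.

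The missing ingredient is the collar collapse that the paper uses. Since $W'\cup W''\cong\Sigma_1\times I$ is glued to $W$ along one end, $W\cup W'\cup W''$ deformation retracts onto $W$. Then the chain
\[
\Sigma_0\subset W\subset W\cup W'\subset W\cup W'\cup W''
\]
satisfies the hypotheses, so $\Sigma_0\hookrightarrow W$ and $W\hookrightarrow W\cup W'\cup W''$ are homotopy equivalences. For the other end you need a second chain, e.g.\ $\Sigma_3\subset W''\subset W'\cup W''\subset W\cup W'\cup W''$. Here $W\cup W'$ is a $Y_0$-collar on $\Sigma_2$ and collapses onto $W''$, so this makes $W'\cup W''\hookrightarrow W\cup W'\cup W''$ a homotopy equivalence. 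Comparing the two factorizations of $\Sigma_1\hookrightarrow W\cup W'\cup W''$, through $W$ and through the product $W'\cup W''$, then gives $\Sigma_1\hookrightarrow W$. The paper avoids the second chain by running the lemma on the noncompact tails $A\supset B\supset C\supset D$. That directly makes $B\hookrightarrow A$ a homotopy equivalence, after which the single retraction $A\simeq W$ handles both boundary components.

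Separately, the Alexander-duality separation argument in Step 2 is unnecessary, and it assumes without arranging it that $\Sigma_0$ lies in $Y_1\times(0,\infty)$. Instead define $W=h(Y_0\times[1,\infty))\setminus(Y_1\times(a,\infty))$, a closed subset of the compact set $K_1\cup Y_1\times[0,a]$. Its boundary is $\Sigma_0\sqcup\Sigma_1$ because $\Sigma_1\subset h(Y_0\times(1,\infty))$ and $\Sigma_0\subset K_1\cup Y_1\times[0,a)$.
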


\begin{proof}
Use the given homeomorphism to identify \(\Int(C_0)\) and
\(\Int(C_1)\) with one open manifold \(M\).  For \(i=0,1\), choose a
proper topological embedding onto a closed product neighborhood of
infinity
\[
\lambda_i\colon \partial C_i\times[0,\infty)\longrightarrow M
\]
and put
\[
U_i(t)=\lambda_i\bigl(\partial C_i\times[t,\infty)\bigr).
\]
The sets \(U_i(t)\) are cofinal among neighborhoods of infinity.  For
\(t'\geq t\), the inclusion \(U_i(t')\hookrightarrow U_i(t)\) is a
homotopy equivalence.

Choose \(a,b,c,d\geq0\), with \(c\geq a\) and \(d\geq b\), by
cofinality so that
\[
\begin{aligned}
A&:=U_0(a),&
B&:=U_1(b)\subset\Int(A),\\
C&:=U_0(c)\subset\Int(B),&
D&:=U_1(d)\subset\Int(C).
\end{aligned}
\]
Let
\[
p\colon B\hookrightarrow A,\qquad
q\colon C\hookrightarrow B,\qquad
r\colon D\hookrightarrow C
\]
be the inclusions.  The composites \(p\circ q\colon C\hookrightarrow
A\) and \(q\circ r\colon D\hookrightarrow B\) are inclusions between
tails of the same respective collars, hence are homotopy equivalences.
In the homotopy category, \(q\) has left inverse
\[
(p\circ q)^{-1}\circ p
\]
and right inverse
\[
r\circ(q\circ r)^{-1}.
\]
Thus \(q\) is a homotopy equivalence, and then so is \(p\).

Set
\[
W=A\setminus\Int(B).
\]
The configuration is shown schematically in
\cref{fig:nested-collars}.

\begin{figure}[t]
\centering
\begin{tikzpicture}[
  x=0.95cm,
  y=0.72cm,
  every node/.style={font=\footnotesize},
  zero collar/.style={draw=blue!65!black,line width=0.85pt},
  one collar/.style={draw=red!70!black,line width=0.85pt,densely dashed},
  boundary label/.style={font=\footnotesize,inner sep=1pt}
]
  \path[fill=black!2,draw=black!35,line width=0.4pt]
    (0,-1.10)
    .. controls (1.8,-1.30) and (3.6,-0.98) .. (5.1,-1.12)
    .. controls (6.5,-1.24) and (7.6,-1.02) .. (8.0,-1.10)
    -- (8.0,1.06)
    .. controls (6.3,1.20) and (4.6,0.94) .. (3.15,1.11)
    .. controls (1.75,1.26) and (0.70,0.99) .. (0,1.12)
    -- cycle;

  \path[fill=black!12,draw=none]
    (0.90,-1.19)
    .. controls (0.66,-0.70) and (1.14,-0.37) .. (0.87,0.02)
    .. controls (0.60,0.42) and (1.18,0.68) .. (0.92,1.10)
    .. controls (1.50,1.15) and (2.20,1.14) .. (2.75,1.15)
    .. controls (3.02,0.73) and (2.47,0.36) .. (2.70,-0.02)
    .. controls (3.04,-0.40) and (2.53,-0.70) .. (2.78,-1.10)
    .. controls (2.15,-1.12) and (1.48,-1.22) .. (0.90,-1.19)
    -- cycle;

  \draw[zero collar]
    (0.90,-1.19)
    .. controls (0.66,-0.70) and (1.14,-0.37) .. (0.87,0.02)
    .. controls (0.60,0.42) and (1.18,0.68) .. (0.92,1.10);
  \draw[one collar]
    (2.78,-1.10)
    .. controls (2.53,-0.70) and (3.04,-0.40) .. (2.70,-0.02)
    .. controls (2.47,0.36) and (3.02,0.73) .. (2.75,1.15);
  \draw[zero collar]
    (4.58,-1.08)
    .. controls (4.32,-0.70) and (4.85,-0.38) .. (4.55,0.02)
    .. controls (4.30,0.40) and (4.84,0.70) .. (4.62,1.00);
  \draw[one collar]
    (6.35,-1.19)
    .. controls (6.10,-0.72) and (6.62,-0.38) .. (6.30,0.02)
    .. controls (6.04,0.43) and (6.62,0.73) .. (6.38,1.13);

  \node[boundary label,above] at (0.92,1.10) {$\partial A$};
  \node[boundary label,above] at (2.75,1.15) {$\partial B$};
  \node[boundary label,above] at (4.62,1.00) {$\partial C$};
  \node[boundary label,above] at (6.38,1.13) {$\partial D$};

  \node[text=blue!65!black,anchor=north,yshift=-2pt] at (0.90,-1.19)
    {$A=U_0(a)$};
  \node[text=red!70!black,anchor=north,yshift=-2pt] at (2.78,-1.10)
    {$B=U_1(b)$};
  \node[text=blue!65!black,anchor=north,yshift=-2pt] at (4.58,-1.08)
    {$C=U_0(c)$};
  \node[text=red!70!black,anchor=north,yshift=-2pt] at (6.35,-1.19)
    {$D=U_1(d)$};

  \node[font=\small] at (1.82,0) {$W$};
  \draw[->,line width=0.55pt] (7.45,0) -- (8.55,0)
    node[right,font=\small] {$\infty$};

  \draw[zero collar] (0.25,2.2) -- (0.85,2.2)
    node[right,text=black] {$U_0$-collar cross-section};
  \draw[one collar] (4.50,2.2) -- (5.10,2.2)
    node[right,text=black] {$U_1$-collar cross-section};
\end{tikzpicture}
\caption{The alternating collar tails used in the proof of
\cref{prop:completion}.  The tail determined by each cross-section lies
to its right, toward infinity, and
\(A\supset B\supset C\supset D\).  The shaded region is
\(W=A\setminus\Int(B)\).  The drawing records only the nesting; it does
not assert that \(W\) is a product cobordism.}
\label{fig:nested-collars}
\end{figure}

The nested collars are bicollared.  Since \(B\) is a closed
neighborhood of infinity, \(M\setminus\Int(B)\) is compact.  Since
\(A\) is closed in \(M\),
\[
W=A\cap\bigl(M\setminus\Int(B)\bigr)
\]
is a closed subset of this compact set and is therefore compact.
Thus \(W\) is a compact topological \(n\)-manifold with
\[
\partial W=\partial A\sqcup\partial B,\qquad
\partial A\cong\partial C_0,\quad
\partial B\cong\partial C_1.
\]
Collapsing the product tail
\(B\cong\partial B\times[0,\infty)\) onto \(\partial B\), while fixing
\(W\), gives a deformation retraction
\[
A=W\cup_{\partial B}B\longrightarrow W.
\]
Hence \(j\colon W\hookrightarrow A\) is a homotopy equivalence.  The
inclusions \(\partial A\hookrightarrow A\) and
\(\partial B\hookrightarrow B\) are homotopy equivalences by the
product-collar retractions.  Since the first factors through \(j\), the
inclusion \(\partial A\hookrightarrow W\) is a homotopy equivalence.
Similarly, the composite
\[
\partial B\hookrightarrow W\xrightarrow{\,j\,}A
\]
agrees with \(\partial B\hookrightarrow B\xrightarrow{\,p\,}A\), which
is a homotopy equivalence.  Thus
\(\partial B\hookrightarrow W\) is a homotopy equivalence as well.
Therefore \(W\) is the required topological \(h\)-cobordism.
\end{proof}

\begin{remark}
The conclusion is \(h\)-cobordism, not necessarily a product
cobordism.  The Whitehead torsion of the resulting \(h\)-cobordism
need not vanish.
\end{remark}

\begin{proof}[Proof of
\cref{thm:boundary-to-interior-transfer}]
Mutual proper homotopy equivalence follows from
\cref{cor:boundary-homotopy-proper}.  If two interiors were
homeomorphic, \cref{prop:completion} would make their boundaries
topologically \(h\)-cobordant, contrary to the hypothesis.  Applying
the argument pairwise proves the family statement.
\end{proof}

\section{Low-dimensional rigidity for compact contractible interiors}
\label{sec:low-dimensional-rigidity}

\begin{theorem}[Low-dimensional rigidity for compact contractible
interiors]\label{thm:low-dimensional-rigidity}
Let \(n\in\{3,4\}\), and let \(C_0\) and \(C_1\) be compact
contractible topological \(n\)-manifolds.  If
\[
\Int(C_0)\simeqp\Int(C_1),
\]
then
\[
C_0\cong_{\TOP}C_1.
\]
In particular,
\[
\Int(C_0)\cong_{\TOP}\Int(C_1).
\]
For \(n=3\), every such compact manifold is homeomorphic to \(B^3\).
\end{theorem}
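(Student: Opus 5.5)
The plan is to reduce everything to the boundaries via \cref{cor:boundary-detection}, and then use low-dimensional classification of compact fillings. Since \(C_0\) and \(C_1\) are compact contractible, Poincar\'e--Lefschetz duality makes each \(\partial C_i\) a closed connected integral homology \((n-1)\)-sphere. The hypothesis \(\Int(C_0)\simeqp\Int(C_1)\) gives \(\partial C_0\simeq\partial C_1\) by \cref{cor:boundary-detection}. It then remains to show that homotopy equivalent boundaries force homeomorphic fillings. The final claim \(\Int(C_0)\cong_{\TOP}\Int(C_1)\) follows by restricting the homeomorphism \(C_0\cong C_1\), which carries interior to interior by invariance of domain.

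For \(n=3\), each \(\partial C_i\) is a closed surface with the integral homology of \(S^2\). By the classification of surfaces, it is therefore \(S^2\). Capping off with a \(3\)-ball gives a closed, simply connected \(3\)-manifold \(\widehat C_i=C_i\cup_{S^2}B^3\). Simple connectivity follows from van Kampen, since \(C_i\) is contractible and \(S^2\) is simply connected. By the Poincar\'e conjecture (Perelman), \(\widehat C_i\cong S^3\). The capping \(2\)-sphere is tamely (collared) embedded in \(S^3\), so the generalized Schoenflies theorem, or Alexander's theorem in the tame case, shows that the complementary region \(C_i\) is a \(3\)-ball. Hence \(C_0\cong B^3\cong C_1\). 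This case does not even need the homotopy hypothesis.

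For \(n=4\), each \(\partial C_i\) is a closed integral homology \(3\)-sphere. The key step is to upgrade \(\partial C_0\simeq\partial C_1\) to a homeomorphism \(h\colon\partial C_0\to\partial C_1\).

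\begin{itemize}
\item If the common fundamental group is trivial, both boundaries are \(S^3\) by Perelman.
\item Otherwise, by geometrization each \(\partial C_i\) is either irreducible or a connected sum of irreducible homology spheres, with the prime decomposition determined by the fundamental group via Kneser--Milnor.
\item Irreducible closed \(3\)-manifolds with infinite fundamental group are aspherical and topologically rigid: homotopy equivalent implies homeomorphic, by Waldhausen in the Haken case and Perelman/Mostow in the hyperbolic case, with Seifert fibered cases handled by Scott.
\item Among spherical space forms, the only homology sphere with nontrivial group is the Poincar\'e sphere, which is determined by its group.
\item Using Kneser--Milnor uniqueness, these facts assemble into a homeomorphism of the boundaries. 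This rigidity of homology \(3\)-spheres is the step I expect to require the most care in citing correctly.
\end{itemize}

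Given \(h\), apply Boyer's classification~\cite{BoyerBoundary} of compact simply connected topological \(4\)-manifolds with a prescribed homology-sphere boundary. For a homology sphere boundary, the relevant data are the intersection form together with the Kirby--Siebenmann invariant. Here the intersection forms are trivial, since \(H_2(C_i)=0\), and the Kirby--Siebenmann invariant of a contractible filling is determined by the Rokhlin invariant of the boundary, which is preserved by \(h\). Boyer's theorem then says that \(h\) extends to a homeomorphism \(C_0\to C_1\). Equivalently, Freedman's theorem gives that each contractible filling is unique up to homeomorphism rel boundary. This yields \(C_0\cong_{\TOP}C_1\) and completes the proof.
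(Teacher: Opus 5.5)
Your proposal is correct and follows essentially the paper's route. For \(n=4\) you use \cref{cor:boundary-detection} to get \(\partial C_0\simeq\partial C_1\), the prime-decomposition and geometrization rigidity of integral homology \(3\)-spheres, and Boyer's classification with the zero (hence even) form; for \(n=3\) you cap off with a ball where the paper doubles \(C_i\), which is an inessential variation. The one detail the paper makes explicit that you leave implicit is that the boundary homeomorphism must be orientation-preserving before Boyer's theorem applies; you can arrange this by reversing the orientation of \(C_1\) if necessary.
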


We now prove \cref{thm:low-dimensional-rigidity}.  The point is that,
for an interior of a compact contractible manifold, the proper
homotopy type recovers the homotopy type of the boundary by
\cref{cor:boundary-detection}.  In dimensions three and four, that
boundary information is sufficiently rigid.

\begin{lemma}[Rigidity of integral homology three-spheres]
\label{lem:homology-three-sphere-rigidity}
Let \(Y_0\) and \(Y_1\) be closed oriented integral homology
three-spheres.  If
\[
Y_0\simeq Y_1,
\]
then
\[
Y_0\cong_{\TOP}Y_1.
\]
\end{lemma}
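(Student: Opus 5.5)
The plan is to combine geometrization with Borel-type rigidity for closed aspherical $3$-manifolds, and to treat finite fundamental groups separately. First I would use the prime decomposition. The Kneser--Milnor theorem writes $Y_0$ as a connected sum of primes, and on the level of $\pi_1$ this gives a free product of the prime fundamental groups. Since $H_1(Y_0;\Z)=0$, every prime summand is itself an integral homology sphere, because homology is additive under connected sum. In particular no summand is $S^1\times S^2$, since that summand has $H_1=\Z$. So every prime summand is irreducible, with finite or infinite fundamental group. The homotopy equivalence $Y_0\simeq Y_1$ gives an isomorphism $\pi_1(Y_0)\cong\pi_1(Y_1)$. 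By the Kurosh subgroup theorem together with Grushko's theorem, this isomorphism matches the freely indecomposable factors, up to conjugacy and ordering. I would then reduce to the prime case, using that a closed oriented $3$-manifold with these prime summands is determined by the summands up to orientation conventions. A connected sum does not depend on the choice of orientations of the summands up to homeomorphism if we allow each summand to be replaced by its mirror image.

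Next, for an irreducible homology-sphere summand $P$ with infinite $\pi_1$, $P$ is aspherical by the sphere theorem. For closed aspherical $3$-manifolds, geometrization gives topological rigidity: Perelman's theorem combined with Waldhausen (Haken case) and Mostow (hyperbolic case). It also covers the Seifert fibered cases by Scott's results. Under this rigidity, a $\pi_1$-isomorphism is induced by a homeomorphism. So matching infinite-$\pi_1$ summands are homeomorphic.

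For a summand with finite $\pi_1$ that is a homology sphere, $\pi_1$ is perfect. By elliptization it is either $S^3$ or the Poincar\'e sphere $S^3/\widetilde{A}_5$ (binary icosahedral). Such spherical space forms are determined up to homeomorphism by $\pi_1$ in this perfect case: lens-space-type ambiguity does not arise because the group $\widetilde{A}_5$ has, up to automorphism and orientation, a unique free action on $S^3$. Trivial summands $S^3$ are simply dropped.

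The step I expect to be the main obstacle is the orientation bookkeeping in the connected-sum reduction. A $\pi_1$-isomorphism identifies summands only up to orientation, and $Y_0\cong Y_1$ as unoriented manifolds may require reflecting individual summands. To handle this, I would use the homotopy equivalence itself, not just the $\pi_1$-isomorphism. The equivalence has degree $\pm1$, and after composing with a reflection we may assume degree $1$. The degree on each aspherical summand is then $\pm1$ consistently, and I would argue via the splitting of the fundamental class along the free product. If this proves delicate, I would fall back on the fact that the conclusion is only unoriented homeomorphism, and adjust by reversing the orientation of $Y_1$ globally.
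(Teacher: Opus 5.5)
Your route is the paper's: prime decomposition with no $S^1\times S^2$ summands, matching of freely indecomposable factors through the $\pi_1$-isomorphism, topological rigidity for aspherical factors, and elliptization for finite ones. However, the orientation step you single out is a genuine gap, and your proposal does not close it.

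The sentence asserting that a connected sum is insensitive to the orientations of its summands is false. The Poincar\'e sphere $P$ is chiral, so Milnor's oriented uniqueness of prime decomposition gives $P\#P\not\cong_{\TOP}P\#\overline{P}$, even though the summands agree up to orientation. Your finite-factor step determines each Poincar\'e summand only ``up to automorphism and orientation,'' and a bare isomorphism of fundamental groups carries no more information than that. So nothing in your argument excludes such a pair. Your fallback of reversing the orientation of $Y_1$ globally flips every summand at once, so it cannot repair a mismatch in a single summand. (The paper's proof passes over the same point: it deduces $Y_0\cong Y_1$ from unoriented homeomorphism of corresponding factors plus uniqueness of prime decomposition.)

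To close the gap, write $Y_0=\#_iP_i$ and $Y_1=\#_jQ_j$, with $G_i=\pi_1(P_i)$, $G'_j=\pi_1(Q_j)$, classifying maps $c_i\colon P_i\to BG_i$, and let $f$ have degree $\varepsilon$. Three facts combine here:
\begin{enumerate}
\item $H_3(\ast_iG_i;\Z)=\bigoplus_iH_3(G_i;\Z)$, and the classifying map sends $[Y_0]$ to $\sum_i c_{i*}[P_i]$.
\item $f_*$ maps each $G_i$ isomorphically onto a conjugate of some $G'_{\sigma(i)}$.
\item Conjugation acts trivially on homology.
\end{enumerate}
Hence the resulting isomorphisms $\phi_i\colon G_i\to G'_{\sigma(i)}$ satisfy $\phi_{i*}c_{i*}[P_i]=\varepsilon\,c_{\sigma(i)*}[Q_{\sigma(i)}]$.

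For an aspherical factor, this says the homotopy equivalence realizing $\phi_i$ has degree $\varepsilon$. Rigidity then makes it homotopic to a homeomorphism of degree $\varepsilon$.

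For a Poincar\'e factor you need an extra input: no automorphism of the binary icosahedral group acts by $-1$ on $H_3(\widetilde A_5;\Z)\cong\Z/120$. The outer automorphism exchanges the two Galois-conjugate $2$-dimensional representations, and comparing their second Chern classes shows it acts by $49$. Concretely, it acts trivially on the $2$- and $3$-primary parts and by $-1$ on the $5$-primary part. So $P$ admits no orientation-reversing self-homotopy equivalence. This is exactly what rules out $P\#P\simeq P\#\overline P$; without it the lemma itself would fail.

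With this input, writing $\varepsilon P$ for $P$ with its orientation multiplied by $\varepsilon$, every matched pair satisfies $Q_{\sigma(i)}\cong\varepsilon P_i$ orientation-preservingly. Milnor's oriented uniqueness then gives an orientation-preserving homeomorphism $Y_0\cong\varepsilon Y_1$.
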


\begin{proof}
By the prime decomposition theorem, each \(Y_i\) is a connected sum
of irreducible integral homology three-spheres, with no
\(S^1\times S^2\)-summand~\cite{MilnorPrimeDecomposition}.  An
isomorphism of fundamental groups identifies the nontrivial freely
indecomposable factors in these decompositions.

By geometrization, an irreducible factor with infinite fundamental
group is aspherical and has torsion-free fundamental group.
Corresponding factors are \(K(\pi,1)\)-spaces and hence are homotopy
equivalent.  Such three-manifolds are topologically rigid: a homotopy
equivalence is
homotopic to a homeomorphism; see Turaev~\cite{TuraevGeometric} and
Kreck--L\"uck~\cite[Theorem~0.7]{KreckLuckRigidity}.  An irreducible factor with
finite fundamental group is spherical.  The classification of
spherical space forms shows that an integral homology three-sphere of
this type is either \(S^3\) or the Poincar\'e homology sphere; see
\cite{MilnorFreeActions,WolfSpaceForms}.  The Poincar\'e theorem and
geometrization used here follow from Perelman's work
\cite{PerelmanSurgery,PerelmanExtinction}.

Thus corresponding prime factors are homeomorphic.  Uniqueness of
prime decomposition now gives \(Y_0\cong_{\TOP}Y_1\).
\end{proof}

\begin{remark}
The homology-sphere hypothesis in
\cref{lem:homology-three-sphere-rigidity} is essential.  Closed
three-manifolds need not be determined up to homeomorphism by their
ordinary homotopy type; lens spaces give classical counterexamples.
\end{remark}

\begin{proof}[Proof of \cref{thm:low-dimensional-rigidity}]
Suppose first that \(n=3\).  Since \(C_i\) is contractible,
Poincar\'e--Lefschetz duality shows that \(\partial C_i\) is an
integral homology two-sphere, hence is homeomorphic to \(S^2\).
The double
\[
D C_i=C_i\cup_{\partial C_i}C_i
\]
is a simply connected closed three-manifold, so the Poincar\'e theorem
implies \(D C_i\cong S^3\).  The copy of \(\partial C_i\) in the
double is bicollared.  Brown's generalized Schoenflies theorem
\cite{BrownSchoenflies} therefore implies that the closure of each
complementary component is a three-ball.  Hence
\[
C_i\cong B^3.
\]

Now suppose that \(n=4\).  Put \(Y_i=\partial C_i\).  By
Poincar\'e--Lefschetz duality, each \(Y_i\) is an integral homology
three-sphere.  The assumed proper homotopy equivalence and
\cref{cor:boundary-detection} give
\[
Y_0\simeq Y_1.
\]
By \cref{lem:homology-three-sphere-rigidity}, there is a homeomorphism
\(Y_0\cong Y_1\).  After reversing the orientation of \(C_1\), if
necessary, identify both oriented boundaries with a fixed oriented
homology sphere \(Y\).

Both \(C_0\) and \(C_1\) are simply connected, and both have the zero
intersection form.  In Boyer's notation, they therefore determine
elements of \(V_0(Y)\), the set of oriented homeomorphism types of
compact simply connected oriented four-manifolds with boundary \(Y\)
and intersection form zero.  Boyer's classification shows that, when
\(Y\) is an integral homology three-sphere and the prescribed form is
even, this set has exactly one element
\cite[Proposition~0.5]{BoyerBoundary}.  Since
the zero form is even, it follows that
\[
C_0\cong_{\TOP}C_1.
\]
Taking interiors completes the proof.
\end{proof}

\begin{remark}
The conclusion in dimension four is topological.  No corresponding
smooth rigidity statement is asserted.  Moreover,
\cref{thm:low-dimensional-rigidity} concerns only open manifolds that
are interiors of compact contractible manifolds; it makes no claim
about arbitrary open contractible three- or four-manifolds with wild
ends.
\end{remark}

\section{The six-dimensional prototype}
\label{sec:six-dimensional-prototype}

We first give the dimension-six construction as a concrete prototype.
It records the normal-invariant and rho-variation calculations in
detail.  The next section isolates the shorter general criterion and
deduces all even-dimensional cases from it.

\subsection{Homology spheres and the group \texorpdfstring{\(\pi=\SL(2,7)\)}{pi=SL(2,7)}}

\subsubsection{Superperfect groups}

\begin{definition}
A group \(\pi\) is \emph{superperfect} if
\[
H_1(\pi;\Z)=0
\qquad\text{and}\qquad
H_2(\pi;\Z)=0.
\]
\end{definition}

The first condition says that \(\pi\) is perfect, while the second says
that its Schur multiplier vanishes.

\begin{theorem}[{\cite[Theorem 1]{KervaireHomologySpheres}}]\label{thm:kervaire-realization}
Let \(n\geq5\).  A group \(\pi\) occurs as the fundamental group of a
smooth integral homology \(n\)-sphere
if \(\pi\) is finitely
presented and superperfect.
\end{theorem}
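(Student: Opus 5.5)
The plan follows Kervaire's original argument: realize \(\pi\) by a framed surgery on a standard sphere, using superperfectness to kill the homology created in the middle dimensions. Fix a finite presentation
\[
\pi=\langle x_1,\dots,x_k\mid r_1,\dots,r_m\rangle .
\]

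\textbf{Step 1: realize \(\pi\).} Start from \(S^n\), or more conveniently from the boundary of \(B^{n+1}\). Perform \(k\) framed surgeries on embedded \(0\)-spheres, i.e. attach \(k\) one-handles to \(B^{n+1}\). The resulting boundary is \(\#_k(S^1\times S^{n-1})\), with free fundamental group on \(x_1,\dots,x_k\). Since \(n\geq5\), each relator \(r_j\) is represented by a disjointly embedded circle with trivial normal bundle. Performing framed surgery on these circles, i.e. attaching two-handles, produces a closed smooth manifold \(Y'\). Because \(n-1\geq 3\), the new \((n-2)\)-disks do not affect \(\pi_1\), so \(\pi_1(Y')\cong\pi\). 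The result is a trace \(W\), a handlebody with one- and two-handles only, so \(W\simeq K\), the presentation \(2\)-complex of \(\pi\), and \(\partial W=Y'\).

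\textbf{Step 2: compute \(H_*(Y')\).} By duality and the handle structure, \(H_*(Y')\) is concentrated in degrees \(1,2,n-2,n-1\). We have \(H_1(Y')=H_1(\pi)=0\). The group \(H_2(Y')\) surjects onto \(H_2(K)\), which is free abelian of rank \(m-k\) once \(H_1=0\) (note \(H_2(K)\) is torsion-free and \(H_1(K)=0\)). The degrees \(n-2,n-1\) are dual to these. The classes in \(H_2(K)\) that map to zero in \(H_2(\pi)=0\) are exactly the image of the Hurewicz map \(\pi_2(K)\to H_2(K)\); by Hopf's formula, superperfectness means this map is surjective. Hence every class of \(H_2(W)\) is spherical, and likewise its lifts in \(Y'\) can be chosen spherical.

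\textbf{Step 3: kill \(H_2\) (main obstacle).} Choose embedded \(2\)-spheres in \(Y'\) representing a basis of the relevant part of \(H_2(Y')\); since \(2<n/2\) for \(n\geq5\), they can be taken embedded and disjoint. Their normal bundles must be trivial so that the framed surgery is possible. This is arranged by working stably framed: \(W\) is parallelizable, being a handlebody with handles of index at most \(2\) attached along \(S^0,S^1\) with framings that can be chosen compatible. Consequently \(Y'\) is stably parallelizable, and an embedded \(2\)-sphere in dimension \(n\geq5\) with stably trivial normal bundle of rank \(\geq3\) has trivial normal bundle. Performing these surgeries is the same as attaching \(3\)-handles to \(W\), giving a contractible-like trace \(W'\). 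The bookkeeping to check is that this kills \(H_2\) without creating new \(H_1\) or \(H_2\), which uses \(3\leq n-2\). By duality, \(H_{n-2}\) and \(H_{n-1}\) then vanish too, yielding a homology sphere \(Y\) with \(\pi_1(Y)=\pi\).

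The delicate point is the exact-sequence check that the \(3\)-handles precisely cancel \(H_2(W)\) and that \(\pi_1\) is unchanged. For the latter we need \(n\geq5\) so that the attaching spheres and the belt-sphere codimensions stay large. Equivalently, \(W'\) is built from the acyclic complex \(K\cup\{3\text{-cells}\}\), which is acyclic because \(H_2(K)\) is spherical and free. Then \(Y=\partial W'\) is a homology sphere by Lefschetz duality applied to the acyclic \(W'\).
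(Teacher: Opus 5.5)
Your proposal is correct and is essentially Kervaire's handlebody argument. The paper cites Kervaire for this theorem and sketches the same construction in the proof of \cref{lem:odd-kervaire-antisimple}: one-handles for generators, two-handles for relators, the Hopf exact sequence with \(H_2(\pi;\Z)=0\) to represent \(H_2\) by disjoint framed embedded two-spheres, and three-handles giving an acyclic trace whose boundary is the required homology sphere. The one point worth stating explicitly is that the two-handle framings must already be fixed in Step~1 so that \(W\) is parallelizable; this is possible because \(\pi_1(SO(n-1))\to\pi_1(SO(n+1))\) is an isomorphism for \(n\geq5\), and it is what makes the normal bundles of the two-spheres trivial.
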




\begin{lemma}\label{lem:sl27-superperfect}
The group \(\pi=\SL(2,7)\) is finite, finitely presented, and
superperfect.
\end{lemma}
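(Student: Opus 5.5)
The three claims are handled separately, and only the Schur multiplier needs real input. Finiteness is immediate: \(\SL(2,7)\) is a subgroup of the finite group \(\mathrm{GL}_2(\mathbb F_7)\), and its order is \(7(7^2-1)=336\). Finite presentability also follows at once, since every finite group is finitely presented by its multiplication table (generators the elements, relations \(g\cdot h=gh\)). The substance of the lemma is therefore the vanishing of \(H_1\) and \(H_2\).

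For perfectness, I will use that \(\SL(2,q)\) is generated by the elementary transvections
\[
\begin{pmatrix}1&x\\0&1\end{pmatrix},\qquad
\begin{pmatrix}1&0\\x&1\end{pmatrix},
\]
and that for \(q=7\) each of these is a commutator. Conjugating by the diagonal matrix \(\mathrm{diag}(a,a^{-1})\) rescales \(x\) to \(a^2x\), so
\[
\Bigl[\mathrm{diag}(a,a^{-1}),\begin{pmatrix}1&y\\0&1\end{pmatrix}\Bigr]
=\begin{pmatrix}1&(a^2-1)y\\0&1\end{pmatrix}.
\]
Choosing \(a=3\), for which \(a^2-1=8\equiv1\neq0\) in \(\mathbb F_7\), realizes every upper transvection as a commutator, and the lower ones are handled the same way. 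Hence \(H_1(\pi;\Z)=\pi^{\mathrm{ab}}=0\). Alternatively, one can note that \(\pi\) is a nonsplit central extension of the simple group \(\PSL(2,7)\) by \(\Z/2\), and a nonsplit central extension of a perfect group is perfect.

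For \(H_2(\pi;\Z)=0\), the plan is to use the Schur multiplier of \(\PSL(2,7)\), which is \(\Z/2\) (classical, due to Schur; the only exceptional \(\PSL(2,q)\) are \(q=4,9\)), and the fact that \(\SL(2,7)\to\PSL(2,7)\) is the universal central extension. The universal central extension \(\widetilde G\) of a perfect group \(G\) is itself superperfect. This follows from the five-term exact sequence for \(1\to H_2(G)\to\widetilde G\to G\to1\):
\[
H_2(\widetilde G)\to H_2(G)\to H_2(G)/[\widetilde G,H_2(G)]\to H_1(\widetilde G)\to H_1(G).
\]
Together with the universality of \(\widetilde G\), this sequence forces \(H_2(\widetilde G)=0\) (Kervaire, Milnor's \emph{Introduction to algebraic \(K\)-theory}, \S5). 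Identifying \(\SL(2,7)\) with the universal central extension therefore needs only two facts: the extension is nonsplit, which holds because \(-I\) is the unique involution in \(\SL(2,q)\) for odd \(q\), so no complement exists; and \(|H_2(\PSL(2,7))|=2\).

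The main obstacle is the citation for the Schur multiplier. If I want to avoid quoting it, I will argue directly with Sylow subgroups instead. The \(p\)-part of \(H_2(\pi)\) injects into \(H_2(P)\) for a Sylow \(p\)-subgroup \(P\) via restriction and transfer. The Sylow \(3\)- and \(7\)-subgroups are cyclic, so their \(H_2\) vanishes. The Sylow \(2\)-subgroup of \(\SL(2,7)\) is generalized quaternion of order \(16\), and generalized quaternion groups have periodic cohomology with \(H_2(Q;\Z)=0\). Hence every primary part of \(H_2(\pi;\Z)\) vanishes, so \(H_2(\pi;\Z)=0\). I would present this Sylow argument as the main proof, since it is self-contained given standard group cohomology.
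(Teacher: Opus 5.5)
Your proof is correct, but your main argument takes a different route from the paper's. The paper proves the lemma in one line. It notes that \(\PSL(2,7)\) is simple and that \(\SL(2,7)\) is its universal central extension, read off from the Schur multiplier data in the Atlas and Karpilovsky. It then invokes the general fact that a universal central extension of a perfect group is superperfect, and leaves finiteness and finite presentability implicit. That is exactly your fallback route.

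Your primary route works directly:
\begin{itemize}
\item You prove perfectness by hand: every transvection is a commutator with \(\operatorname{diag}(3,3^{-1})\).
\item You prove \(H_2(\pi;\Z)=0\) one prime at a time via transfer, using that the Sylow subgroups of \(\SL(2,7)\) are \(\Z/3\), \(\Z/7\) and the generalized quaternion group \(Q_{16}\). All of these have periodic cohomology and hence vanishing \(H_2\).
\end{itemize}

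The paper's version is shorter, but it outsources the only substantive input to tables: that \(H_2(\PSL(2,7);\Z)\cong\Z/2\) and that \(\SL(2,7)\) is the universal cover. Your version needs no knowledge of the Schur multiplier of \(\PSL(2,7)\) and is self-contained modulo standard group cohomology. It also recovers a posteriori that \(\SL(2,7)\) is the universal central extension, since a perfect central extension with vanishing \(H_2\) is universal. Finally, it applies verbatim to any finite perfect group with periodic cohomology, for instance \(\SL(2,p)\) for every prime \(p\geq5\).

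One small correction to a side remark: a nonsplit central extension of a perfect group need not be perfect in general. For example, \(\SL(2,5)\times\Z/2\to\PSL(2,5)\) is central and nonsplit, since \(\SL(2,5)\) has a unique involution and so contains no copy of \(A_5\), yet it has abelianization \(\Z/2\). The remark does hold in your situation because the kernel has prime order. A proper commutator subgroup would then have index \(2\) and map isomorphically onto \(\PSL(2,7)\), giving a splitting. Your main argument does not use this remark, so nothing breaks.
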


\begin{proof}
The group \(\PSL(2,7)\) is nonabelian simple, and
\(\SL(2,7)\) is its universal central extension:
\[
1\longrightarrow \Z/2
\longrightarrow \SL(2,7)
\longrightarrow \PSL(2,7)
\longrightarrow1.
\]
A universal central extension of a perfect group is superperfect.
See, for example, the character and Schur multiplier information in
the \emph{Atlas of Finite Groups}~\cite{Atlas} or
Karpilovsky~\cite{Karpilovsky}.
\end{proof}

Combining \cref{thm:kervaire-realization,lem:sl27-superperfect}
gives the following.

\begin{corollary}\label{cor:existence-X}
There exists a closed oriented smooth five-manifold \(X\) such that
\[
H_*(X;\Z)\cong H_*(S^5;\Z)
\qquad\text{and}\qquad
\pi_1(X)\cong\SL(2,7).
\]
\end{corollary}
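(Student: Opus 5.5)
This follows by specializing \cref{thm:kervaire-realization} to \(n=5\) and \(\pi=\SL(2,7)\). The group-theoretic hypotheses are exactly what \cref{lem:sl27-superperfect} supplies: \(\pi\) is finite, hence finitely presented, and it is superperfect because it is the universal central extension of the simple group \(\PSL(2,7)\). Since \(5\geq5\), Kervaire's theorem produces a smooth integral homology \(5\)-sphere \(X\) with \(\pi_1(X)\cong\SL(2,7)\).

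Two small points remain: closedness and orientability. Kervaire's homology spheres are closed by definition, since the condition \(H_*(X;\Z)\cong H_*(S^5;\Z)\) includes \(H_5(X;\Z)\cong\Z\). For a connected closed \(5\)-manifold, the top integral homology is \(\Z\) exactly when the manifold is orientable. Orientability also follows from perfectness, because the orientation character \(\pi_1(X)\to\Z/2\) factors through \(H_1(X;\Z)=0\). We fix either orientation, and \(X\) is then a closed oriented smooth five-manifold with the stated homology and fundamental group.

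If one wants more than a citation, recall Kervaire's construction. Take a finite presentation of \(\pi\) with \(g\) generators and \(r\) relators. Form the connected sum of \(g\) copies of \(S^1\times S^4\), and do surgery on \(r\) embedded circles representing the relators; in dimension \(5\), circles embed with trivial normal bundle. Then kill the resulting \(H_2\) by surgery on \(2\)-spheres. This last step is possible because \(H_2(\pi)=0\): the Hopf sequence shows that \(H_2\) of the manifold is the image of \(\pi_2\), so it is spherical. Duality then gives the homology of \(S^5\). The only real obstacle is this \(H_2\)-killing step, and it is exactly where superperfectness is used. Since it is contained in \cref{thm:kervaire-realization}, the corollary needs no new work beyond checking these hypotheses.
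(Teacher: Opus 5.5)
Your proposal is correct and is the paper's own argument: the corollary comes from combining Kervaire's realization theorem (\cref{thm:kervaire-realization}) with \cref{lem:sl27-superperfect}. Your extra observations are accurate and only make explicit what the paper leaves implicit, namely that $H_5(X;\Z)\cong\Z$ forces $X$ to be closed and orientable, and that perfectness kills the orientation character. Your optional sketch of Kervaire's construction skips one point: the $2$-handle framings must be chosen so that $w_2(M)=0$, since otherwise the $2$-spheres need not have trivial normal bundles. Because your proof rests on the citation rather than on the sketch, this does not affect its validity.
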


For the remainder of this section, fix such a manifold \(X\) and write
\[
\pi=\pi_1(X)\cong\SL(2,7).
\]

\subsection{The surgery exact sequence}

All manifolds in this section are oriented.  Thus the orientation
character is trivial, and every group ring \(\Z\pi\) is equipped with
the standard involution \(g\mapsto g^{-1}\).

\subsubsection{The simple structure set}

\begin{definition}
The smooth simple structure set \(\cS^s_{\DIFF}(X)\) consists of
equivalence classes of pairs
\[
(M,f),
\]
where \(M\) is a closed smooth five-manifold and
\[
f\colon M\longrightarrow X
\]
is a simple homotopy equivalence.

Two pairs \((M_0,f_0)\) and \((M_1,f_1)\) are equivalent if there is
an \(s\)-cobordism \(W\) from \(M_0\) to \(M_1\), together with a
simple homotopy equivalence
\[
F\colon W\longrightarrow X\times I
\]
restricting to the given maps on the boundary.
\end{definition}

For \(\dim X=5\), the relevant portion of the smooth surgery exact
sequence is
\begin{equation}\label{eq:surgery-sequence}
\cN_{\partial}(X\times I)
\xrightarrow{\theta}
L_6^s(\Z\pi)
\xrightarrow{\omega}
\cS^s_{\DIFF}(X)
\xrightarrow{\eta}
\cN(X)
\xrightarrow{\theta}
L_5^s(\Z\pi).
\end{equation}
Here
\[
\cN(X)\cong[X,G/O]
\]
and
\[
\cN_{\partial}(X\times I)
\cong
[X\times I,X\times\partial I;G/O].
\]
Here the letter \(G\) in \(G/O\) denotes the stable monoid of
self-homotopy equivalences of spheres; it is unrelated to the
fundamental group \(\pi=\pi_1(X)\).
See Wall~\cite{Wall} and Ranicki~\cite{RanickiAG}.

The map \(\omega\) is Wall realization: an element of
\(L_6^s(\Z\pi)\) acts on the simple structure set.  Exactness means in
particular that
\[
\omega(a)=\omega(b)
\]
at the base structure if and only if
\[
a-b\in\im(\theta).
\]

\subsubsection{Low-dimensional homotopy groups of \texorpdfstring{\(G/O\)}{G/O}}

The low-dimensional homotopy groups of \(G/O\) are
\[
\pi_i(G/O)=
\begin{cases}
\mathbb Z/2,& i=2,6,\\
\mathbb Z,& i=4,\\
0,& i=1,3,5.
\end{cases}
\]
See, for example, Ranicki~\cite[Remark~9.22]{RanickiAG}.

\begin{lemma}\label{lem:normal-invariants-zero}
For the integral homology five-sphere \(X\),
\[
[X,G/O]=0.
\]
\end{lemma}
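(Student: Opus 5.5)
The plan is to compute \([X,G/O]\) by obstruction theory on a CW structure of \(X\). The only inputs are the cohomology of \(X\) and the homotopy groups of \(G/O\) in degrees at most five listed above. Since \(X\) is a closed smooth five-manifold, it has the homotopy type of a finite five-dimensional CW complex. Moreover \(G/O\) is a connected infinite loop space, in particular a simple H-space. So \([X,G/O]\) is a group, free and based homotopy classes agree, and the standard obstruction theory for maps into a simple space applies without local-coefficient issues.

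First I compute the relevant cohomology of \(X\). Since \(H_*(X;\Z)\cong H_*(S^5;\Z)\), we have \(H_i(X;\Z)=0\) for \(1\leq i\leq 4\) and \(H_5(X;\Z)\cong\Z\) is free. The universal coefficient theorem then gives, for every abelian group \(A\),
\[
H^i(X;A)\cong \operatorname{Hom}(H_i(X),A)\oplus\operatorname{Ext}(H_{i-1}(X),A)=0,\qquad 1\leq i\leq 4,
\]
and \(H^5(X;A)\cong A\). The case \(i=1\) uses \(H_0=\Z\) free.

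Next I run the obstruction-theoretic argument, or equivalently the Atiyah--Hirzebruch spectral sequence for the generalized cohomology theory represented by \(G/O\). The successive differences between two maps \(X\to G/O\), skeleton by skeleton, lie in the groups \(H^i(X;\pi_i(G/O))\) for \(1\leq i\leq 5\). For \(i=1,3,5\) these vanish because \(\pi_i(G/O)=0\). For \(i=2,4\) they vanish because \(H^i(X;A)=0\) by the previous step. Hence every map \(X\to G/O\) is homotopic to the constant map, so \([X,G/O]=0\).

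As a cross-check, one can use the collapse map \(c\colon X\to S^5\) onto the top cell, which induces an integral homology isomorphism. Since \(G/O\) is simple, the induced map \(c^*\colon[S^5,G/O]\to[X,G/O]\) is a bijection, and the left-hand side is \(\pi_5(G/O)=0\).

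The main point requiring care is not computational. It is justifying that obstruction theory applies with untwisted coefficients even though \(\pi_1(X)=\SL(2,7)\) is nontrivial. This holds because the coefficient systems come from the target \(G/O\), which is simply connected and an H-space, so the source fundamental group plays no role. With this in place, the computation reduces to the vanishing recorded above.
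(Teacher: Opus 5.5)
Your argument is correct and is essentially the paper's proof: the paper passes to the Postnikov section $P_5(G/O)$, a two-stage fibration $K(\Z,4)\to P_5(G/O)\to K(\Z/2,2)$, and kills the two stages using $H^2(X;\Z/2)=0$ and $H^4(X;\Z)=0$, which are exactly the groups $H^i(X;\pi_i(G/O))$ with nonzero coefficients in your skeleton-by-skeleton obstruction argument. Your collapse-map cross-check is also valid, since $[-,Y]$ turns integral homology equivalences into bijections when $Y$ is simple, and it gives a slicker alternative route to the same vanishing.
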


\begin{proof}
Let \(P_5(G/O)\) denote the fifth Postnikov section of \(G/O\).
Since \(X\) has dimension \(5\), the canonical map
\[
G/O\longrightarrow P_5(G/O)
\]
induces a bijection
\[
[X,G/O]\cong [X,P_5(G/O)].
\]

The above homotopy-group calculation gives a Postnikov fibration
\[
K(\mathbb Z,4)
\longrightarrow P_5(G/O)
\longrightarrow K(\mathbb Z/2,2),
\]
possibly with a nontrivial \(k\)-invariant.

Let \(f\colon X\to P_5(G/O)\) be a map.  Its composite with
\(P_5(G/O)\to K(\mathbb Z/2,2)\) represents a class in
\[
H^2(X;\mathbb Z/2)=0,
\]
and is therefore null-homotopic.  Hence \(f\) is homotopic to a map
into the fiber \(K(\mathbb Z,4)\).  Such maps are classified by
\[
H^4(X;\mathbb Z)=0.
\]
Thus \(f\) is null-homotopic, and consequently
\[
[X,G/O]=0.
\]
\end{proof}

\begin{lemma}\label{lem:relative-normal}
There is a natural isomorphism
\[
\cN_{\partial}(X\times I)\cong\Z/2.
\]
\end{lemma}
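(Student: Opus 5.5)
We identify the relative normal invariants with a homotopy set and compute it by the same Postnikov argument as in \cref{lem:normal-invariants-zero}, now with the degree shifted by one.

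By definition,
\[
\cN_{\partial}(X\times I)\cong[X\times I,X\times\partial I;G/O].
\]
The pair \((X\times I,X\times\partial I)\) has quotient homotopy equivalent to \(\Susp(X_+)\), and \(\Susp(X_+)\simeq \Susp X\vee S^1\). Since \(G/O\) is an infinite loop space, this set is naturally an abelian group, namely \([\Susp X,G/O]\oplus\pi_1(G/O)\). The summand \(\pi_1(G/O)\) vanishes. So it remains to compute \([\Susp X,G/O]\), which is isomorphic to \([X,\Omega G/O]\).

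Now \(\Susp X\) is a simply connected six-dimensional complex, so it is an integral homology six-sphere. By the Hurewicz and Whitehead theorems it is homotopy equivalent to \(S^6\). Indeed, \(\Susp X\) is simply connected with \(\tilde H_*(\Susp X)\cong\tilde H_*(S^6)\), and a generator of \(\pi_6\cong H_6\) gives a map \(S^6\to\Susp X\) that is a homology isomorphism between simply connected CW complexes. It follows that
\[
[\Susp X,G/O]\cong\pi_6(G/O)\cong\Z/2,
\]
using the table of low-dimensional homotopy groups quoted above. The isomorphism is natural: it is induced by the degree-one collapse \(X\times I/X\times\partial I\to S^6\).

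If one prefers not to use the homotopy equivalence \(\Susp X\simeq S^6\), the alternative is an obstruction-theory count. The relevant cohomology groups are \(\tilde H^k(\Susp X;\pi_k(G/O))\cong \tilde H^{k-1}(X;\pi_k(G/O))\). These vanish except at \(k=6\), where the group is \(H^5(X;\Z/2)=\Z/2\). This leaves a single \(\Z/2\) with no extension problem.

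The main point requiring care is the identification of the relative normal invariant set with the reduced homotopy set and the splitting of the \(S^1\) summand, that is, basepoint issues. Both are harmless because \(\pi_1(G/O)=0\). The actual computation is then immediate.
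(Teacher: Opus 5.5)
Your argument is correct and is essentially the paper's proof: you identify the relative set with based maps out of $\Sigma X\vee S^1$, discard the $S^1$ summand because $G/O$ is simply connected, and use Hurewicz--Whitehead to get $\Sigma X\simeq S^6$, so the answer is $\pi_6(G/O)\cong\Z/2$. One small slip: $\Sigma X$ is a homology six-sphere because of the suspension isomorphism and $X$ being a homology $5$-sphere, not because it is simply connected and six-dimensional, but you state the correct reason in the next sentence, and your obstruction-theory count is a valid optional cross-check.
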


\begin{proof}
Collapsing \(X\times\partial I\) gives
\[
(X\times I)/(X\times\partial I)
\simeq
\Susp X\vee S^1.
\]
Since \(G/O\) is simply connected, the \(S^1\)-summand makes no
contribution.  Hence
\[
\cN_{\partial}(X\times I)
\cong[\Susp X,G/O].
\]

The suspension \(\Susp X\) is simply connected, and suspension shifts
reduced homology:
\[
\widetilde H_i(\Susp X;\Z)
\cong
\widetilde H_{i-1}(X;\Z).
\]
Consequently, \(\Susp X\) is a simply connected integral homology
six-sphere. The Whitehead theorem gives
\[
\Susp X\simeq S^6.
\]
Therefore
\[
[\Susp X,G/O]
\cong
\pi_6(G/O)
\cong
\Z/2.
\]
\end{proof}

By \cref{lem:normal-invariants-zero,lem:relative-normal},
\eqref{eq:surgery-sequence} reduces to
\begin{equation}\label{eq:reduced-surgery-sequence}
\Z/2
\xrightarrow{\theta}
L_6^s(\Z\pi)
\xrightarrow{\omega}
\cS^s_{\DIFF}(X)
\longrightarrow0.
\end{equation}

In particular, the Wall action is transitive, and its stabilizer at
the base point is finite.

\subsection{The representation ring}

\subsubsection{The negative representation space}

Let \(R_{\C}(\pi)\) be the complex representation ring of \(\pi\), and
let
\[
\overline{\phantom{x}}\colon R_{\C}(\pi)\longrightarrow R_{\C}(\pi)
\]
be the involution induced by complex conjugation, equivalently by
taking dual representations for unitary finite-dimensional
representations.  Define
\[
R^-_{\Q}(\pi)
=
\left\{
u\in R_{\C}(\pi)\otimes\Q
\;\middle|\;
\overline{u}=-u
\right\}.
\]
If \(\chi\) is an irreducible character not isomorphic to its complex
conjugate, then
\[
\chi-\overline{\chi}\in R^-_{\Q}(\pi)
\]
is nonzero.

We use Wall's \emph{simple} decoration throughout.  Define
\(\sigma_\pi\) to be Wall's \(G\)-signature homomorphism, with the
normalization appearing in the rho-variation formula
\cite[Equation~(1.2)]{CrowleyMacko}:
\[
\sigma_\pi\colon L_6^s(\Z\pi)\longrightarrow R^-_{\Q}(\pi)
\]
for the finite group \(\pi\).  The rational multisignature theorem
states that its rationalization is an isomorphism:
\begin{equation}\label{eq:multisignature-rational}
\sigma_\pi\otimes\Q\colon
L_6^s(\Z\pi)\otimes\Q
\xrightarrow{\;\cong\;}
R^-_{\Q}(\pi).
\end{equation}
This is the \(4k+2\)-dimensional part of the finite-group
multisignature calculation; see
\cite[Theorems~13A.2--13A.4]{Wall} and Petrie~\cite{Petrie}.

\subsubsection{A complex-type representation of
\texorpdfstring{\(\SL(2,7)\)}{SL(2,7)}}

The quotient $\PSL(2,7)$
has a pair of nonisomorphic complex-conjugate irreducible
representations of dimension three.  Their characters are
traditionally denoted by $\chi_3$ and $\overline{\chi}_3.$
For example, on the two conjugacy classes of elements of order seven,
the corresponding character values involve
\[
\frac{-1+\sqrt{-7}}{2}
\qquad\text{and}\qquad
\frac{-1-\sqrt{-7}}{2}.
\]
See the character table in~\cite{Atlas}.

Pulling these representations back along
\[
\SL(2,7)\longrightarrow\PSL(2,7)
\]
gives a complex-conjugate pair of irreducible representations of
\(\pi=\SL(2,7)\).  Hence $\chi_3-\overline{\chi}_3\neq0$ in $R^-_{\Q}(\pi),$
and therefore \(R^-_{\Q}(\pi)\neq0\).

\begin{proposition}\label{prop:L6-infinite}
The group
\[
L_6^s(\Z[\SL(2,7)])
\]
contains an element \(a\) of infinite order such that
\[
\sigma_\pi(a)\neq0.
\]
\end{proposition}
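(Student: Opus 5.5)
The plan is to combine the rational multisignature isomorphism \eqref{eq:multisignature-rational} with the nonvanishing of \(R^-_{\Q}(\pi)\) established just above. Write \(u=\chi_3-\overline{\chi}_3\). This is a nonzero element of \(R^-_{\Q}(\pi)\), because the pulled-back characters \(\chi_3\) and \(\overline{\chi}_3\) are distinct irreducible characters of \(\pi=\SL(2,7)\). They are distinct because their values on order-seven elements differ: \(\frac{-1+\sqrt{-7}}{2}\neq\frac{-1-\sqrt{-7}}{2}\). They stay irreducible and distinct under pullback along the surjection \(\SL(2,7)\to\PSL(2,7)\).

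By surjectivity of \(\sigma_\pi\otimes\Q\), there is an element \(x\otimes\frac1m\in L_6^s(\Z\pi)\otimes\Q\) with \(\sigma_\pi(x)/m=u\). We may take \(m\) a positive integer and \(x\in L_6^s(\Z\pi)\), since every element of \(A\otimes\Q\) has the form \(x\otimes\frac1m\). Then \(\sigma_\pi(x)=mu\neq0\), because \(R_{\C}(\pi)\otimes\Q\) is a \(\Q\)-vector space and so has no torsion. Set \(a=x\).

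To see that \(a\) has infinite order, note that \(\sigma_\pi\) is a group homomorphism into a torsion-free group. If \(ka=0\) with \(k\neq0\), then \(k\,\sigma_\pi(a)=0\), which forces \(\sigma_\pi(a)=0\), a contradiction. So both required properties follow directly.

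The only delicate point is bookkeeping. The target of \(\sigma_\pi\) must genuinely be the \((-1)\)-eigenspace \(R^-_{\Q}(\pi)\) in degree \(6\equiv 2\pmod 4\), and not the self-conjugate part, which is the relevant one in degrees \(\equiv0\pmod 4\). The isomorphism \eqref{eq:multisignature-rational} is also stated for the simple decoration \(L^s\). Both points are already packaged into \eqref{eq:multisignature-rational}, citing Wall's Theorems 13A.2–13A.4. If one worried about decorations, one could instead argue in \(L^h\) or \(L^p\) and pass to \(L^s\): the decoration-change maps have \(2\)-torsion kernels and cokernels (Rothenberg sequences), so they are rational isomorphisms, and \(\sigma_\pi\) factors compatibly through them. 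I expect no further obstacle.
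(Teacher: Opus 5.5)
Your proof is correct and matches the paper's argument: you use surjectivity of \(\sigma_\pi\otimes\Q\) onto \(R^-_{\Q}(\pi)\) to realize a positive multiple of \(\chi_3-\overline{\chi}_3\) by an integral class, and you note that a class with nonzero image in a \(\Q\)-vector space cannot be torsion. Your extra remark about changing decorations is valid but not needed, since the cited rational multisignature isomorphism is already stated for \(L^s\).
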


\begin{proof}
By \eqref{eq:multisignature-rational}, the nonzero vector
\(\chi_3-\overline{\chi}_3\) is represented by an element of
\(L_6^s(\Z\pi)\otimes\Q\).  After multiplying by a positive integer to
clear denominators, we obtain an element
\(a\in L_6^s(\Z\pi)\) with \(\sigma_\pi(a)\neq0\).  Such an element
cannot be torsion, since its image lies in the \(\Q\)-vector space
\(R^-_{\Q}(\pi)\).  Thus \(a\) has infinite order.
\end{proof}

\subsection{Infinitely many homology five-spheres}

Choose \(a\in L_6^s(\Z\pi)\) as in
\cref{prop:L6-infinite}.  For \(k\in\Z\), let
\[
[X_k,f_k]
=
\omega(ka)
\in
\cS^s_{\DIFF}(X),
\]
where
\[
f_k\colon X_k\longrightarrow X
\]
is a simple homotopy equivalence.

Since \(f_k\) is a homotopy equivalence,
\[
H_*(X_k;\Z)\cong H_*(X;\Z)\cong H_*(S^5;\Z),
\]
and
\[
\pi_1(X_k)\cong \pi.
\]
Thus every \(X_k\) is a smooth integral homology five-sphere.

\begin{proposition}\label{prop:distinct-marked}
The elements
\[
[X_k,f_k]\in\cS^s_{\DIFF}(X),
\qquad k\in\Z,
\]
are pairwise distinct.
\end{proposition}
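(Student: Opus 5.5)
We argue directly from the reduced surgery exact sequence \eqref{eq:reduced-surgery-sequence}. By exactness at \(L_6^s(\Z\pi)\), two Wall-realized structures \(\omega(ka)\) and \(\omega(la)\) coincide in \(\cS^s_{\DIFF}(X)\) precisely when \((k-l)a\in\im(\theta)\). This is the orbit criterion recorded after \eqref{eq:surgery-sequence}, applied at the base structure; since \(L_6^s(\Z\pi)\) is abelian and acts through the group structure, the stabilizer of the base point is \(\im(\theta)\). The plan is therefore to show that no nonzero multiple of \(a\) lies in \(\im(\theta)\).

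By \cref{lem:relative-normal}, the source of \(\theta\) is \(\cN_\partial(X\times I)\cong\Z/2\). Since \(\theta\) is a homomorphism, \(\im(\theta)\) is a group of order at most two, and in particular every element of it is torsion. On the other hand, \(\sigma_\pi\) is additive, so
\[
\sigma_\pi\bigl((k-l)a\bigr)=(k-l)\,\sigma_\pi(a).
\]
This is nonzero for \(k\neq l\), because \(\sigma_\pi(a)\neq0\) by \cref{prop:L6-infinite} and \(R^-_{\Q}(\pi)\) is a \(\Q\)-vector space. Hence \((k-l)a\) has infinite order and cannot lie in the torsion subgroup \(\im(\theta)\). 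A simpler route would use the multisignature alone: the elements of \(\im(\theta)\) are torsion, so \(\sigma_\pi\) vanishes on \(\im(\theta)\) automatically. Either way, \(\omega(ka)\neq\omega(la)\) for \(k\neq l\).

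The main point requiring care is the justification of the orbit criterion itself. We must check that exactness of the surgery sequence at \(L_6^s(\Z\pi)\) gives \(\omega(b)=\omega(c)\) if and only if \(b-c\in\im(\theta)\), rather than only a statement about the base point. This follows from the fact that the Wall action is a group action and that the stabilizer of the base structure is \(\im(\theta)\). It also requires that \(\theta\) be a homomorphism on \(\cN_\partial(X\times I)=[\Susp X,G/O]\), which holds with the H-space structure used in surgery theory. We also need the identification \(\omega(b)=b\cdot[X,\id]\). With these standard facts, the argument above is complete.
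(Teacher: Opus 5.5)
Your proposal is correct and is essentially the paper's argument. Exactness places $(k-\ell)a$ in $\im(\theta)$, which is the stabilizer of the base structure and hence a subgroup with at most two elements, so $(k-\ell)a$ would be torsion, contradicting infinite order; you get infinite order from $\sigma_\pi(a)\neq0$, which is exactly how \cref{prop:L6-infinite} produces it. Note also that you do not need $\theta$ to be a homomorphism, since the stabilizer description alone already makes $\im(\theta)$ a subgroup of order at most two.
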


\begin{proof}
Suppose
\[
[X_k,f_k]=[X_\ell,f_\ell].
\]
Exactness of \eqref{eq:reduced-surgery-sequence} implies
\[
(k-\ell)a\in\im\bigl(
\theta\colon\Z/2\to L_6^s(\Z\pi)
\bigr).
\]
By exactness, this image is the stabilizer of the base point under the
Wall action.  It is therefore a subgroup of \(L_6^s(\Z\pi)\), and it
has at most two elements because its source has two elements.  Hence
every element of the image is killed by two, so
\(2(k-\ell)a=0\).  Since \(a\) has infinite order, \(k=\ell\).
\end{proof}

Distinct marked structures do not automatically imply distinct
underlying manifolds, since the same manifold can admit several
different markings.  We use the rho invariant to remove this
ambiguity.

\subsubsection{The rho invariant}

Let
\[
V_{\Q}(\pi):=R_{\C}(\pi)\otimes\Q,
\qquad
R^-_{\Q}(\pi)
:=
\left\{
u\in V_{\Q}(\pi)
\;\middle|\;
\overline{u}=-u
\right\}.
\]
We define
\[
\widehat R_{\Q}(\pi)
=
V_{\Q}(\pi)/\langle\operatorname{reg}_{\pi}\rangle
\]
and
\[
\widehat R^-_{\Q}(\pi)
=
\left\{
\widehat u\in\widehat R_{\Q}(\pi)
\;\middle|\;
\overline{\widehat u}=-\widehat u
\right\}.
\]

Since the regular representation is self-conjugate,
the subspace
\(\langle\operatorname{reg}_{\pi}\rangle\subset V_{\Q}(\pi)\)
is preserved by complex conjugation. Hence complex conjugation descends
to an involution on the quotient \(\widehat R_{\Q}(\pi)\), and the
quotient map
\[
q\colon V_{\Q}(\pi)\longrightarrow \widehat R_{\Q}(\pi)
\]
restricts to a map
\[
q^-\colon R^-_{\Q}(\pi)\longrightarrow
\widehat R^-_{\Q}(\pi).
\]

In fact, this map is an isomorphism.

To prove injectivity, suppose that \(u\in R^-_{\Q}(\pi)\) satisfies
\(q^-(u)=0\). Then
\[
u=a\,\operatorname{reg}_{\pi}
\]
for some \(a\in\Q\). Since the regular representation is
self-conjugate, we have
\[
\overline u
=
a\,\overline{\operatorname{reg}_{\pi}}
=
a\,\operatorname{reg}_{\pi}
=
u.
\]
On the other hand, since \(u\in R^-_{\Q}(\pi)\), we also have
\[
\overline u=-u.
\]
Consequently \(u=-u\), and hence \(2u=0\). Since we are working over
\(\Q\), it follows that \(u=0\). Thus \(q^-\) is injective.

To prove surjectivity, let
\[
\widehat v\in\widehat R^-_{\Q}(\pi).
\]
Choose a representative \(v\in V_{\Q}(\pi)\) such that
\(q(v)=\widehat v\). The condition
\(\overline{\widehat v}=-\widehat v\) means that
\[
q(\overline v+v)=0.
\]
Therefore, for some \(a\in\Q\),
\[
\overline v+v
=
a\,\operatorname{reg}_{\pi}.
\]
Set
\[
u
:=
v-\frac{a}{2}\operatorname{reg}_{\pi}.
\]
Because the regular representation is self-conjugate, we obtain
\[
\begin{aligned}
\overline u
&=
\overline v-\frac{a}{2}\operatorname{reg}_{\pi} \\
&=
a\,\operatorname{reg}_{\pi}-v
-\frac{a}{2}\operatorname{reg}_{\pi} \\
&=
-v+\frac{a}{2}\operatorname{reg}_{\pi} \\
&=
-u.
\end{aligned}
\]
Thus \(u\in R^-_{\Q}(\pi)\). Moreover,
\[
q(u)=q(v)=\widehat v,
\]
since \(u-v\) is a rational multiple of the regular representation.
Hence \(q^-\) is surjective.

We therefore have a natural isomorphism
\[
R^-_{\Q}(\pi)
\overset{\cong}{\longrightarrow}
\widehat R^-_{\Q}(\pi).
\]
Equivalently, quotienting by the self-conjugate regular representation
does not change the anti-invariant part of the rational representation
ring. We denote the image of \(\sigma_{\pi}(x)\) in the quotient by
\(\widehat{\sigma}_{\pi}(x)\).

Let
\[
\lambda\colon X\longrightarrow B\pi
\]
classify the universal covering of \(X\), and define
\[
\lambda_k=\lambda\circ f_k\colon X_k\longrightarrow B\pi.
\]
For an oriented closed five-manifold \(M\) equipped with a reference
map \(\mu\colon M\to B\pi\), the Wall--Atiyah--Singer rho invariant
takes values in
\[
\rho(M,\mu)\in\widehat R^-_{\Q}(\pi).
\]
Indeed, for a closed oriented manifold of dimension \(2d-1\), the rho
invariant takes values in the \((-1)^d\)-eigenspace of the rational
complex representation ring modulo the regular representation; here
\(5=2\cdot3-1\), so \((-1)^d=-1\)
(cf. \cite[Section~2.1, Definition~2.2]{CrowleyMacko}).

We use the topological rho invariant. On smooth manifolds, this invariant agrees with the classical
Atiyah--Singer invariant
\cite[Section~7]{AtiyahSingerIII}.
Its surgery-theoretic extension is invariant under oriented
topological \(h\)-cobordism over \(B\pi\); see
Petrie~\cite{Petrie}, Wall~\cite[Chapter~14]{Wall}, and
Crowley--Macko~\cite{CrowleyMacko}.  It is also natural
under automorphisms of \(\pi\): if
\(\alpha\in\Aut(\pi)\), then
\[
\rho(M,B\alpha\circ\mu)
=
\alpha^*\rho(M,\mu).
\]
Here \(\alpha^*\) denotes pullback of characters by precomposition
with \(\alpha\).  Under the opposite convention \(\alpha^{-1}\)
appears instead; this gives the same automorphism orbit used below.
Reversing the orientation of \(M\) changes the sign of the invariant.
These properties, together with compatibility with Wall realization,
are proved in the surgery-theoretic formulation of the rho invariant;
see Petrie~\cite{Petrie}, Wall~\cite[Chapter 14]{Wall}, and
Crowley--Macko~\cite{CrowleyMacko}.

Define the reduced rho invariant of a marked structure by
\[
\rhot([f])
=
\rho(M,\lambda\circ f)-\rho(X,\lambda),
\qquad
[f\colon M\to X]\in\cS^s_{\DIFF}(X).
\]
With the normalization fixed above, the Wall-realization
formula is
\begin{equation}\label{eq:rho-wall-realization}
\rhot\bigl(\omega(x)\bigr)
=
\widehat\sigma_\pi(x),
\qquad
x\in L_6^s(\Z\pi).
\end{equation}
Consequently,
\begin{equation}\label{eq:rho-variation}
\rho(X_k,\lambda_k)
=
\rho(X,\lambda)
+
k\,\widehat\sigma_\pi(a).
\end{equation}
Since \(\sigma_\pi(a)\neq0\) and the negative eigenspace injects into
the reduced representation ring, the vector
\(\widehat\sigma_\pi(a)\) is nonzero.  Thus the values in
\eqref{eq:rho-variation} are pairwise distinct.

\begin{proposition}\label{prop:nonhomeomorphic-spheres}
There is an infinite subset \(J\subset\Z\) such that the manifolds
\[
\{X_j\}_{j\in J}
\]
are pairwise nonhomeomorphic and pairwise not topologically
\(h\)-cobordant.
\end{proposition}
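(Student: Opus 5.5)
Each \(X_k\) comes with the reference map \(\lambda_k\colon X_k\to B\pi\), and by \eqref{eq:rho-variation} the values \(\rho(X_k,\lambda_k)\) lie on the affine line \(\rho(X,\lambda)+k\,\widehat\sigma_\pi(a)\) in \(\widehat R^-_{\Q}(\pi)\), with distinct values for distinct \(k\). The difficulty is that a homeomorphism or \(h\)-cobordism between \(X_k\) and \(X_\ell\) need not be compatible with the chosen markings. So I would build an invariant of the unmarked manifold out of the rho invariant by taking the orbit under all admissible changes of reference map and of orientation.

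\emph{Step 1: the invariant.} Every \(X_k\) has \(\pi_1\cong\pi\). Any map \(X_k\to B\pi\) inducing an isomorphism on \(\pi_1\) differs from \(\lambda_k\), up to homotopy, by \(B\alpha\) for some \(\alpha\in\Aut(\pi)\), and inner automorphisms act trivially on characters. For an oriented closed manifold \(M\) with \(\pi_1(M)\cong\pi\), define the finite set
\[
\mathcal O(M)=\bigl\{\pm\,\alpha^*\rho(M,\mu)\;:\;\alpha\in\Aut(\pi)\bigr\}\subset\widehat R^-_{\Q}(\pi),
\]
where \(\mu\) is any classifying map. This set has at most \(2|\Out(\pi)|\) elements and is independent of \(\mu\) and of the orientation, by the stated naturality and orientation-reversal properties. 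Since \(\pi=\SL(2,7)\) is finite, \(\Out(\pi)\) is finite.

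\emph{Step 2: topological invariance.} Suppose \(W\) is a topological \(h\)-cobordism from \(X_k\) to \(X_\ell\). The inclusions induce isomorphisms \(\pi_1(X_k)\cong\pi_1(W)\cong\pi_1(X_\ell)\). Choose a map \(W\to B\pi\) classifying the universal cover and restrict it to the two ends; these restrictions are classifying maps for \(X_k\) and \(X_\ell\). Then \(h\)-cobordism invariance of the topological rho invariant over \(B\pi\), applied with the orientations that \(W\) induces, gives \(\mathcal O(X_k)=\mathcal O(X_\ell)\). A homeomorphism gives the trivial \(h\)-cobordism, so it suffices to treat \(h\)-cobordism.

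\emph{Step 3: pigeonhole.} Let \(r_k=\rho(X_k,\lambda_k)\). The map \(k\mapsto r_k\) is injective, so the sets \(\mathcal O(X_k)\ni r_k\) cannot all coincide. I would define an equivalence relation on \(\Z\) by \(k\sim\ell\) iff \(\mathcal O(X_k)=\mathcal O(X_\ell)\). If \(k\sim\ell\), then \(r_\ell\in\mathcal O(X_k)\). Hence each equivalence class injects into the finite set \(\mathcal O(X_k)\), and so has at most \(2|\Out(\pi)|\) elements. There are therefore infinitely many classes. Choosing one representative from each class gives an infinite \(J\), and by Step 2 the manifolds \(X_j\), \(j\in J\), are pairwise not topologically \(h\)-cobordant, and in particular pairwise nonhomeomorphic.

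The main obstacle is Step 2. It requires that the rho invariant be a genuine oriented topological \(h\)-cobordism invariant over \(B\pi\), not merely an invariant of marked structures. It also requires tracking carefully how the identification of \(\pi_1\) through \(W\) relates to the markings, which is exactly where the \(\Aut(\pi)\)-ambiguity and the sign ambiguity enter. I would rely on the cited properties from Petrie, Wall, and Crowley--Macko for this invariance, and I would check that the regular-representation quotient is \(\Aut(\pi)\)-stable, which holds because \(\alpha^*\operatorname{reg}_\pi=\operatorname{reg}_\pi\).
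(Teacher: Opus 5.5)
Your proposal is correct and is essentially the paper's argument. Both take the orbit of the rho invariant under the finite group \(\Aut(\pi)\times\{\pm1\}\). Both use \(h\)-cobordism invariance over \(B\pi\), naturality, and the orientation-reversal sign to put the values of \(h\)-cobordant \(X_k, X_\ell\) in the same orbit, and then keep one index per orbit; packaging this as an unmarked invariant \(\mathcal O(M)\) with a pigeonhole count is only a reformulation. The one step you leave implicit is that \(W\) is orientable: \(X_k\hookrightarrow W\) is a homotopy equivalence, so \(w_1(W)\) restricts to \(w_1(X_k)=0\). The paper checks this explicitly.
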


\begin{proof}
The finite group \(\Aut(\pi)\times\{\pm1\}\) acts on
\(\widehat R^-_{\Q}(\pi)\), where \(\Aut(\pi)\) acts by pullback of
representations and the second factor acts by multiplication by
\(-1\).  Consider the affine sequence
\[
v_k
=
\rho(X,\lambda)+k\,\widehat\sigma_\pi(a),
\qquad k\in\Z.
\]
It is infinite because \(\widehat\sigma_\pi(a)\neq0\).  Every
\(\Aut(\pi)\times\{\pm1\}\)-orbit is finite, so the set
\(\{v_k\mid k\in\Z\}\) meets infinitely many distinct orbits.  Choose
an infinite subset \(J\subset\Z\) containing at most one index from
each such orbit.

Suppose that a topological \(h\)-cobordism $(W;X_k,X_\ell)$ exists. Let
\[
i_k\colon X_k\longrightarrow W,
\qquad
i_\ell\colon X_\ell\longrightarrow W
\]
denote the boundary inclusions. Since \(i_k\) is a homotopy
equivalence and \(X_k\) is orientable, the identity
\[
i_k^*w_1(W)=w_1(X_k)=0
\]
implies that \(W\) is orientable.

Choose a homotopy inverse
\[
r_k\colon W\longrightarrow X_k
\]
to \(i_k\), and define
\[
\nu=\lambda_k\circ r_k\colon W\longrightarrow B\pi.
\]
Then
\[
\nu\circ i_k\simeq\lambda_k.
\]
The map \(\nu\circ i_\ell\colon X_\ell\to B\pi\) also induces an
isomorphism on fundamental groups.  Hence, after using the marking
of \(X_\ell\) to identify its fundamental group with \(\pi\), there
is an automorphism \(\alpha\in\Aut(\pi)\) such that
\[
\nu\circ i_\ell\simeq B\alpha\circ\lambda_\ell.
\]
Topological \(h\)-cobordism invariance, naturality under \(\alpha\),
and comparison of the given orientations with the boundary
orientations therefore give
\[
v_k=\varepsilon\,\alpha^*v_\ell
\]
for some \(\varepsilon\in\{\pm1\}\).
Thus \(v_k\) and \(v_\ell\) belong to the same
\(\operatorname{Aut}(\pi)\times\{\pm1\}\)-orbit. By the choice of \(J\),
this implies \(k=\ell\). Hence the manifolds \(X_j\), \(j\in J\), are
pairwise not topologically \(h\)-cobordant. In particular, they are
pairwise nonhomeomorphic.
\end{proof}

\subsection{Contractible fillings}

We now apply Kervaire's theorem.

\begin{theorem}[{\cite[Theorem 3]{KervaireHomologySpheres}}]\label{thm:kervaire-filling}
Let \(\Sigma^n\) be a smooth oriented homology sphere with \(n\geq5\).
There exists a unique smooth homotopy sphere \(\Theta^n\) such that
\[
\Sigma^n\#\Theta^n
\]
bounds a compact contractible smooth \((n+1)\)-manifold.
\end{theorem}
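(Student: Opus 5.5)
The plan is to realize the contractible filling by surgery: first find a compact smooth $(n+1)$-manifold $W$ with $\partial W=\Sigma^n$ whose only defect is its homology, kill that homology by surgery below the middle dimension, and then analyze what is left in the middle dimension. The analysis should show that the residual obstruction is exactly a homotopy sphere, which is absorbed by connected sum with $\Theta^n$.

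\emph{Step 1: a parallelizable filling.} Since $\Sigma=\Sigma^n$ is a homology sphere, it is stably parallelizable. The obstructions to stable parallelizability lie in $H^{i}(\Sigma;\pi_{i-1}(O))$, and these groups vanish for $0<i<n$. The top obstruction is handled as in Kervaire--Milnor: it can be adjusted by changing the framing, or it vanishes via the $J$-homomorphism argument. Hence $\Sigma$ carries a stable framing, and framed bordism gives $[\Sigma]\in\Omega_n^{\mathrm{fr}}\cong\pi_n^s$. The image of $J$ is realized by framings of $S^n$. After re-choosing the framing, the class is represented by a framed homotopy sphere $\Theta'$, in the sense that $\Sigma\#(-\Theta')$ is framed null-bordant. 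So I take $\Theta=-\Theta'$ and a framed manifold $W$ with $\partial W=\Sigma\#\Theta$.

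\emph{Step 2: framed surgery on the interior of $W$.}
\begin{itemize}
\item First make $W$ connected.
\item Next kill $\pi_1(W)$. Because $\pi_1(\Sigma)=\pi$ is only superperfect, not trivial, the goal is to make the inclusion $\Sigma\hookrightarrow W$ induce a surjection onto a perfect group. The cleaner route is to add $1$-handles and $2$-handles so that $\pi_1(W)$ becomes the normal closure quotient that is trivial. Superperfectness ($H_2(\pi)=0$) is used exactly to see that $H_2(W)$ can then be made to vanish.
\item Then perform surgeries below the middle dimension, using the framing, to make $W$ highly connected relative to the homology computation.
\end{itemize}
Lefschetz duality together with $\tilde H_*(\partial W)=0$ reduces everything to killing $H_m(W)$ in the middle dimension, where $m\approx(n+1)/2$.

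\emph{Step 3: the middle dimension, the main obstacle.} Since $\partial W$ is a homology sphere, the intersection form on $H_m(W)$ is unimodular. It is even or quadratic because of the framing. Its surgery obstruction is a signature divisible by $8$ or an Arf invariant. This obstruction can be changed by boundary connected sum with a Milnor plumbing, which changes $\partial W$ by a homotopy sphere in $bP_{n+1}$. That change is what forces the homotopy sphere summand. After this correction, framed surgery kills $H_m(W)$. The first thing I would try is to kill the middle homology compatibly with simple connectivity, with the fundamental group already killed in Step 2. The resulting $W$ is simply connected and acyclic, hence contractible by Hurewicz and Whitehead.

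\emph{Uniqueness.} Suppose $\Sigma\#\Theta_0$ and $\Sigma\#\Theta_1$ bound contractible $C_0$ and $C_1$. Glue $C_0$ to $-C_1$ along a thickened copy of $\Sigma$ minus a ball, that is, take the boundary connected sum along the common $\Sigma$-part. The result is a contractible manifold whose boundary is $\Theta_0\#(-\Theta_1)$. By the $h$-cobordism theorem, that manifold is a disk, so $\Theta_0\cong\Theta_1$.
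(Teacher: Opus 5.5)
The paper gives no proof of this statement; it quotes it from Kervaire. Your architecture follows the standard Kervaire--Milnor route: a stable framing, a framed null-bordism of $\Sigma\#\Theta$, interior surgery, and a $bP_{n+1}$ plumbing correction in the middle dimension.

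Your uniqueness argument is sound. Glue $C_0$ and $-C_1$ along $\Sigma_0=\Sigma\setminus\Int D^n$. The result is simply connected by van Kampen, since $\Sigma_0$ is connected, and acyclic by Mayer--Vietoris, since $\Sigma_0$ is acyclic. Its boundary is $\Theta_0\#(-\Theta_1)$, and the $h$-cobordism theorem in dimension $n+1\geq6$ finishes the argument. Note that this is not a ``boundary connected sum''; that operation would produce boundary $\partial C_0\#\partial C_1$.

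\textbf{The gap in Step~1.} A framing $F$ gives $[\Sigma,F]\in\pi_n^s$. Reframing only moves this class within its coset of $\operatorname{im}J$, because $\Sigma$ minus a point is acyclic and so all framings differ by maps pulled back from $S^n$. Knowing that $\operatorname{im}J$ is realized on $S^n$ therefore does not show that this coset contains the class of a homotopy sphere. Indeed $\Theta_n\to\pi_n^s/\operatorname{im}J$ is not surjective in general. For $n=6$, which the paper uses, $\Theta_6=0$ and $\operatorname{im}J=0$, but $\pi_6^s\cong\Z/2$ is generated by $\nu^2$. That class is represented by $S^3\times S^3$ with a Lie group framing and is detected by the Kervaire invariant.

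The missing idea is to do surgery on the degree-one normal map $\Sigma\to S^n$ itself. Its obstruction in $L_n(\Z)$ behaves as follows.
\begin{itemize}
\item For $n$ odd, it is $0$.
\item For $n\equiv0\pmod4$, it is a signature difference, which vanishes.
\item For $n\equiv2\pmod4$, it is the Kervaire invariant, an Arf invariant of a quadratic form on $H_{n/2}(\Sigma;\Z/2)=0$ (Browder's definition, which needs no connectivity hypothesis).
\end{itemize}
Because $\Sigma$ has no middle-dimensional homology, the obstruction vanishes, and $\Sigma$ is framed cobordant to a homotopy sphere $\Theta'$. This is the real content of Step~1; without it, Step~1 asserts something false for general framed manifolds.

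Relatedly, the top obstruction to stable parallelizability cannot be ``adjusted by changing the framing.'' The stable framing of $\Sigma$ minus a point is unique, so only the $J$-homomorphism argument together with Adams' injectivity theorem disposes of it.

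\textbf{Step~2 misstates the mechanism.} Attaching $1$- and $2$-handles to $W$ would change $\partial W$. Also, neither perfectness nor $H_2(\pi)=0$ plays any role here. What you need is framed surgery on embedded circles in $\Int W$, which is possible since $n+1\geq6$. Choose the normal framing in $\pi_1(SO(n))\cong\Z/2$ compatibly with the stable framing. Then $H_2(W)\cong\pi_2(W)$ is killed by surgery on embedded $2$-spheres.

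When $n+1=2k+1$ is odd, you should also say that $H_k(W)$, torsion included, is killed as in Kervaire--Milnor~\S6, with no obstruction. With these repairs, Step~3 and the conclusion go through as you describe.
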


In dimensions five and six, Kervaire--Milnor proved
(cf. \cite{KervaireMilnor})
\[
\Theta_5=0,
\qquad
\Theta_6=0.
\]
Therefore no homotopy-sphere correction is needed in either boundary
dimension.

\begin{corollary}\label{cor:all-bound}
Every smooth oriented homology five-sphere bounds a compact
contractible smooth six-manifold, and every smooth oriented homology
six-sphere bounds a compact contractible smooth seven-manifold.
\end{corollary}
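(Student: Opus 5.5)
The plan is to apply \cref{thm:kervaire-filling} directly and then use the vanishing of the relevant groups of homotopy spheres. Let \(\Sigma^n\) be a smooth oriented integral homology sphere with \(n\in\{5,6\}\). By \cref{thm:kervaire-filling}, there is a smooth homotopy sphere \(\Theta^n\) such that \(\Sigma^n\#\Theta^n\) bounds a compact contractible smooth \((n+1)\)-manifold \(C\).

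The first step is to check that the connected sum does not change the diffeomorphism type of \(\Sigma^n\). Since \(\Theta_5=0\) and \(\Theta_6=0\) by Kervaire--Milnor, every smooth homotopy \(n\)-sphere with \(n\in\{5,6\}\) is \(h\)-cobordant to \(S^n\). The \(h\)-cobordism theorem then shows it is diffeomorphic to \(S^n\), since \(n\geq5\) and the spheres are simply connected; this is just the meaning of \(\Theta_n=0\) as the group of diffeomorphism classes. Hence \(\Theta^n\cong S^n\). The smooth connected sum \(\Sigma^n\# S^n\) is diffeomorphic to \(\Sigma^n\), by the standard disc-theorem argument: removing a disc from \(S^n\) leaves a disc, and gluing a disc back in recovers \(\Sigma^n\).

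The second step is to transport the filling. Fix the resulting diffeomorphism \(\partial C\cong\Sigma^n\). Then \(C\) is a compact contractible smooth \((n+1)\)-manifold with boundary \(\Sigma^n\), and the orientation can be arranged by reversing the orientation of \(C\) if necessary. Taking \(n=5\) gives the statement for homology five-spheres, and \(n=6\) gives the statement for homology six-spheres.

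There is no substantive obstacle. The only point requiring care is the identification \(\Sigma\#S^n\cong\Sigma\) in the smooth category, which is standard.
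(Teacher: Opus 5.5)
Your proposal is correct and matches the paper's argument. The paper likewise applies Kervaire's filling theorem (\cref{thm:kervaire-filling}) and uses the Kervaire--Milnor vanishing \(\Theta_5=\Theta_6=0\), so the correcting homotopy sphere is standard and \(\Sigma^n\#\Theta^n\cong\Sigma^n\); your remarks on the \(h\)-cobordism theorem and on \(\Sigma^n\#S^n\cong\Sigma^n\) only make explicit what the paper leaves implicit.
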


\begin{corollary}[Dimension-six nonrigidity]
\label{cor:dimension-six}
There is an infinite family of compact contractible smooth
six-manifolds whose interiors are mutually properly homotopy equivalent
and pairwise nonhomeomorphic.
\end{corollary}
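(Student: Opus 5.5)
The plan is to assemble pieces already established in this section and then apply \cref{thm:boundary-to-interior-transfer} with \(N=6\). Take the infinite index set \(J\subset\Z\) from \cref{prop:nonhomeomorphic-spheres}. The manifolds \(X_j\), \(j\in J\), are smooth oriented integral homology five-spheres, and they are pairwise not topologically \(h\)-cobordant. By \cref{cor:all-bound}, which uses \(\Theta_5=0\), each \(X_j\) bounds a compact contractible smooth six-manifold \(C_j\) with \(\partial C_j=X_j\). This boundary is nonempty and connected.

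Next I will check the homotopy-equivalence hypothesis. Each \(X_j\) comes with a simple homotopy equivalence \(f_j\colon X_j\to X\). So the boundaries \(\partial C_j\) are all homotopy equivalent to \(X\), and hence to each other.

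With both hypotheses verified, \cref{thm:boundary-to-interior-transfer} gives the conclusion. It shows that the interiors \(\Int(C_j)\), \(j\in J\), are mutually properly homotopy equivalent, via \cref{cor:boundary-homotopy-proper}. It also shows they are pairwise nonhomeomorphic, via \cref{prop:completion}. The family is infinite because \(J\) is infinite.

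There is no real obstacle remaining. All of the difficulty lies in \cref{prop:nonhomeomorphic-spheres}, where the rho invariant separates \(h\)-cobordism classes modulo \(\Aut(\pi)\times\{\pm1\}\). One small point needs care: the fillings must have exactly the boundaries \(X_j\), not \(X_j\#\Theta\). This holds because \(\Theta_5=0\), so every homotopy five-sphere correction term is trivial.
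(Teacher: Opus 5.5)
Your proposal is correct and matches the paper's proof essentially step for step. The paper likewise fills each \(X_j\), \(j\in J\), using \cref{cor:all-bound} (no homotopy-sphere correction is needed since \(\Theta_5=0\)). It notes that the \(X_j\) are all homotopy equivalent to \(X\) via the markings and pairwise not topologically \(h\)-cobordant by \cref{prop:nonhomeomorphic-spheres}, and then applies \cref{thm:boundary-to-interior-transfer}.
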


\begin{proof}
For each \(j\in J\), choose a compact contractible smooth
six-manifold \(C_j\) with \(\partial C_j=X_j\), using
\cref{cor:all-bound}.  The manifolds \(X_j\) are all
simple-homotopy equivalent to \(X\), and
\cref{prop:nonhomeomorphic-spheres} shows that they are pairwise not
topologically \(h\)-cobordant.  The conclusion now follows directly
from \cref{thm:boundary-to-interior-transfer}.
\end{proof}

\section{A general criterion in even dimensions}
\label{sec:general-even-criterion}

We now isolate the mechanism behind the six-dimensional construction.
Unlike the argument in the preceding sections, the general proof does
not require a calculation of the normal invariant groups of the
chosen homology sphere.

\subsection{Reduced multisignatures}

Let \(N=2d\geq6\), and let \(\pi\) be a finite group.  For
\(\varepsilon\in\{+1,-1\}\), define
\[
R^\varepsilon_{\Q}(\pi)
=
\left\{
u\in R_{\C}(\pi)\otimes\Q
\;\middle|\;
\overline u=\varepsilon u
\right\}.
\]
Since the regular representation is self-conjugate, complex
conjugation descends to
\[
\widehat R_{\Q}(\pi)
=
\bigl(R_{\C}(\pi)\otimes\Q\bigr)/
\Q\operatorname{reg}_\pi.
\]
Write
\[
\widehat R^\varepsilon_{\Q}(\pi)
=
\left\{
\widehat u\in\widehat R_{\Q}(\pi)
\;\middle|\;
\overline{\widehat u}=\varepsilon\widehat u
\right\}.
\]

Because \(\operatorname{reg}_\pi\) belongs to the positive eigenspace
and \(2\) is invertible over \(\Q\), the quotient map induces canonical
identifications
\[
\widehat R^-_{\Q}(\pi)\cong R^-_{\Q}(\pi),
\qquad
\widehat R^+_{\Q}(\pi)
\cong
R^+_{\Q}(\pi)/\Q\operatorname{reg}_\pi.
\]
We use these identifications without further comment.

\begin{theorem}[Even-dimensional criterion]
\label{thm:general-criterion}
Let \(N=2d\geq6\), and let \(\pi\) be a finite superperfect group.
Suppose that
\[
\widehat R^{(-1)^d}_{\Q}(\pi)\neq0.
\]
Then there exists an infinite family of compact contractible smooth
\(N\)-manifolds \(\{C_j\}_{j\in J}\) such that their interiors
\(M_j=\Int(C_j)\) are all properly homotopy equivalent but pairwise
nonhomeomorphic.  The boundaries \(\partial C_j\) may be chosen to be
homotopy equivalent integral homology \((N-1)\)-spheres with
fundamental group \(\pi\), and they are pairwise not topologically
\(h\)-cobordant.
\end{theorem}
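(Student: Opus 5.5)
The plan is to run the six-dimensional prototype in general dimension, but to replace the explicit normal-invariant computation with a finiteness argument.

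\emph{Base homology sphere and rational multisignature.} Since \(\pi\) is finite, it is finitely presented, so \cref{thm:kervaire-realization} gives a smooth integral homology \((N-1)\)-sphere \(X\) with \(\pi_1(X)\cong\pi\). Here \(N-1=2d-1\geq5\). The relevant portion of the smooth surgery exact sequence is
\[
\cN_\partial(X\times I)\xrightarrow{\theta}L_N^s(\Z\pi)\xrightarrow{\omega}\cS^s_{\DIFF}(X)\longrightarrow\cN(X).
\]
Wall's multisignature takes \(L_{2d}^s(\Z\pi)\otimes\Q\) isomorphically onto \(R^{(-1)^d}_{\Q}(\pi)\), which is the \(4k\)/\(4k+2\) dichotomy of \cite[13A]{Wall}. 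With the normalization of \cite{CrowleyMacko}, the reduced multisignature \(\widehat\sigma_\pi\) maps \(L_N^s(\Z\pi)\otimes\Q\) onto \(\widehat R^{(-1)^d}_{\Q}(\pi)\). The hypothesis \(\widehat R^{(-1)^d}_{\Q}(\pi)\neq0\) then lets us clear denominators and choose \(a\in L_N^s(\Z\pi)\) with \(\widehat\sigma_\pi(a)\neq0\).

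Set \([X_k,f_k]=\omega(ka)\). Each \(X_k\) is simple homotopy equivalent to \(X\), so it is an integral homology sphere with fundamental group \(\pi\). The rho invariant of a \((2d-1)\)-manifold lies in \(\widehat R^{(-1)^d}_{\Q}(\pi)\), and the Wall realization formula \eqref{eq:rho-wall-realization} gives
\[
\rho(X_k,\lambda_k)=\rho(X,\lambda)+k\,\widehat\sigma_\pi(a).
\]
This is an infinite affine sequence. The point is that this formula already separates the \(X_k\) directly. Consequently no injectivity statement on the structure set is needed, and so neither \(\cN(X)\) nor the image of \(\theta\) needs to be computed. This is the advertised simplification over the prototype.

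\emph{Separating \(h\)-cobordism classes.} We repeat the argument of \cref{prop:nonhomeomorphic-spheres} verbatim. The group \(\Aut(\pi)\times\{\pm1\}\) is finite and acts on \(\widehat R^{(-1)^d}_{\Q}(\pi)\), so its orbits are finite. We therefore choose an infinite \(J\subset\Z\) containing at most one index from each orbit met by the sequence \(v_k\). Now suppose there is a topological \(h\)-cobordism between \(X_k\) and \(X_\ell\). It is oriented, and its reference map to \(B\pi\) restricts on the two ends to \(\lambda_k\) and to \(B\alpha\circ\lambda_\ell\) for some \(\alpha\in\Aut(\pi)\). Invariance of the topological rho invariant under \(h\)-cobordism then forces \(v_k=\pm\alpha^*v_\ell\), and hence \(k=\ell\).

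\emph{Fillings.} By \cref{thm:kervaire-filling}, there is a homotopy sphere \(\Theta_j\) such that \(X_j\#\Theta_j\) bounds a compact contractible smooth \(N\)-manifold \(C_j\). Connected sum with a homotopy sphere does not change the homotopy type; this holds topologically because \(\Theta_j\) is homeomorphic to \(S^{N-1}\). So \(\partial C_j\) is homeomorphic to \(X_j\) and homotopy equivalent to \(X\). The previous step then shows that the boundaries \(\partial C_j\), \(j\in J\), are pairwise not topologically \(h\)-cobordant, and \cref{thm:boundary-to-interior-transfer} finishes the proof.

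\emph{Main obstacle.} I expect the delicate step to be fixing the conventions in the rho-variation formula. One must check that the multisignature of an element of \(L_{2d}^s\) lands in exactly the eigenspace \(\widehat R^{(-1)^d}_{\Q}(\pi)\) where \(\rho\) of \((2d-1)\)-manifolds lives, and that the formula holds for the topological rho invariant in every dimension \(2d-1\geq5\). For this I would cite Crowley--Macko's general-dimension statement rather than recompute it. A second point is the positive-eigenspace case, where one must quotient by \(\operatorname{reg}_\pi\). This is harmless because the hypothesis is stated after that quotient, so the chosen \(a\) still has nonzero reduced image.
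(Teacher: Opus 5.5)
Your proposal is correct and follows the paper's proof essentially step for step. The shared steps are: Kervaire realization of a base homology sphere; Wall realization of multiples of an integral class with nonzero reduced multisignature; the Crowley--Macko rho-variation formula in general dimension, with no computation of $\cN(X)$ or $\im(\theta)$; the finite $\Aut(\pi)\times\{\pm1\}$-orbit selection combined with topological $h$-cobordism invariance; and Kervaire filling plus the generalized Poincar\'e theorem before invoking \cref{thm:boundary-to-interior-transfer}. You leave implicit only two small points the paper spells out: $W$ is orientable because $\iota^*w_1(W)=w_1(X_k)=0$, and each $X_k$ is oriented so that $f_k$ has degree $+1$.
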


All manifolds in this section are oriented, so the orientation
character is trivial and \(\Z\pi\) carries the involution
\(g\mapsto g^{-1}\).  Define
\(\sigma_{d,\pi}\) to be Wall's unreduced \(G\)-signature
homomorphism:
\[
\sigma_{d,\pi}\colon
L_{2d}^s(\Z\pi)
\longrightarrow
R^{(-1)^d}_{\Q}(\pi).
\]
Its kernel and cokernel are torsion, so rationalization gives an
isomorphism
\[
\sigma_{d,\pi}\otimes\Q\colon
L_{2d}^s(\Z\pi)\otimes\Q
\xrightarrow{\;\cong\;}
R^{(-1)^d}_{\Q}(\pi)
\]
\cite[Theorems~13A.2--13A.4]{Wall}; see also
Petrie~\cite{Petrie}.  Composing with the quotient by the regular
representation gives the reduced multisignature
\[
\widehat\sigma_{d,\pi}\colon
L_{2d}^s(\Z\pi)
\longrightarrow
\widehat R^{(-1)^d}_{\Q}(\pi).
\]
We use Wall's convention for which this reduced map agrees with the
map denoted \(\sigma_\lambda\) in
\cite[Equation~(1.2)]{CrowleyMacko}.

We recall the compatibility between the rho invariant and Wall
realization.  Let \(M\) be a closed oriented manifold of dimension
\(2d-1\), let \(\mu\colon M\to B\pi\) be a reference map, and put
\(\Gamma=\pi_1(M)\).  For a marked structure
\([h\colon P\to M]\in\cS^s_{\TOP}(M)\), define
\[
\rhot_\mu([h])
=
\rho(P,\mu\circ h)-\rho(M,\mu).
\]
This invariant takes values in
\(\widehat R^{(-1)^d}_{\Q}(\pi)\)
\cite[Definitions~2.2 and~2.5]{CrowleyMacko}.

The homomorphism \(\mu_*\colon\Gamma\to\pi\) determines a reduced
multisignature homomorphism
\[
\widehat\sigma_\mu\colon
L_{2d}^s(\Z\Gamma)
\longrightarrow
\widehat R^{(-1)^d}_{\Q}(\pi).
\]
With this normalization, the Wall-realization formula is
\begin{equation}\label{eq:general-rho-wall}
\rhot_\mu\bigl([h]+x\bigr)
=
\rhot_\mu([h])
+
\widehat\sigma_\mu(x),
\qquad
x\in L_{2d}^s(\Z\Gamma).
\end{equation}
If \(\mu_*\colon\Gamma\to\pi\) is the chosen identification of
fundamental groups, then
\[
\widehat\sigma_\mu
=\widehat\sigma_{d,\pi}.
\]
The formula and its application to the
simple structure set follow from Petrie~\cite{Petrie} and
Crowley--Macko~\cite[Equation~(1.2) and Remark~1.7]{CrowleyMacko};
see also Wall~\cite[Chapter~14]{Wall}.

\begin{lemma}\label{lem:criterion-wall-element}
If \(\widehat R^{(-1)^d}_{\Q}(\pi)\neq0\), then there exists an
infinite-order element \(a\in L_{2d}^s(\Z\pi)\) such that
\(\widehat\sigma_{d,\pi}(a)\neq0\).
\end{lemma}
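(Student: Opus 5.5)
The plan mirrors the proof of \cref{prop:L6-infinite}, with the reduced target handled through the canonical identifications recorded just before \cref{thm:general-criterion}. Choose a nonzero vector \(\widehat u\in\widehat R^{(-1)^d}_{\Q}(\pi)\). The quotient map \(R^{(-1)^d}_{\Q}(\pi)\to\widehat R^{(-1)^d}_{\Q}(\pi)\) is surjective in both sign cases. For \(\varepsilon=-1\) it is the isomorphism \(\widehat R^-_{\Q}(\pi)\cong R^-_{\Q}(\pi)\) proved in the six-dimensional section. For \(\varepsilon=+1\) it is the quotient \(R^+_{\Q}(\pi)\to R^+_{\Q}(\pi)/\Q\operatorname{reg}_\pi\); the averaging argument given there, with the sign changed, shows this really is all of \(\widehat R^+_{\Q}(\pi)\). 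So we may lift \(\widehat u\) to some \(u\in R^{(-1)^d}_{\Q}(\pi)\) whose image is nonzero.

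Next use the rational isomorphism \(\sigma_{d,\pi}\otimes\Q\colon L_{2d}^s(\Z\pi)\otimes\Q\to R^{(-1)^d}_{\Q}(\pi)\) from \cite[Theorems~13A.2--13A.4]{Wall}. It gives an element \(x\otimes r\) mapping to \(u\); since elements of a tensor product with \(\Q\) can be written as \(x\otimes\tfrac1m\), we may take \(r=\tfrac1m\) with \(m\) a positive integer. Then \(a:=x\in L_{2d}^s(\Z\pi)\) satisfies \(\sigma_{d,\pi}(a)=m u\). Because \(\widehat\sigma_{d,\pi}\) is \(\sigma_{d,\pi}\) followed by the linear quotient map, we get \(\widehat\sigma_{d,\pi}(a)=m\widehat u\neq0\).

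Finally, \(a\) has infinite order. If \(na=0\) with \(n\ge1\), then \(n\,\widehat\sigma_{d,\pi}(a)=0\) in a \(\Q\)-vector space, which forces \(\widehat\sigma_{d,\pi}(a)=0\), a contradiction.

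The only step needing care is the first one when \(d\) is even: the regular representation lies in the \(+1\)-eigenspace, so \(\widehat R^+_{\Q}(\pi)\) is a genuine quotient. We must therefore confirm that nonvanishing of the reduced space produces a lift \(u\) whose class is still nonzero, which is exactly the surjectivity just checked. Everything else is formal from the rational multisignature isomorphism.
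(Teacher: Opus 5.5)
Your proposal is correct and follows the paper's proof essentially step for step: lift a nonzero reduced class to \(R^{(-1)^d}_{\Q}(\pi)\), use rational surjectivity of \(\sigma_{d,\pi}\otimes\Q\) and clear denominators to obtain \(a\) with \(\widehat\sigma_{d,\pi}(a)=m\widehat u\neq0\), and deduce infinite order because the target is a \(\Q\)-vector space. The only addition is your explicit check that \(R^{\pm}_{\Q}(\pi)\to\widehat R^{\pm}_{\Q}(\pi)\) is surjective (the lift \(\tfrac12(v\pm\overline v)\) works), which the paper instead takes from the identifications it records just before \cref{thm:general-criterion}.
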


\begin{proof}
Choose \(0\neq u\in\widehat R^{(-1)^d}_{\Q}(\pi)\), and choose a lift
\(\widetilde u\in R^{(-1)^d}_{\Q}(\pi)\).  Since
\(\sigma_{d,\pi}\otimes\Q\) is surjective, there exists
\(x\in L_{2d}^s(\Z\pi)\otimes\Q\) such that
\[
(\sigma_{d,\pi}\otimes\Q)(x)=\widetilde u.
\]
After multiplying by a sufficiently large positive integer \(m\), we
obtain \(a\in L_{2d}^s(\Z\pi)\) satisfying \(a\otimes1=mx\).  It
follows that
\[
\widehat\sigma_{d,\pi}(a)=m\,u\neq0.
\]
Since a torsion element has zero image in a rational vector space,
\(a\) has infinite order.
\end{proof}

\subsection{Proof of the general criterion}

\begin{proof}[Proof of \cref{thm:general-criterion}]
Let \(N=2d\geq6\) and \(n=N-1=2d-1\).  Since \(\pi\) is finite and
superperfect, it is finitely presented and satisfies
\(H_1(\pi;\Z)=H_2(\pi;\Z)=0\).  Kervaire's realization theorem
therefore gives a closed oriented smooth integral homology
\(n\)-sphere \(X\) such that
\[
\pi_1(X)\cong\pi
\]
\cite[Theorem~1]{KervaireHomologySpheres}.  Fix such an identification,
and let \(\lambda\colon X\to B\pi\) classify the universal covering.

Choose \(a\in L_N^s(\Z\pi)\) as in
\cref{lem:criterion-wall-element}.  Wall realization defines marked
structures
\[
[X_k,f_k]
=
\omega(ka)
\in
\cS^s_{\DIFF}(X),
\qquad
k\in\Z,
\]
where \(f_k\colon X_k\to X\) is a simple homotopy equivalence.
Choose the orientation of \(X_k\) so that \(f_k\) has degree \(+1\).
In particular, each \(X_k\) is a smooth integral homology \(n\)-sphere,
and \(f_k\) identifies
\(\pi_1(X_k)\cong\pi_1(X)\cong\pi\).  Put
\[
\lambda_k=\lambda\circ f_k\colon X_k\longrightarrow B\pi.
\]

The smooth normal cobordism realizing \(ka\), regarded as a
topological normal cobordism, has the same surgery obstruction and
the same \(G\)-signature defect.  Therefore
\cite[Equation~(1.2) and Remark~1.7]{CrowleyMacko}, applied with
reference map \(\lambda\), gives
\begin{equation}\label{eq:criterion-rho-sequence}
\rho(X_k,\lambda_k)
=
\rho(X,\lambda)
+
k\,\widehat\sigma_{d,\pi}(a).
\end{equation}
Set \(v_k=\rho(X_k,\lambda_k)\).  Since
\(\widehat\sigma_{d,\pi}(a)\neq0\), the elements \(v_k\), \(k\in\Z\),
are pairwise distinct.

Since \(\pi\) is finite, the group
\(\Gamma_\pi=\Aut(\pi)\times\{\pm1\}\) is finite.  It acts on
\(\widehat R^{(-1)^d}_{\Q}(\pi)\) by
\[
(\alpha,\varepsilon)\cdot v
=
\varepsilon\,\alpha^*v.
\]
Every \(\Gamma_\pi\)-orbit is finite, whereas
\(\{v_k\mid k\in\Z\}\) is infinite.  Hence this set meets infinitely
many distinct \(\Gamma_\pi\)-orbits.  Choose an infinite subset
\(J\subset\Z\) containing at most one index from each such orbit.

We claim that the manifolds \(\{X_j\}_{j\in J}\) are pairwise not
topologically \(h\)-cobordant.  Suppose that a topological
\(h\)-cobordism \((W;X_i,X_j)\) exists, and let
\[
\iota_i\colon X_i\longrightarrow W,
\qquad
\iota_j\colon X_j\longrightarrow W
\]
denote the boundary inclusions.

Since \(\iota_i\) is a homotopy equivalence and \(X_i\) is orientable,
the identity
\[
\iota_i^*w_1(W)=w_1(X_i)=0
\]
implies \(w_1(W)=0\).  Thus \(W\) is orientable.  Choose a homotopy
inverse \(r_i\colon W\to X_i\) to \(\iota_i\), and define
\[
\nu=\lambda_i\circ r_i\colon W\longrightarrow B\pi.
\]
Then \(\nu\circ\iota_i\simeq\lambda_i\).

The map \(\nu\circ\iota_j\colon X_j\to B\pi\) also induces an
isomorphism on fundamental groups.  After using the marking \(f_j\)
to identify \(\pi_1(X_j)\) with \(\pi\), there is an automorphism
\(\alpha\in\Aut(\pi)\) such that
\[
\nu\circ\iota_j
\simeq
B\alpha\circ\lambda_j.
\]
If basepoints are suppressed, \(\alpha\) is determined only up to an
inner automorphism.  This ambiguity is harmless because inner
automorphisms act trivially on the complex representation ring.

Topological \(h\)-cobordism invariance of the rho invariant over
\(B\pi\), naturality under \(\alpha\), and comparison of the fixed
orientations with the boundary orientations induced by \(W\) give
\[
v_i
=
\varepsilon\,\alpha^*v_j
\]
for some \(\varepsilon\in\{\pm1\}\).  Thus \(v_i\) and \(v_j\) belong
to the same \(\Gamma_\pi\)-orbit.  By the choice of \(J\), this forces
\(i=j\).  Hence the manifolds \(X_j\), \(j\in J\), are pairwise not
topologically \(h\)-cobordant.

For each \(j\in J\), Kervaire's filling theorem supplies a smooth
homotopy \(n\)-sphere \(\Sigma_j\) such that
\[
Y_j:=X_j\#\Sigma_j
\]
bounds a compact contractible smooth \(N\)-manifold \(C_j\)
\cite[Theorem~3]{KervaireHomologySpheres}.  Since \(n\geq5\), the
generalized Poincar\'e theorem implies that every smooth homotopy
\(n\)-sphere is homeomorphic to \(S^n\)
\cite{SmalePoincare}.  Therefore
\[
Y_j
=
X_j\#\Sigma_j
\cong_{\TOP}
X_j\#S^n
\cong_{\TOP}
X_j.
\]
It follows that the \(Y_j\), \(j\in J\), are pairwise not
topologically \(h\)-cobordant.  Moreover, each \(Y_j\) has fundamental
group \(\pi\), and each \(Y_j\) is homotopy equivalent to \(X\).

Applying \cref{thm:boundary-to-interior-transfer} to the family
\(\{C_j\}_{j\in J}\) completes the proof.
\end{proof}

\subsection{The two congruence classes and
\texorpdfstring{\(\SL(2,7)\)}{SL(2,7)}}

The criterion has different representation-theoretic meanings in the
two congruence classes of even dimensions.

\begin{proposition}\label{prop:sl27-all-even}
Let \(\pi=\SL(2,7)\).  For every integer \(d\geq3\),
\[
\widehat R^{(-1)^d}_{\Q}(\pi)\neq0.
\]
More precisely:
\begin{enumerate}[label=\textup{(\roman*)}]
\item if \(d\) is odd, then \(R^-_{\Q}(\pi)\neq0\);
\item if \(d\) is even, then
      \(R^+_{\Q}(\pi)/\Q\operatorname{reg}_\pi\neq0\).
\end{enumerate}
\end{proposition}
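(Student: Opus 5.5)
The plan is to handle the two parities separately, reducing each to an explicit dimension count in terms of the character table of \(\SL(2,7)\). Complex conjugation permutes the irreducible characters of \(\pi\). On the \(\Q\)-span of the irreducibles, the \(+1\)-eigenspace \(R^+_{\Q}(\pi)\) has dimension equal to the number of conjugation orbits of irreducible characters. The \(-1\)-eigenspace \(R^-_{\Q}(\pi)\) has dimension equal to the number of non-real conjugate pairs \(\{\chi,\overline\chi\}\): for each such pair, \(\chi-\overline\chi\) spans a line in \(R^-_{\Q}\), and real characters contribute only to \(R^+_{\Q}\).

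For case (i), \(d\) odd, we need \(R^-_{\Q}(\pi)\neq0\). This was already shown in the six-dimensional prototype. The pullbacks of \(\chi_3\) and \(\overline\chi_3\) along \(\SL(2,7)\to\PSL(2,7)\) are distinct irreducible characters of \(\pi\). Their values on the order-seven classes are \((-1\pm\sqrt{-7})/2\), which are non-real, so \(\chi_3-\overline\chi_3\) is a nonzero element of \(R^-_{\Q}(\pi)\). Finally, \(\widehat R^-_{\Q}(\pi)\cong R^-_{\Q}(\pi)\) by the identification recorded before \cref{thm:general-criterion}, which completes this case.

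For case (ii), \(d\) even, we need \(\dim_{\Q}R^+_{\Q}(\pi)\geq2\), since the quotient by the single line \(\Q\operatorname{reg}_\pi\) must remain nonzero. The trivial character \(1\) is real and lies in \(R^+_{\Q}\). The sum \(\chi_3+\overline\chi_3\) is also real and lies in \(R^+_{\Q}\). These two are linearly independent, because they are supported on disjoint sets of irreducible characters. So \(\dim R^+_{\Q}(\pi)\geq2\), and \(R^+_{\Q}(\pi)/\Q\operatorname{reg}_\pi\neq0\).

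To make case (ii) concrete, one can take the class of \(1\) itself and check that it is not a rational multiple of \(\operatorname{reg}_\pi\). The regular representation contains every irreducible \(\chi\) with multiplicity \(\chi(1)\), and in particular contains \(\chi_3\) with multiplicity \(3\). A multiple \(c\,\operatorname{reg}_\pi\) with \(c\neq0\) therefore has nonzero \(\chi_3\)-coefficient, whereas \(1\) has \(\chi_3\)-coefficient zero. So \(1\) maps to a nonzero element of the quotient.

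I do not expect a genuine obstacle here. The only external input is the character-table fact from~\cite{Atlas} that \(\chi_3\not\cong\overline\chi_3\) and that both remain irreducible (and distinct) after inflation to \(\SL(2,7)\); inflation along a surjection preserves irreducibility and distinctness. The one point to get right is sign bookkeeping: for \(d\) even, \((-1)^d=+1\) and the relevant space is \(\widehat R^+_{\Q}(\pi)\). The quotient by \(\Q\operatorname{reg}_\pi\) then genuinely removes one dimension, which is why case (ii) needs two independent real classes rather than one.
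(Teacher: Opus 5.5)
Your proposal is correct and follows essentially the same route as the paper: case (i) uses the inflated non-self-conjugate pair $\chi_3,\overline{\chi}_3$ exactly as the paper does. Case (ii) shows that the trivial character is not a rational multiple of $\operatorname{reg}_\pi$; the paper evaluates at a nonidentity element ($1$ versus $0$), while you compare $\chi_3$-coefficients ($0$ versus $3c$), which is an equivalent check, and your extra count $\dim_{\Q}R^+_{\Q}(\pi)\geq2$ together with the appeal to $\widehat R^-_{\Q}(\pi)\cong R^-_{\Q}(\pi)$ is correct but not needed.
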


\begin{proof}
Suppose first that \(d\) is odd.  The character table of
\(\PSL(2,7)\) contains a nonisomorphic complex-conjugate pair
\(\chi_3,\overline{\chi}_3\) of irreducible characters of degree
\(3\).  On the two conjugacy classes of elements of order \(7\), their
values include
\[
\frac{-1+i\sqrt7}{2}
\qquad\text{and}\qquad
\frac{-1-i\sqrt7}{2};
\]
see \cite{Atlas}.  In particular,
\(\chi_3\neq\overline{\chi}_3\).

Inflation along the quotient homomorphism
\(\SL(2,7)\to\PSL(2,7)\) preserves irreducibility and
nonisomorphism.  Thus the inflated characters, denoted by the same
symbols, satisfy
\[
0\neq
\chi_3-\overline{\chi}_3
\in
R^-_{\Q}(\pi).
\]
This proves part \textup{(i)}.

Now suppose that \(d\) is even.  Let \(\mathbf1_\pi\) denote the
trivial character.  Since it is self-conjugate,
\(\mathbf1_\pi\in R^+_{\Q}(\pi)\).  Its class in
\(R^+_{\Q}(\pi)/\Q\operatorname{reg}_\pi\) is nonzero.  Indeed, if
\(\mathbf1_\pi=q\operatorname{reg}_\pi\) for some \(q\in\Q\), then
evaluation at any nonidentity element \(g\in\pi\) would give
\[
1
=
\mathbf1_\pi(g)
=
q\operatorname{reg}_\pi(g)
=
0,
\]
which is impossible.  This proves part \textup{(ii)}.
\end{proof}

\begin{proof}[Proof of \cref{thm:even-main}]
Let \(N=2d\geq6\) be even.  Apply
\cref{thm:general-criterion} to the finite superperfect group
\(\pi=\SL(2,7)\).  The group is superperfect by
\cref{lem:sl27-superperfect}, and the required reduced representation
space is nonzero by \cref{prop:sl27-all-even}.
\end{proof}

\begin{remark}\label{rem:mod-four-difference}
Suppose first that \(N\equiv2\pmod4\).  Then \(d=N/2\) is odd, and
the construction is detected by the negative representation space
\(R^-_{\Q}(\pi)\).  This space is nonzero precisely when complex
conjugation acts nontrivially on the rationalized complex
representation ring.  Thus a non-self-conjugate pair of complex
irreducible representations is required.  For
\(\pi=\SL(2,7)\), the class
\(\chi_3-\overline{\chi}_3\) provides the required nonzero direction.

Suppose instead that \(N\equiv0\pmod4\).  Then \(d\) is even, and
the construction is detected by
\[
R^+_{\Q}(\pi)/\Q\operatorname{reg}_\pi.
\]
For every nontrivial finite group \(\pi\), the trivial character
defines a nonzero element of this quotient.  Hence, in dimensions
divisible by four, the representation-theoretic hypothesis of
\cref{thm:general-criterion} is automatic for every nontrivial finite
superperfect group.
\end{remark}

\section{A general odd-dimensional nonrigidity criterion}
\label{sec:odd-general-criterion}

Let
\[
N=2d+1\geq7,
\qquad
m=N-1=2d\geq6.
\]
The purpose of this section is to separate the geometric and
algebraic inputs of the odd-dimensional construction from the
particular group used later.  The criterion below applies whenever a
group has finite outer automorphism group, admits a suitable
antisimple homology-sphere realization, and has a nonzero reduced
odd-dimensional rational \(L\)-class.

\begin{definition}\label{def:antisimple}
Let \(Y^m\) be a closed oriented even-dimensional manifold and put
\(\pi_Y=\pi_1(Y)\).  We call \(Y\) \emph{antisimple} if
\(C_*(\widetilde Y)\) is chain-homotopy equivalent over
\(\Z\pi_Y\) to a finite complex of finitely generated projective
modules \(P_*\) satisfying
\[
P_{m/2}=0.
\]
This is the algebraic hypothesis needed for the construction of the
absolute higher rho invariant \cite{WeinbergerRho,DFW}.
\end{definition}

The adjective in \cref{def:antisimple} is used in the algebraic sense
of Weinberger and Dranishnikov--Ferry--Weinberger.  We will say
\emph{algebraically antisimple} when a distinction is needed.
Hausmann's handle-theoretic notion of a \(k\)-antisimple manifold,
used only in \cref{sec:dimension-seven}, is stronger and will always
be identified explicitly.

Let \(L^*(\Z\pi)\) denote symmetric \(L\)-theory.  We first
record the comparison needed to use topological bordisms in the
absolute higher-rho construction while retaining the smooth bordism
calculation used later.

\begin{lemma}[Rational comparison of smooth and topological oriented bordism]
\label{lem:MSO-MSTOP-comparison}
For every connected CW complex \(Z\), the forgetful homomorphism
\begin{equation}\label{eq:MSO-MSTOP-isomorphism}
\Omega_n^{SO}(Z)\otimes\Q
\longrightarrow
\Omega_n^{\STOP}(Z)\otimes\Q
\end{equation}
is an isomorphism.  Moreover, the symmetric-signature transformations
fit into a commutative diagram
\begin{equation}\label{eq:MSO-MSTOP-signature-diagram}
\begin{tikzcd}[column sep=large]
\Omega_n^{SO}(Z)\otimes\Q
  \arrow[r,"\sigma^*_{SO}"]
  \arrow[d,"\cong"']
& L^n(\Z[\pi_1(Z)])\otimes\Q
  \arrow[d,equal] \\
\Omega_n^{\STOP}(Z)\otimes\Q
  \arrow[r,"\sigma^*_{\STOP}"']
& L^n(\Z[\pi_1(Z)])\otimes\Q.
\end{tikzcd}
\end{equation}
Consequently, the smooth and topological symmetric-signature images
in rational symmetric \(L\)-theory coincide.
\end{lemma}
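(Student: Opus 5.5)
The plan is to work homotopy-theoretically with the Thom spectra \(MSO\) and \(MSTOP\), using the generalized-homology description of bordism of \(Z\). By the Pontryagin--Thom construction in the smooth case, and topological transversality (Kirby--Siebenmann, Quinn) in the topological case,
\[
\Omega_n^{SO}(Z)\cong MSO_n(Z),
\qquad
\Omega_n^{\STOP}(Z)\cong MSTOP_n(Z).
\]
The forgetful homomorphism is induced by the map of ring spectra \(MSO\to MSTOP\) coming from \(BSO\to BSTOP\). Rationalization is exact, so rational homology theories are determined by their coefficients: for any connective spectrum \(E\),
\[
E_*(Z)\otimes\Q\cong H_*(Z;\Q)\otimes_{\Q}\bigl(\pi_*E\otimes\Q\bigr)
\]
naturally in \(E\), via the rational Chern--Dold character. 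It therefore suffices to show that \(\Omega_*^{SO}\otimes\Q\to\Omega_*^{\STOP}\otimes\Q\) is an isomorphism on coefficients.

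For the coefficients, I will use that the fiber \(\STOP/SO=\TOP/O\) has finite homotopy groups: \(\pi_3(\TOP/O)=\Z/2\), and the higher groups are the finite groups \(\Theta_k\) of homotopy spheres. Hence \(BSO\to BSTOP\) is a rational equivalence, and so is the induced map of Thom spectra, by the Thom isomorphism in rational homology. It follows that \(\pi_*MSO\otimes\Q\to\pi_*MSTOP\otimes\Q\) is an isomorphism, and the first claim follows from the previous paragraph.

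For the diagram, I will use that the symmetric signature \(\sigma^*\colon\Omega_n^{\STOP}(Z)\to L^n(\Z[\pi_1(Z)])\) (Mishchenko, Ranicki) is defined for topological manifolds using the cellular or singular chain complex of the cover induced from \(\widetilde Z\), together with Poincaré duality. For a smooth manifold, this is the same symmetric Poincaré complex as the one obtained from a smooth triangulation, up to homotopy equivalence. Hence \(\sigma^*_{\STOP}\circ(\text{forget})=\sigma^*_{SO}\) holds already integrally, and tensoring with \(\Q\) gives commutativity of the square. Together with surjectivity of the vertical map, this shows that the two image subgroups coincide.

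The main obstacle is the topological input: the identification of \(\Omega^{\STOP}\) with \(\pi_*MSTOP\), which requires topological transversality including dimension \(4\), and the finiteness of \(\pi_*(\TOP/O)\). I will cite these, from Kirby--Siebenmann and Freedman--Quinn, rather than reprove them.
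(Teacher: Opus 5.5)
Your proposal is correct and follows the same route as the paper. The rational equivalence \(BSO\to B\STOP\), which you justify via finiteness of \(\pi_*(\TOP/O)\), gives a rational equivalence \(MSO\to M\STOP\) and hence the bordism isomorphism, and the square commutes already integrally because the symmetric Poincar\'e complex does not see the smooth structure. You are more explicit than the paper about the Pontryagin--Thom identification via topological transversality and the reduction to coefficients via the Chern--Dold character, but these are precisely the steps the paper's proof leaves implicit.
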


\begin{proof}
Stable smoothing theory implies that the inclusion
\[
BSO\longrightarrow B\STOP
\]
is a rational homotopy equivalence.  Equivalently, the induced map of
oriented Thom spectra
\[
MSO\longrightarrow M\STOP
\]
is a rational equivalence; see
\cite{KirbySiebenmann,MadsenMilgram}.  After smashing with
\(Z_+\), taking stable homotopy groups, and tensoring with \(\Q\), one
obtains \eqref{eq:MSO-MSTOP-isomorphism}.

The symmetric signature is defined from the symmetric Poincar\'e chain
complex of the universal cover.  Forgetting a smooth structure does
not change this chain-level symmetric Poincar\'e complex.  Naturality
of the symmetric signature therefore gives
\eqref{eq:MSO-MSTOP-signature-diagram}, and the final assertion
follows.
\end{proof}

Define
\begin{equation}\label{eq:odd-reduced-target}
\cR_N(\pi)
:=
\operatorname{coker}\!\left(
\Omega_N^{\STOP}(B\pi)\otimes\Q
\xrightarrow{\ \sigma^*\ }
L^N(\Z\pi)\otimes\Q
\right).
\end{equation}
By \cref{lem:MSO-MSTOP-comparison}, this is canonically isomorphic to
the cokernel obtained by replacing \(\Omega_N^{\STOP}\) with
\(\Omega_N^{SO}\).  In particular, all smooth-bordism calculations
below compute the target in \eqref{eq:odd-reduced-target}.
Write
\[
q_\pi\colon
L^N(\Z\pi)\otimes\Q
\longrightarrow
\cR_N(\pi)
\]
for the quotient map.  For \(b\in L_N^s(\Z\pi)\), set
\begin{equation}\label{eq:odd-reduced-symmetrization}
\operatorname{sym}_\pi(b)
:=
q_\pi\bigl(\operatorname{sym}(b)\otimes1\bigr),
\end{equation}
where
\(
\operatorname{sym}\colon L_N^s(\Z\pi)\to L^N(\Z\pi)
\)
denotes the composite
\begin{equation}\label{eq:odd-decoration-comparison}
\begin{aligned}
L_N^s(\Z\pi)
=L_N^{\langle2\rangle}(\Z\pi)
&\longrightarrow
L_N^h(\Z\pi)
=L_N^{\langle1\rangle}(\Z\pi)\\
&\longrightarrow
L_N^p(\Z\pi)
=L_N^{\langle0\rangle}(\Z\pi)
\xrightarrow{\ 1+T\ }
L^N(\Z\pi).
\end{aligned}
\end{equation}
Here \(L_N^p=L_N^{\langle0\rangle}\) is the projective quadratic
decoration and \(1+T\) is quadratic-to-symmetric symmetrization.
The relative terms in the first two comparison maps are
Tate cohomology groups of algebraic \(K\)-groups and are therefore
killed after inverting \(2\); quadratic-to-symmetric
symmetrization is likewise an isomorphism after inverting \(2\).
Thus every map in \eqref{eq:odd-decoration-comparison} becomes an
isomorphism after tensoring with \(\Q\); see
\cite{RanickiALT,RanickiAG}.  Consequently, if
\(\cR_N(\pi)\neq0\), rational surjectivity followed by clearing
denominators gives an integral Wall class with nonzero image under
\(\operatorname{sym}_\pi\).

We next make explicit the algebraic truncation used below.  Put
\(d=m/2\).  If \(P_*\) is a finite projective symmetric Poincar\'e
chain model for \(C_*(\widetilde Y)\) with \(P_d=0\), define its
brutal truncation by
\begin{equation}\label{eq:odd-brutal-truncation}
\bigl(P^{<d}\bigr)_j
:=
\begin{cases}
P_j,&j<d,\\
0,&j\geq d.
\end{cases}
\end{equation}
The vanishing of \(P_d\) makes the projection
\[
r_P:P_*\longrightarrow P^{<d}_*
\]
a chain retraction.  The transported symmetric Poincar\'e structure
on \(P_*\) equips \(r_P\) with the structure of an
\(N\)-dimensional symmetric algebraic Poincar\'e pair whose boundary
is \(P_*\).  We denote this pair by \(\mathcal T(P)\).  The
construction is natural up to symmetric algebraic bordism under chain
homotopy equivalences of antisimple projective models
\cite[Proposition~2.26]{DFW}.

\begin{lemma}[Algebraic cancellation across an \(h\)-cobordism]
\label{lem:absolute-rho-hcob-cancellation}
Let \((U;Y_0,Y_1)\) be an oriented topological \(h\)-cobordism over
\(B\pi\), with
\(\partial U=(-Y_0)\sqcup Y_1\), between closed oriented antisimple
\(m\)-manifolds.  Choose projective models \(P_i\) and identify them
by transporting \(P_0\) across the chain equivalence induced by
\(U\).  If
\(\mathcal C(U;P_0,P_1)\) denotes the resulting symmetric algebraic
Poincar\'e triad, then
\begin{equation}\label{eq:absolute-rho-relative-cancellation}
\mathcal T(P_1)
\cup_{P_1}
\bigl(-\mathcal C(U;P_0,P_1)\bigr)
\ \sim_{\mathrm{sym}}\
\mathcal T(P_0)
\end{equation}
as symmetric Poincar\'e pairs with boundary \(P_0\).

Consequently, if \(V_0\) is a topological null-bordism of \(Y_0\)
over \(B\pi\) and \(V_1=V_0\cup_{Y_0}U\), then, for these compatible
choices,
\begin{equation}\label{eq:absolute-rho-hcob-upstairs}
\bigl[
\mathcal T(P_1)\cup-\mathcal C(V_1,Y_1)
\bigr]
=
\bigl[
\mathcal T(P_0)\cup-\mathcal C(V_0,Y_0)
\bigr]
\quad\text{in }L^N(\Z\pi).
\end{equation}
\end{lemma}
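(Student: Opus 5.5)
The plan is to work entirely in the category of symmetric algebraic Poincaré pairs and triads over \(\Z\pi\), and to reduce \eqref{eq:absolute-rho-relative-cancellation} to the fact that an \(h\)-cobordism carries a chain retraction onto either end.

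\textbf{Step 1: the algebraic \(h\)-cobordism is a cylinder.} Since \(U\) is an \(h\)-cobordism, both inclusions \(C_*(\widetilde Y_i)\to C_*(\widetilde U)\) are chain equivalences. Transporting along them, we replace the triad \(\mathcal C(U;P_0,P_1)\), up to symmetric homotopy equivalence of triads, by an algebraic cylinder on \(P_0\). Its underlying chain complex is \(P_0\), the two boundary maps are \(\id\colon P_0\to P_0\) and the transported equivalence \(P_1\simeq P_0\), and its symmetric structure is the one induced from the fundamental class of \(U\). This is the step where the chosen identification of \(P_1\) with \(P_0\) is used.

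\textbf{Step 2: glue along \(P_1\).} The union \(\mathcal T(P_1)\cup_{P_1}(-\mathcal C)\) is then, up to homotopy, the pair obtained by composing the retraction \(r_{P_1}\) with the cylinder. Because \(P_1\) is identified with \(P_0\) by the transported equivalence, the truncation \(P_1^{<d}\) corresponds to \(P_0^{<d}\). By the naturality of \(\mathcal T\) under chain equivalences of antisimple models \cite[Proposition~2.26]{DFW}, the glued pair is therefore cobordant, rel boundary \(P_0\), to \(\mathcal T(P_0)\). To make this precise, I would construct an explicit symmetric Poincaré triad whose pieces are \(\mathcal T(P_0)\times I\) together with the cylinder. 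Its boundary then exhibits the relation \eqref{eq:absolute-rho-relative-cancellation}.

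\textbf{Step 3: the closed classes.} For the consequence \eqref{eq:absolute-rho-hcob-upstairs}, we use \(\mathcal C(V_1,Y_1)=\mathcal C(V_0,Y_0)\cup_{P_0}\mathcal C(U;P_0,P_1)\) as symmetric pairs, by the additivity of symmetric chain models under gluing of manifolds along boundary components. Substituting gives
\[
\mathcal T(P_1)\cup-\mathcal C(V_1,Y_1)
\sim
\bigl(\mathcal T(P_1)\cup_{P_1}-\mathcal C(U)\bigr)\cup_{P_0}-\mathcal C(V_0,Y_0).
\]
Applying \eqref{eq:absolute-rho-relative-cancellation} to the inner term and using that gluing of Poincaré pairs along a common boundary respects cobordism rel boundary yields equality of the classes in \(L^N(\Z\pi)\).

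The main obstacle is Step 2. There one has to check that the symmetric structure on the truncation, which is transported from \(P_1\), matches the structure transported from \(P_0\) through the cylinder. The point is that the symmetric structure on \(U\) restricts to the two boundary structures with opposite signs, so the homotopy of symmetric structures must be tracked explicitly. I would handle this using Ranicki's union/gluing formulas \cite{RanickiALT}. Orientation signs, coming from \(\partial U=(-Y_0)\sqcup Y_1\), are the likely place for errors.
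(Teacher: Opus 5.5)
Your proposal is correct and takes essentially the paper's route. The paper also identifies \(\mathcal C(U;P_0,P_1)\) with the algebraic mapping cylinder \(\mathcal M(f_P)\) of the boundary equivalence. It then uses \((P_i)_d=0\) to restrict \(f_P\) and its inverse homotopies below degree \(d\), which gives an equivalence \(F\colon\mathcal T(P_1)\simeq\mathcal T(P_0)\). Your Step~2 triad is realized explicitly by gluing the mapping cylinder of \(F\) to the mapping-cylinder bordism between \(\mathcal C(U;P_0,P_1)\) and \(\mathcal M(f_P)\) and absorbing the \(P_1\)-collar, and the paper concludes exactly as in your Step~3.

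The structure-matching issue you flag as the main obstacle is harmless for degree reasons. Since \(P^{<d}\otimes_{\Z\pi}P^{<d}\) is concentrated in degrees \(\leq m-2\), the symmetric data on the truncated side of \(\mathcal T(P)\) vanishes in degrees \(m+1\) and \(m+2\). Hence any homotopy of boundary structures, in particular the one supplied by \(U\), extends to an equivalence of truncation pairs.
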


\begin{proof}
Choose a homotopy inverse \(r_0:U\to Y_0\) of the inclusion of
\(Y_0\), and put \(f=r_0|_{Y_1}:Y_1\to Y_0\).  After choosing the
standard homotopies to the reference maps, \(f\) is an
orientation-preserving homotopy equivalence over \(B\pi\).
After replacing the boundary complexes by the chosen projective
models, write \(K\) for the middle complex of the symmetric
Poincar\'e triad of \(U\), and write
\[
j_i:P_i\longrightarrow K
\qquad(i=0,1)
\]
for its two boundary maps.  Both lifted relative chain complexes
\[
C_*(\widetilde U,\widetilde Y_0)
\qquad\text{and}\qquad
C_*(\widetilde U,\widetilde Y_1)
\]
are contractible, so both \(j_i\) are chain equivalences.  Choose a
chain-homotopy inverse to \(j_0\); its composite with \(j_1\) is the
chain equivalence induced by \(f\).  The standard homotopies exhibit
the diagram
\[
P_0\xrightarrow{j_0}K\xleftarrow{j_1}P_1
\]
as chain-homotopy equivalent, relative to its two ends, to the
algebraic mapping-cylinder diagram of \(f\).  Transport the full
symmetric Poincar\'e-triad structure across this equivalence.  The
relative symmetric construction and its homotopy invariance are given
in \cite[Propositions~6.2--6.3]{RanickiSurgeryII}.  The precise algebraic
gluing and Poincar\'e-triad statements used here are
\cite[\S1.7 and \S2.1, especially Proposition~2.1.1]{RanickiExact}; compare also
\cite[Definitions~1.6--1.7 and Proposition~1.13]{RanickiALT}.

Use the transported model as \(P_1\), and write
\(f_P:P_1\to P_0\) for the induced chain equivalence.  Since
\((P_i)_d=0\), a chain-homotopy inverse of \(f_P\), together with its
inverse homotopies, restricts below degree \(d\).  Hence
\(f_P^{<d}:P_1^{<d}\to P_0^{<d}\) is a chain equivalence and
\[
\begin{tikzcd}
P_1 \arrow[r,"f_P"] \arrow[d,"r_{P_1}"']
& P_0 \arrow[d,"r_{P_0}"]\\
P_1^{<d} \arrow[r,"f_P^{<d}"']
& P_0^{<d}
\end{tikzcd}
\]
is a chain-homotopy equivalence of truncation pairs.  With the
transported symmetric structures it gives an equivalence
\[
F:\mathcal T(P_1)\simeq\mathcal T(P_0),
\]
well defined up to symmetric bordism by the model-independence
argument in \cite[Proposition~2.26]{DFW}.

Orient the algebraic mapping cylinder of \(F\) as an
\((N+1)\)-dimensional symmetric Poincar\'e triad \(\mathcal B_F\)
whose faces are \(\mathcal T(P_1)\), \(-\mathcal T(P_0)\), and
\(-\mathcal M(f_P)\).  On the other hand, take the oppositely
oriented algebraic mapping-cylinder bordism of the chain-homotopy
equivalence, relative to both corner complexes, between
\(\mathcal C(U;P_0,P_1)\) and \(\mathcal M(f_P)\).  Its horizontal
faces are \(-\mathcal C(U;P_0,P_1)\) and
\(\mathcal M(f_P)\), while its vertical product faces, with their
induced corner orientations, are \(\mathcal M(1_{P_0})\) and
\(-\mathcal M(1_{P_1})\).

Glue the two bordisms along
\(-\mathcal M(f_P)\sqcup\mathcal M(f_P)\).  The face
\(-\mathcal M(1_{P_1})\) is the product collar between the two
\(P_1\)-corners.  Rounding that corner absorbs the collar into
\(\mathcal T(P_1)\cup_{P_1}-\mathcal C(U;P_0,P_1)\); it is not glued
to a second copy of \(\mathcal M(1_{P_1})\).  Algebraic
Poincar\'e-triad gluing therefore gives a bordism \(\mathcal B(U)\)
whose four remaining \(N\)-dimensional faces are
\[
\mathcal T(P_1),
\qquad
-\mathcal C(U;P_0,P_1),
\qquad
-\mathcal T(P_0),
\qquad
\mathcal M(1_{P_0}).
\]
Equivalently, separating its horizontal and vertical boundaries gives
\[
\partial_{\mathrm h}\mathcal B(U)
=
\left(
\mathcal T(P_1)
\cup_{P_1}-\mathcal C(U;P_0,P_1)
\right)
\sqcup\bigl(-\mathcal T(P_0)\bigr),
\qquad
\partial_{\mathrm v}\mathcal B(U)
=
\mathcal M(1_{P_0}).
\]
The vertical face is a product cylinder, so \(\mathcal B(U)\) is a
symmetric bordism of pairs relative to \(P_0\).  This
proves \eqref{eq:absolute-rho-relative-cancellation}.

No simple-contractibility assertion has entered the argument.  If the
Whitehead torsion of \(f\) is nonzero, it remains encoded in the
mapping-cylinder face \(\mathcal M(f_P)\); the same non-simple face
occurs with the opposite boundary orientation when the two triads are
glued and becomes an internal face.  Hence projective symmetric \(L\)-theory
allows an arbitrary \(h\)-cobordism here, not just an
\(s\)-cobordism.

Finally, algebraic gluing gives
\[
\mathcal C(V_1,Y_1)
\simeq
\mathcal C(V_0,Y_0)
\cup_{P_0}
\mathcal C(U;P_0,P_1).
\]
Associativity of algebraic union, followed by
\eqref{eq:absolute-rho-relative-cancellation}, gives
\begin{align*}
\mathcal T(P_1)\cup-\mathcal C(V_1,Y_1)
&\sim_{\mathrm{sym}}
\left(
\mathcal T(P_1)\cup-\mathcal C(U;P_0,P_1)
\right)
\cup-\mathcal C(V_0,Y_0)\\
&\sim_{\mathrm{sym}}
\mathcal T(P_0)\cup-\mathcal C(V_0,Y_0).
\end{align*}
Passing to the closed symmetric Poincar\'e cobordism class proves
\eqref{eq:absolute-rho-hcob-upstairs}.
\end{proof}

\begin{proposition}[Absolute higher rho package]
\label{prop:absolute-rho-package}
Let \(Y^m\) be a closed oriented antisimple manifold, let
\[
\mu\colon Y\longrightarrow B\pi
\]
classify its universal cover, and suppose that \((Y,\mu)\) is
oriented topologically null-bordant over \(B\pi\).  Then there is a class
\[
\rhoabs(Y,\mu)\in\cR_N(\pi)
\]
with the following properties.
\begin{enumerate}[label=\textup{(\roman*)}]
\item If \(\alpha\in\Aut(\pi)\), then
\begin{equation}\label{eq:absolute-rho-naturality}
\rhoabs(Y,B\alpha\circ\mu)
=
\alpha_*\rhoabs(Y,\mu).
\end{equation}
The action factors through \(\Out(\pi)\).

\item Orientation reversal changes the sign:
\begin{equation}\label{eq:absolute-rho-orientation}
\rhoabs(-Y,\mu)=-\rhoabs(Y,\mu).
\end{equation}

\item If an oriented topological \(h\)-cobordism over \(B\pi\) joins
\((Y_0,\mu_0)\) to \((Y_1,\mu_1)\), and one end satisfies the
hypotheses above, then so does the other and
\[
\rhoabs(Y_1,\mu_1)=\rhoabs(Y_0,\mu_0).
\]

\item If \([Y_b,f_b]=\omega(b)\) is obtained from \(Y\) by Wall
realization of \(b\in L_N^s(\Z\pi)\), and
\(\mu_b=\mu\circ f_b\), then \(Y_b\) is antisimple and
null-bordant over \(B\pi\), and
\begin{equation}\label{eq:odd-rho-wall-package}
\rhoabs(Y_b,\mu_b)-\rhoabs(Y,\mu)
=
\operatorname{sym}_\pi(b)
\quad\text{in }\cR_N(\pi).
\end{equation}
\end{enumerate}
\end{proposition}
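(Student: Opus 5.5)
The plan is to define \(\rhoabs\) by the Weinberger/Dranishnikov--Ferry--Weinberger recipe and then check the four properties one at a time. Since \(Y\) is antisimple, fix a projective symmetric Poincar\'e model \(P_*\) of \(C_*(\widetilde Y)\) with \(P_d=0\). This gives the truncation pair \(\mathcal T(P)\) of \eqref{eq:odd-brutal-truncation}, whose boundary is \(P_*\). Next choose a topological null-bordism \((V,\partial V=Y)\) over \(B\pi\), and let \(\mathcal C(V,Y)\) be its symmetric Poincar\'e pair, with boundary identified with \(P_*\). Gluing along \(P_*\) produces a closed \(N\)-dimensional symmetric complex \(\mathcal T(P)\cup_P-\mathcal C(V,Y)\), and we set
\[
\rhoabs(Y,\mu):=q_\pi\bigl[\mathcal T(P)\cup_P-\mathcal C(V,Y)\bigr]\in\cR_N(\pi).
\]
Two kinds of choices must be shown not to matter.
\begin{itemize}
\item Independence of \(P\) follows from the model-independence of \(\mathcal T(P)\) up to symmetric bordism \cite[Proposition~2.26]{DFW}.
\item For independence of \(V\): given two null-bordisms \(V,V'\), the two glued classes differ by the symmetric signature of the closed manifold \(V\cup_Y-V'\) over \(B\pi\). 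That term lies in the image of \(\sigma^*\colon\Omega_N^{\STOP}(B\pi)\to L^N(\Z\pi)\), so it vanishes in the cokernel \(\cR_N(\pi)\). This is exactly why the target is the cokernel.
\end{itemize}

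Properties (i) and (ii) should then be formal. Changing the reference map by \(B\alpha\) changes the coefficient ring by \(\alpha\) in every complex, and push-forward \(\alpha_*\) preserves both the symmetric-signature image and the gluing, so \(\alpha_*\) descends to \(\cR_N(\pi)\). Inner automorphisms act trivially on \(L^N(\Z\pi)\), which gives the factorization through \(\Out(\pi)\). Reversing orientation negates every symmetric structure, including the one on \(\mathcal T(P)\) and on a null-bordism of \(-Y\) (namely \(-V\)), which gives (ii).

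For (iii), let \(U\) be the \(h\)-cobordism. Its boundary inclusions are homotopy equivalences, so the two chain complexes are chain equivalent and antisimplicity of one end transfers to the other; \(V_1=V_0\cup_{Y_0}U\) shows that null-bordance also transfers. The equality of the invariants is then precisely \eqref{eq:absolute-rho-hcob-upstairs} of \cref{lem:absolute-rho-hcob-cancellation}, after applying \(q_\pi\).

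For (iv), Wall realization of \(b\) gives a normal bordism \((Z;Y,Y_b)\) whose map \(\partial_1Z=Y_b\to Y\) is a simple homotopy equivalence. Hence \(Y_b\) is antisimple, with model transported via \(f_b\), and \(V_b=V\cup_Y Z\) is a null-bordism of \(Y_b\). Here I would argue as in \cref{lem:absolute-rho-hcob-cancellation}, but with \(Z\) in place of \(U\). The difference between the two glued classes becomes the class of \(-\mathcal C(Z)\) glued to the mapping cylinder of the induced chain equivalence \(P_b\to P\). Comparing this with the normal map \(Z\to Y\times I\), it should equal the symmetric signature of \(Y\times I\) relative to its boundary (which is trivial) minus the symmetrization of the quadratic surgery obstruction, i.e.\ \((1+T)\) applied to the image of \(b\). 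Decoration changes are rational isomorphisms by \eqref{eq:odd-decoration-comparison}, so this gives \eqref{eq:odd-rho-wall-package}.

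I expect the main obstacle to be that step: identifying the glued symmetric complex with the symmetrized quadratic kernel. It requires Ranicki's relation between the symmetric signature of a degree-one normal map and \((1+T)\) of its quadratic signature, \(\sigma^*(Z)-\sigma^*(Y\times I)=(1+T)\sigma_*(\text{normal map})\), in relative form. One must also track sign conventions so that the result is \(+\operatorname{sym}_\pi(b)\) and not its negative.
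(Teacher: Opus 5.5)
Your proposal is correct and follows essentially the same route as the paper. Both define \(\rhoabs\) by gluing the truncation pair \(\mathcal T(P)\) to \(-\mathcal C(V,Y)\) and projecting by \(q_\pi\), with independence of \(V\) coming from the cokernel definition of \(\cR_N(\pi)\). Both obtain (iii) from the cancellation lemma using \(V_1=V_0\cup_{Y_0}U\), and (iv) by gluing the realizing normal cobordism to \(V\) and identifying the defect with the symmetrized quadratic kernel, as in \cite[Proposition~2.26]{DFW}. The paper settles the sign issue you flag by convention: it orients the normal cobordism so that the ``\(Y_b\)-end minus \(Y\)-end'' defect is \(+\operatorname{sym}(b)\).
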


\begin{proof}
Choose a finite projective chain model \(P_*\) for
\(C_*(\widetilde Y)\) with \(P_{m/2}=0\), and transport the symmetric
Poincar\'e structure of \(Y\) to \(P_*\).  The retraction
\eqref{eq:odd-brutal-truncation} gives the symmetric algebraic
Poincar\'e pair \(\mathcal T(P)\) with algebraic boundary the symmetric
complex of \(Y\).

Choose a compact oriented topological null-bordism \(V\) of \(Y\)
together with an extension \(\overline\mu:V\to B\pi\) of \(\mu\).
Its existence is precisely the null-bordism hypothesis.  The use of a
topological, rather than necessarily smooth, null-bordism agrees with
the definition of the target \eqref{eq:odd-reduced-target}.  Let
\(\mathcal C(V,Y)\) denote the symmetric algebraic Poincar\'e pair of
\((V,Y)\), with coefficients induced by \(\overline\mu\).  Define
\begin{equation}\label{eq:absolute-rho-representative}
\rho(P,V)
:=
\bigl[
\mathcal T(P)\cup_{\partial}-\mathcal C(V,Y)
\bigr]
\in L^N(\Z\pi)\otimes\Q
\end{equation}
and
\begin{equation}\label{eq:absolute-rho-definition}
\rhoabs(Y,\mu)
:=q_\pi\bigl(\rho(P,V)\bigr).
\end{equation}
This is Weinberger's absolute higher rho invariant.  The construction
originates in \cite{WeinbergerRho}; the algebraic truncation and the
Wall-realization calculation used here are given in the proof of
\cite[Proposition~2.26]{DFW}.  Its
\cite[Remark~2.27]{DFW} concerns the separate extension in which an
actual null-bordism is not assumed and is not used here.

A chain equivalence between two antisimple projective models identifies
the corresponding truncation pairs up to symmetric algebraic bordism.
If \(V_0\) and \(V_1\) are two topological null-bordisms, the
algebraic gluing theorem gives
\begin{equation}\label{eq:absolute-rho-bordism-change}
\rho(P,V_0)-\rho(P,V_1)
=
\sigma^*\bigl(
[V_0\cup_Y(-V_1)\longrightarrow B\pi]
\bigr)
\end{equation}
in \(L^N(\Z\pi)\otimes\Q\).  The closed manifold on the right
represents an element of \(\Omega_N^{\STOP}(B\pi)\).  Hence this
difference vanishes after passage to \(\cR_N(\pi)\), so
\eqref{eq:absolute-rho-definition} is independent of all choices.  If
the chosen null-bordisms are smooth, the same conclusion follows from
\cref{lem:MSO-MSTOP-comparison}.

Change of coefficients along an automorphism of \(\pi\) is compatible
with truncation, algebraic gluing, and \(q_\pi\), proving
\eqref{eq:absolute-rho-naturality}.  Inner automorphisms induce
canonically chain-isomorphic coefficient systems, so the action
factors through \(\Out(\pi)\).  Reversing the orientation negates the
symmetric structures in \eqref{eq:absolute-rho-representative}, which
proves \eqref{eq:absolute-rho-orientation}.

Now let \((U;Y_0,Y_1)\) be an oriented topological \(h\)-cobordism
over \(B\pi\), with reference map \(\nu:U\to B\pi\).  Choose a
compact oriented topological null-bordism \(V_0\) of \((Y_0,\mu_0)\)
and put
\[
V_1=V_0\cup_{Y_0}U.
\]
Then \(V_1\) is a compact oriented topological null-bordism of
\((Y_1,\mu_1)\).  Choose an antisimple projective model \(P_0\) for
\(Y_0\).  Transporting it across either boundary equivalence gives an
antisimple model \(P_1\) for \(Y_1\).  By
\cref{lem:absolute-rho-hcob-cancellation},
\[
\rho(P_1,V_1)=\rho(P_0,V_0)
\quad\text{in }L^N(\Z\pi)\otimes\Q.
\]
Applying \(q_\pi\) proves topological \(h\)-cobordism invariance.
The topological null-bordism \(V_1\) is admissible because
\eqref{eq:odd-reduced-target} is defined using topological oriented
bordism.  By
\cref{lem:MSO-MSTOP-comparison}, this target agrees with the smooth
bordism target computed in the subsequent sections.

Finally, let a normal cobordism
\[
(T_b;Y_b,Y)\longrightarrow Y\times I
\]
realize \(b\in L_N^s(\Z\pi)\).  Orient and order its boundary so that
 \(\partial T_b=Y_b\sqcup(-Y)\), the quadratic surgery kernel
represents \(b\), and the symmetric signature defect
``\(Y_b\)-end minus \(Y\)-end'' is \(+\operatorname{sym}(b)\).
Glue its \(Y\)-boundary to \(V\), obtaining a null-bordism \(V_b\) of
\((Y_b,\mu\circ f_b)\).  Transport the projective model \(P\) along
the simple homotopy equivalence \(f_b\); this also shows that \(Y_b\)
is antisimple.  The symmetrization of the quadratic surgery kernel is
the symmetric-signature defect of the normal cobordism.  Hence the
algebraic gluing formula gives
\begin{equation}\label{eq:absolute-rho-wall-upstairs}
\rho(P_b,V_b)-\rho(P,V)
=
\operatorname{sym}(b)\otimes1
\quad\text{in }L^N(\Z\pi)\otimes\Q.
\end{equation}
This is the Wall-realization calculation in the final paragraph of
the proof of \cite[Proposition~2.26]{DFW}.  Applying \(q_\pi\) proves
\eqref{eq:odd-rho-wall-package}.
\end{proof}

\begin{theorem}[Odd-dimensional nonrigidity criterion]
\label{thm:odd-criterion}
Let \(N=2d+1\geq7\), put \(m=N-1\), and let \(\pi\) be a finitely
presented group.  Assume the following.
\begin{enumerate}[label=\textup{(\roman*)}]
\item \(\Out(\pi)\) is finite.

\item There are a smooth oriented integral homology \(m\)-sphere
\(X\), a compact smooth oriented \(N\)-manifold \(V\), and a map
\[
\overline\lambda\colon V\longrightarrow B\pi
\]
such that \(\partial V=X\), the restriction
\(\lambda=\overline\lambda|_X\) classifies the universal cover of
\(X\), the inclusion \(X\hookrightarrow V\) induces an isomorphism
on fundamental groups, and \(X\) is antisimple.

\item The reduced rational target is nonzero:
\[
\cR_N(\pi)\neq0.
\]
\end{enumerate}
Then there is an infinite family of compact contractible smooth
\(N\)-manifolds \(\{C_j\}_{j\in J}\) such that their interiors are
all properly homotopy equivalent but pairwise nonhomeomorphic.  Their
boundaries may be chosen to be homotopy equivalent integral homology
\(m\)-spheres that are pairwise not topologically
\(h\)-cobordant.
\end{theorem}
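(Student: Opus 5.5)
The plan mirrors the proof of \cref{thm:general-criterion}, with the finite-group rho invariant replaced by $\rhoabs$ from \cref{prop:absolute-rho-package} and the finiteness of $\Aut(\pi)$ replaced by finiteness of $\Out(\pi)$.

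\textbf{Step 1: a Wall class with nonzero image.} Hypothesis (iii) gives $\cR_N(\pi)\neq0$. Each map in \eqref{eq:odd-decoration-comparison} is a rational isomorphism, so $q_\pi\circ(\operatorname{sym}\otimes\Q)$ is surjective onto $\cR_N(\pi)$. After clearing denominators we obtain $b\in L_N^s(\Z\pi)$ with $\operatorname{sym}_\pi(b)\neq0$.

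\textbf{Step 2: the family of structures.} By hypothesis (ii), $(X,\lambda)$ is antisimple. It is also oriented null-bordant over $B\pi$ via $(V,\overline\lambda)$: this bordism is smooth, hence topological. So $\rhoabs(X,\lambda)$ is defined. For $k\in\Z$, let $[X_k,f_k]=\omega(kb)\in\cS^s_{\DIFF}(X)$, oriented so that $f_k$ has degree $+1$, and put $\lambda_k=\lambda\circ f_k$.

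Each $X_k$ is a smooth integral homology $m$-sphere with $\pi_1(X_k)\cong\pi$, and all the $X_k$ are homotopy equivalent to $X$. By \cref{prop:absolute-rho-package}(iv), each $X_k$ is antisimple and null-bordant over $B\pi$, and
\[
v_k:=\rhoabs(X_k,\lambda_k)=\rhoabs(X,\lambda)+k\,\operatorname{sym}_\pi(b).
\]
Since $\cR_N(\pi)$ is a $\Q$-vector space and $\operatorname{sym}_\pi(b)\neq0$, the $v_k$ are pairwise distinct.

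\textbf{Step 3: selecting the index set.} The group $\Out(\pi)\times\{\pm1\}$ is finite by hypothesis (i). It acts on $\cR_N(\pi)$ through parts (i) and (ii) of \cref{prop:absolute-rho-package}. Every orbit is finite while $\{v_k\}$ is infinite, so we can choose an infinite $J\subset\Z$ whose indices lie in pairwise distinct orbits.

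\textbf{Step 4: ruling out $h$-cobordisms.} This is the main obstacle. Suppose $(W;X_i,X_j)$ is a topological $h$-cobordism.
\begin{enumerate}[label=\textup{(\alph*)}]
\item As in the even case, $W$ is orientable.
\item Set $\nu=\lambda_i\circ r_i$, where $r_i$ is a homotopy inverse of the inclusion $X_i\hookrightarrow W$.
\item Then $\nu|_{X_j}\simeq B\alpha\circ\lambda_j$ for some $\alpha\in\Aut(\pi)$, well defined up to inner automorphisms.
\item Orient $W$ so that one end carries its given orientation. The other end then carries $\varepsilon X_j$ for some sign $\varepsilon\in\{\pm1\}$.
\item Part (iii) of \cref{prop:absolute-rho-package} applies because $X_i$ is null-bordant. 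It gives
\[
v_i=\rhoabs(\varepsilon X_j,B\alpha\circ\lambda_j)=\varepsilon\,\alpha_*v_j.
\]
\end{enumerate}
So $v_i$ and $v_j$ lie in the same orbit, which forces $i=j$.

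The care needed here is that the reference map on the second end is only matched up to automorphism and homotopy. We must check that $\rhoabs$ depends only on the homotopy class of the reference map; this follows because homotopic maps are bordant over $B\pi$ via the cylinder.

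\textbf{Step 5: filling and transfer.} Apply Kervaire's filling theorem \cref{thm:kervaire-filling}, valid since $m\geq6\geq5$. It gives homotopy spheres $\Sigma_j$ such that $Y_j=X_j\#\Sigma_j$ bounds a compact contractible smooth $C_j$. By the generalized Poincar\'e theorem, $Y_j\cong_{\TOP}X_j$. Hence the $Y_j$ are pairwise homotopy equivalent integral homology spheres that are pairwise not topologically $h$-cobordant. Finally, \cref{thm:boundary-to-interior-transfer} yields that the interiors $\Int(C_j)$ are mutually properly homotopy equivalent and pairwise nonhomeomorphic.
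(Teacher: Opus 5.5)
Your proposal is correct and follows the paper's proof essentially step for step: you lift a nonzero class of $\cR_N(\pi)$ to an integral Wall class, obtain the affine formula from Wall realization and \cref{prop:absolute-rho-package}(iv), keep one index per $\Out(\pi)\times\{\pm1\}$-orbit, rule out $h$-cobordisms via $\nu=\lambda_i\circ r_i$, and finish with Kervaire filling and \cref{thm:boundary-to-interior-transfer}. Your extra remark that $\rhoabs$ depends only on the homotopy class of the reference map is a point the paper leaves implicit, but phrase it as the cylinder $X\times I$ being a product $h$-cobordism over $B\pi$, so that part (iii) applies, because mere bordism over $B\pi$ would not suffice.
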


\begin{proof}
Choose
\[
0\neq\zeta_0\in\cR_N(\pi).
\]
Since \(q_\pi\) is surjective, choose
\[
\eta\in L^N(\Z\pi)\otimes\Q
\qquad\text{with}\qquad
q_\pi(\eta)=\zeta_0.
\]
Rational symmetrization is an isomorphism, so there exists
\[
x\in L_N^s(\Z\pi)\otimes\Q
\]
whose symmetric image is \(\eta\).  Multiplying by a sufficiently
large positive integer clears denominators and gives an integral
class
\[
a\in L_N^s(\Z\pi)
\]
with
\[
\zeta:=\operatorname{sym}_\pi(a)\neq0.
\]
In particular, \(a\) has infinite order.

Apply smooth Wall realization to the base manifold \(X\).  For
\(k\in\Z\), write
\[
[X_k,f_k]=\omega(ka),
\qquad
f_k\colon X_k\longrightarrow X.
\]
Each \(f_k\) is an orientation-preserving simple homotopy equivalence.
Consequently, every \(X_k\) is a smooth integral homology
\(m\)-sphere with fundamental group \(\pi\), all the \(X_k\) are
homotopy equivalent, and each \(X_k\) is antisimple.  Put
\[
\lambda_k=\lambda\circ f_k\colon X_k\longrightarrow B\pi.
\]
The normal cobordism realizing \(ka\), glued to \(V\), gives a
null-bordism of \((X_k,\lambda_k)\) over \(B\pi\).  Therefore
\cref{prop:absolute-rho-package} applies and gives
\begin{equation}\label{eq:odd-rho-variation}
\rhoabs(X_k,\lambda_k)
=
\rhoabs(X,\lambda)+k\zeta
\quad\text{in }\cR_N(\pi).
\end{equation}
Since \(\zeta\neq0\), these values are pairwise distinct.

The finite group
\[
\Out(\pi)\times\{\pm1\}
\]
acts linearly on \(\cR_N(\pi)\), using naturality for the first factor
and orientation reversal for the second.  Every orbit is finite,
whereas the affine set in \eqref{eq:odd-rho-variation} is infinite.
Choose an infinite set \(J\subset\Z\) containing at most one index
from each orbit.

We claim that \(\{X_j\}_{j\in J}\) are pairwise not topologically
\(h\)-cobordant.  Suppose that \((W;X_i,X_j)\) is a topological
\(h\)-cobordism.  Since one boundary inclusion is a homotopy
equivalence and the boundary is orientable, \(W\) is orientable.
Choose a homotopy inverse
\[
r_i\colon W\longrightarrow X_i
\]
to the inclusion of \(X_i\), and define
\[
\nu=\lambda_i\circ r_i\colon W\longrightarrow B\pi.
\]
The restriction to \(X_j\) induces an isomorphism on fundamental
groups.  Relative to the marking \(f_j\), it is homotopic to
\(B\alpha\circ\lambda_j\) for some \(\alpha\in\Aut(\pi)\), unique
up to an inner automorphism.  Topological \(h\)-cobordism invariance,
naturality, and the orientation rule imply
\[
\rhoabs(X_i,\lambda_i)
=
\varepsilon\,\alpha_*\rhoabs(X_j,\lambda_j)
\]
for some \(\varepsilon\in\{\pm1\}\).  Thus the two values lie in the
same \(\Out(\pi)\times\{\pm1\}\)-orbit.  By the choice of \(J\),
this forces \(i=j\).

For each \(j\in J\), Kervaire's filling theorem
\cite[Theorem~3]{KervaireHomologySpheres} supplies a smooth homotopy
\(m\)-sphere \(\Theta_j\) such that
\[
Y_j:=X_j\#\Theta_j
\]
bounds a compact contractible smooth \(N\)-manifold \(C_j\).  Since
\(m\geq5\), the generalized Poincar\'e theorem gives
\(\Theta_j\cong_{\TOP}S^m\) \cite{SmalePoincare}.  Hence
\[
Y_j\cong_{\TOP}X_j.
\]
The manifolds \(Y_j\) are therefore pairwise not topologically
\(h\)-cobordant and are all homotopy equivalent.

The conclusion follows from
\cref{thm:boundary-to-interior-transfer}, applied to
\(\{C_j\}_{j\in J}\).
\end{proof}

\begin{remark}\label{rem:odd-criterion-orbits}
The finiteness of \(\Out(\pi)\) is a convenient sufficient condition.
The proof only requires that the affine set
\[
\{\rhoabs(X,\lambda)+k\zeta\mid k\in\Z\}
\]
meet infinitely many \(\Out(\pi)\times\{\pm1\}\)-orbits.
\end{remark}
\section{The product group \texorpdfstring{\(\Gamma\times\SL(2,7)\)}{Gamma x SL(2,7)}}
\label{sec:odd-product-group}

We now verify the hypotheses of \cref{thm:odd-criterion} for every
odd integer \(N\geq9\).  Write
\[
N=2d+1,
\qquad
m=N-1=2d\geq8.
\]
The same group will work in every such dimension.

Choose a closed oriented hyperbolic integral homology three-sphere
\(H\), and set
\[
\Gamma=\pi_1(H).
\]
Such manifolds exist by Myers' homology-cobordism construction
\cite{MyersHyperbolic}, or see \Cref{prop:existence_of_hyperbolic_3_sphere}.

\begin{proposition}\label{prop:existence_of_hyperbolic_3_sphere}
There exists a closed oriented hyperbolic integral homology
$3$-sphere.
\end{proposition}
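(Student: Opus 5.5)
The plan is to use Dehn surgery on a hyperbolic knot in \(S^3\), combined with Thurston's hyperbolic Dehn surgery theorem. The key observation is that \(1/n\)-surgery on any knot \(K\subset S^3\) yields an integral homology three-sphere. Indeed, the surgered manifold \(S^3_{1/n}(K)\) is obtained by gluing a solid torus to the knot exterior \(E_K\). It has
\[
H_1\bigl(S^3_{1/n}(K);\Z\bigr)\cong \Z/\lvert p\rvert,\qquad p/q=1/n,
\]
which is trivial. By Poincar\'e duality, a closed oriented three-manifold with \(H_1=0\) has the integral homology of \(S^3\). It is orientable because \(E_K\) is orientable and the gluing can be chosen orientation-compatibly.

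First, I fix a hyperbolic knot, concretely the figure-eight knot \(K=4_1\), whose complement \(S^3\setminus K\) admits a complete finite-volume hyperbolic structure (Thurston, via Riley's representation). Second, I invoke Thurston's hyperbolic Dehn surgery theorem: for a one-cusped hyperbolic three-manifold, all but finitely many Dehn fillings yield closed hyperbolic manifolds. The slopes \(1/n\), \(n\in\Z\), are pairwise distinct, so all but finitely many of them give a closed hyperbolic manifold \(H_n=S^3_{1/n}(K)\). Any such \(n\) gives the required \(H\), oriented by the orientation inherited from \(S^3\).

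For the figure-eight knot one can even avoid the qualitative theorem. The exceptional slopes are known to be exactly
\[
\{\infty,0,\pm1,\pm2,\pm3,\pm4\}.
\]
So, for instance, \(S^3_{1/2}(4_1)\) is already hyperbolic. I would cite the qualitative Thurston statement as the primary input and mention this explicit list only as a concrete instance.

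The only genuine input is hyperbolic Dehn filling, which I take as a black box from Thurston's notes. The homology computation is routine, and the step requiring care is making sure the cited filling theorem gives closed hyperbolic manifolds rather than merely some geometric structure. I would also remark that the result is classical and agrees with the Myers reference already cited, so this proposition serves only to make the existence self-contained.
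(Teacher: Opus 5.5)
Your proposal is correct and follows essentially the same route as the paper: the paper takes the figure-eight knot and performs $1/5$-surgery (slope $\mu+5\lambda$). It kills $H_1$ via the relation $[\mu]+5[\lambda]=0$ with $[\lambda]=0$, and gets hyperbolicity from Thurston's explicit analysis of figure-eight surgeries (Theorem~4.7 of his notes). The only difference is that you lead with the qualitative hyperbolic Dehn surgery theorem for all but finitely many slopes $1/n$, whereas the paper fixes $n=5$ and relies on the explicit exceptional-slope analysis, which you also mention as a concrete instance.
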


\begin{proof}
Let $K=4_1\subset S^3$ be the figure-eight knot, and let
\[
M=(S^3\setminus \nu K)(1,5)=S^3_{1/5}(K),
\]
where the filling slope is $\mu+5\lambda$, with $\mu$ and $\lambda$
denoting the meridian and preferred longitude of $K$, respectively.
By Thurston's explicit analysis of Dehn surgeries on the figure-eight
knot, the $(1,5)$-filling is hyperbolic
\cite[Theorem~4.7]{ThurstonGT3M}. Hence $M$ is a closed oriented
hyperbolic $3$-manifold.

On the other hand,
\[
H_1(S^3\setminus \nu K;\mathbb Z)
 \cong \mathbb Z\langle[\mu]\rangle,
\qquad
[\lambda]=0.
\]
Dehn filling along $\mu+5\lambda$ imposes the relation
$[\mu]+5[\lambda]=0$, and therefore $H_1(M;\mathbb Z)=0$. Poincaré
duality and the universal coefficient theorem then give
$H_2(M;\mathbb Z)=0$, while
$H_0(M;\mathbb Z)\cong H_3(M;\mathbb Z)\cong\mathbb Z$. Thus
\[
H_*(M;\mathbb Z)\cong H_*(S^3;\mathbb Z),
\]
so $M$ is an integral homology $3$-sphere.
\end{proof}

Since \(H\) is a closed hyperbolic
three-manifold, it is aspherical.  Therefore
\[
H_i(B\Gamma;\Z)\cong H_i(H;\Z),
\]
and hence
\begin{equation}\label{eq:Gamma-homology}
H_1(\Gamma;\Z)=H_2(\Gamma;\Z)=0,
\qquad
H_3(B\Gamma;\Z)\cong\Z,
\qquad
H_i(B\Gamma;\Z)=0\quad(i>3).
\end{equation}
In particular, \(\Gamma\) is finitely presented, torsion-free,
centerless, and superperfect.

Let
\[
F=\SL(2,7),
\qquad
\pi=\Gamma\times F.
\]
The group \(F\) is finite and superperfect by
\cref{lem:sl27-superperfect}.  The integral K\"unneth theorem and
\eqref{eq:Gamma-homology} give
\[
H_1(\pi;\Z)=H_2(\pi;\Z)=0,
\]
so \(\pi\) is a finitely presented infinite superperfect group.

\begin{lemma}\label{lem:odd-out-finite}
The group \(\Out(\pi)\) is finite.
\end{lemma}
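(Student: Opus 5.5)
We need to show that \(\Out(\Gamma\times F)\) is finite, where \(\Gamma\) is the fundamental group of a closed hyperbolic homology three-sphere and \(F=\SL(2,7)\).

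The first step is to show that every automorphism of \(\pi\) preserves both factors. Identify \(F\) with the subgroup \(1\times F\). It is the unique maximal finite normal subgroup of \(\pi\). Indeed, a finite subgroup \(Q\subset\pi\) projects to a finite subgroup of \(\Gamma\). Since \(\Gamma\) is torsion-free, this projection is trivial, so \(Q\subset F\). Hence every \(\alpha\in\Aut(\pi)\) satisfies \(\alpha(F)=F\).

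The factor \(\Gamma\times1\) is characterized as the centralizer of \(F\) in \(\pi\), up to the center. We have
\[
C_\pi(F)=\Gamma\times Z(F)=\Gamma\times\Z/2 .
\]
So \(\alpha\) preserves \(\Gamma\times Z(F)\). It also preserves the derived subgroup of \(\Gamma\times Z(F)\). Because \(\Gamma\) is perfect by \eqref{eq:Gamma-homology}, that derived subgroup is \(\Gamma\times1\). Thus \(\alpha\) preserves both factors, and the product map
\[
\Aut(\Gamma)\times\Aut(F)\longrightarrow\Aut(\pi)
\]
is surjective; it is clearly injective. Inner automorphisms of \(\pi\) are exactly pairs of inner automorphisms, so
\[
\Out(\pi)\cong\Out(\Gamma)\times\Out(F).
\]

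It remains to show both factors are finite. \(\Out(F)\) is finite because \(\Aut(F)\) is finite. For \(\Gamma\), Mostow rigidity identifies \(\Out(\Gamma)\) with the isometry group of the closed hyperbolic manifold \(H\). That group is a compact Lie group of dimension zero, because a closed negatively curved manifold admits no nonzero Killing fields. Hence it is finite.

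The only delicate point is the characteristic-subgroup step, and the argument relies on it in two places. Torsion-freeness of \(\Gamma\) is needed to recover \(F\). Perfectness of \(\Gamma\), or alternatively its trivial center, is needed to recover \(\Gamma\times1\). Without these hypotheses there could be automorphisms mixing the factors through homomorphisms \(\Gamma\to Z(F)\). Here any homomorphism \(\Gamma\to Z(F)\) vanishes anyway, since \(H_1(\Gamma;\Z)=0\).
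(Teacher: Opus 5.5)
Your proof is correct and follows essentially the paper's route: $F$ is characteristic as the unique maximal finite normal subgroup, the only possible mixing of factors comes from homomorphisms $\Gamma\to Z(F)$, which vanish because $\Gamma$ is perfect, and finiteness then follows from Mostow rigidity and $|F|<\infty$. Your identification of $\Gamma\times1$ as the derived subgroup of $C_\pi(F)=\Gamma\times Z(F)$ spells out the paper's ``central cross-homomorphism'' step and gives the sharper statement $\Out(\pi)\cong\Out(\Gamma)\times\Out(F)$, where the paper only proves an injection.

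One correction to your closing aside, which does not affect the proof: trivial center of $\Gamma$ would \emph{not} substitute for perfectness in recovering $\Gamma\times1$. A nonzero homomorphism $\phi\colon\Gamma\to Z(F)$ gives the automorphism $(g,f)\mapsto(g,\phi(g)f)$, which moves $\Gamma\times1$ whatever $Z(\Gamma)$ is.
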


\begin{proof}
Every finite subgroup of \(\pi\) projects trivially to the torsion-free
group \(\Gamma\), and is therefore contained in \(F\).  Thus \(F\)
is the unique maximal finite normal subgroup of \(\pi\), hence is
characteristic.  Every automorphism of \(\pi\) consequently induces
a pair of automorphisms of \(F\) and \(\pi/F\cong\Gamma\).

The kernel of the induced map
\[
\Out(\Gamma\times F)
\longrightarrow
\Out(\Gamma)\times\Out(F)
\]
is represented by central cross-homomorphisms.  The possible maps
\(F\to Z(\Gamma)\) vanish because \(Z(\Gamma)=1\), and the possible
maps \(\Gamma\to Z(F)\) vanish because \(\Gamma\) is perfect.  Hence
the displayed map is injective.  Mostow rigidity gives finiteness of
\(\Out(\Gamma)\) \cite{Mostow}, while \(\Out(F)\) is finite because
\(F\) is finite.  Therefore \(\Out(\pi)\) is finite.
\end{proof}

\section{Verification of the reduced rational \texorpdfstring{\(L\)}{L}-theory hypothesis}
\label{sec:odd-L-verification}

The reduced target \(\cR_N(\pi)\) and the map
\(\operatorname{sym}_\pi\) were defined in
\cref{sec:odd-general-criterion}.  For the product group
\(\pi=\Gamma\times F\), we now calculate this target explicitly.
Let
\[
R_{\Q}(F)=R_{\C}(F)\otimes\Q,
\qquad
R^{\pm}_{\Q}(F)
=
\{x\in R_{\Q}(F)\mid\overline{x}=\pm x\}.
\]

\begin{proposition}\label{prop:odd-rational-summand}
Relative to the product decomposition \(\pi=\Gamma\times F\), there is
an isomorphism
\begin{equation}\label{eq:odd-L-product-identification}
L^N(\Z\pi)\otimes\Q
\;\cong\;
H_3(B\Gamma;\Q)\otimes
\bigl(L^{N-3}(\Z F)\otimes\Q\bigr).
\end{equation}
Under this isomorphism, the projection of the symmetric-signature image
is
\begin{equation}\label{eq:odd-signature-image}
\operatorname{im}(\sigma^*)
=
\begin{cases}
0,
 & N\equiv1\pmod4,\\[2mm]
H_3(B\Gamma;\Q)\otimes
\Q\operatorname{reg}_F,
 & N\equiv3\pmod4.
\end{cases}
\end{equation}
Consequently,
\begin{equation}\label{eq:odd-reduced-target-explicit}
\cR_N(\pi)
\;\cong\;
H_3(B\Gamma;\Q)\otimes
\begin{cases}
R^-_{\Q}(F),&N\equiv1\pmod4,\\[2mm]
R^+_{\Q}(F)/\Q\operatorname{reg}_F,
&N\equiv3\pmod4.
\end{cases}
\end{equation}
In particular, \(\cR_N(\pi)\neq0\) for every odd \(N\geq9\).
\end{proposition}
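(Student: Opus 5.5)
The plan is to compute both \(L^N(\Z\pi)\otimes\Q\) and the symmetric-signature image through the rational assembly map. For \(\pi=\Gamma\times F\) this map should be an isomorphism, and it is natural with respect to the inclusion \(\Gamma=\Gamma\times1\hookrightarrow\pi\) and the projection \(\pi\to\Gamma\).

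\textbf{Step 1 (assembly).} The group \(\pi=\Gamma\times F\) is hyperbolic: it is commensurable with the cocompact lattice \(\Gamma\subset\mathrm{Isom}(\mathbb H^3)\). Hence the \(L\)-theoretic Farrell--Jones conjecture holds for \(\pi\) by Bartels--L\"uck. Rationally, the passage from virtually cyclic to finite isotropy is an isomorphism, and symmetric and quadratic \(L\)-theory agree after inverting \(2\). Every finite subgroup of \(\pi\) lies in \(F\), as in the proof of \cref{lem:odd-out-finite}. Therefore \(E\Gamma\), with \(\pi\) acting through the projection to \(\Gamma\), is a model for \(\underline E\pi\). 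This should give
\[
L^N(\Z\pi)\otimes\Q\cong H_N\bigl(B\Gamma;\mathbf L(\Z F)\bigr)\otimes\Q .
\]
A rational spectrum splits as a product of Eilenberg--MacLane spectra, so the right-hand side is
\[
\bigoplus_i H_i(B\Gamma;\Q)\otimes\bigl(L^{N-i}(\Z F)\otimes\Q\bigr).
\]
By the multisignature theorem (\cite[13A]{Wall}), \(L^j(\Z F)\otimes\Q\) vanishes for odd \(j\). It equals \(R^+_\Q(F)\) for \(j\equiv0\pmod 4\) and \(R^-_\Q(F)\) for \(j\equiv2\pmod4\). Since \(N\) is odd, only odd \(i\) contribute. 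By \eqref{eq:Gamma-homology}, only \(i=3\) survives. This gives \eqref{eq:odd-L-product-identification}. I would phrase this as an equivariant Chern character, following L\"uck, to make the splitting canonical.

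\textbf{Step 2 (signature image).} The projection \(B\pi\to B\Gamma\) is a rational homology equivalence because \(BF\) is rationally acyclic. Hence \(\Omega_N^{SO}(B\pi)\otimes\Q\cong\Omega_N^{SO}(B\Gamma)\otimes\Q\), and every rational class is represented by a map factoring through \(B\Gamma\times\mathrm{pt}\). Together with \cref{lem:MSO-MSTOP-comparison}, this shows that \(\operatorname{im}\sigma^*\) is the image of
\[
\Omega_N(B\Gamma)\otimes\Q\to L^N(\Z\Gamma)\otimes\Q\to L^N(\Z\pi)\otimes\Q .
\]
Under assembly, the symmetric signature corresponds to the \(\mathbf L(\Z)\)-orientation. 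Inclusion \(1\hookrightarrow F\) is compatible with assembly, so this image is
\[
H_3(B\Gamma;\Q)\otimes\operatorname{im}\bigl(L^{N-3}(\Z)\otimes\Q\to L^{N-3}(\Z F)\otimes\Q\bigr).
\]
Induction from the trivial group sends the generator to the multisignature of \(\Z F\), namely \(\operatorname{reg}_F\). Also \(L^{N-3}(\Z)\otimes\Q\) is \(\Q\) when \(N-3\equiv0\pmod4\) and \(0\) when \(N-3\equiv2\pmod4\). This yields \eqref{eq:odd-signature-image}. It remains to check surjectivity onto \(H_3(B\Gamma;\Q)\otimes\Q\operatorname{reg}_F\). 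This holds because \(H_3(B\Gamma;\Q)\) is spanned by the fundamental class of \(H\), and \(H\times(\text{signature-one }(N-3)\)-manifold\()\) realizes it.

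\textbf{Step 3 (conclusion).} Taking cokernels gives \eqref{eq:odd-reduced-target-explicit}. Since \(H_3(B\Gamma;\Q)\cong\Q\), nonvanishing for odd \(N\geq9\) reduces to \cref{prop:sl27-all-even}, applied with \(d'=(N-3)/2\geq3\). For \(N\equiv1\pmod4\) use \(\chi_3-\overline\chi_3\); for \(N\equiv3\pmod4\) use the trivial character modulo \(\operatorname{reg}_F\).

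The main obstacle is Step 1 together with the naturality statement in Step 2. One must justify that the rational Farrell--Jones splitting identifies the symmetric-signature image with the \(\mathbf L(\Z)\to\mathbf L(\Z F)\) induction. The normalization sending \(1\mapsto\operatorname{reg}_F\) must also be checked in the same convention as Wall's multisignature. I would first try to establish this by naturality of assembly under the group homomorphisms \(1\to F\) and \(\Gamma\to\Gamma\times F\).
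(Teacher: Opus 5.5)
Your proof is correct. It follows the same four-step outline as the paper (Farrell--Jones, survival of only the $H_3$ term, the signature image via induction along $1\le F$, then the quotient), but the technical route differs in two places.

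In Step 1 the paper applies L\"uck--Reich's equivariant Chern character. That formula decomposes $L_N(\Z\pi)\otimes\Q$ as a sum, over conjugacy classes of finite cyclic subgroups $C$, of Weyl-group coinvariants $H_p(BZ_\pi C;\Q)\otimes_{\Q[W_\pi C]}\theta_C L_q(\Z C)\otimes\Q$. The paper then checks that $Z_\pi C=\Gamma\times Z_FC$, with trivial Weyl action on $H_*(B\Gamma;\Q)$, and reassembles the $C$-summands into $L_q(\Z F)\otimes\Q$ by applying the same formula to $F$. You instead observe that $E\Gamma$, inflated along $\pi\to\Gamma$, is already a model for $\underline{E}\pi$. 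So the source of assembly is $H_N(B\Gamma;\mathbf{L}(\Z F))$, and the rational splitting of spectra finishes the computation with no cyclic-subgroup bookkeeping.

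In Step 2 the paper uses the Chern--Dold character of $MSO$ on $B\pi$. It argues that bordism classes live on the free space $E\pi$, so only the $C=1$ component of the equivariant Chern character appears, which yields $\operatorname{ind}_1^F$. You instead note that $B\Gamma\to B\pi$ is a rational bordism isomorphism because $BF$ is rationally acyclic, and then use naturality of $\sigma^*$. The image is then read off from the coefficient map $\mathbf{L}(\Z)\to\mathbf{L}(\Z F)$, which is a cleaner reduction.

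Your route is more elementary and exploits the product structure: every finite subgroup lies in the single normal finite subgroup $F$, which centralizes $\Gamma$. The paper's route is heavier, but it invokes a general formula with an explicitly stated decoration, and that formula would still apply when finite subgroups carry nontrivial Weyl actions.

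You should make explicit the points you already flag:
\begin{enumerate}
\item the comparison from finite to virtually cyclic isotropy, and between decorations, after inverting $2$;
\item compatibility of $H^\pi_N(E\Gamma;\mathbf{L})\cong H_N(B\Gamma;\mathbf{L}(\Z F))$ with induction along $\Gamma\hookrightarrow\pi$. This follows from the induction structure, since the projection $\pi\times_\Gamma E\Gamma\to E\Gamma$ sends $\pi/1$-cells to $\pi/F$-cells and so induces $\mathbf{L}(\Z)\to\mathbf{L}(\Z F)$ on coefficients.
\end{enumerate}
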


\begin{proof}
We divide the proof into four steps.  Throughout the intermediate
calculation, \(L_*\) denotes the projective quadratic decoration
\(L_*^{\langle0\rangle}\).  The comparison
\eqref{eq:odd-decoration-comparison} identifies its rationalization
with both the simple Wall group used for realization and the symmetric
group occurring in \(\cR_N(\pi)\).

\smallskip
\noindent
\emph{Step 1: the equivariant Chern-character decomposition.}
The group \(\Gamma\) is word-hyperbolic and \(F\) is finite, so
\(\pi=\Gamma\times F\) is word-hyperbolic.  Hence the
Farrell--Jones assembly map in \(L\)-theory is an isomorphism for \(\pi\) \cite{BartelsLueck}.  The equivariant Chern character of
\cite[Theorem~178]{LueckReich} is stated for every decoration
\(\langle j\rangle\) with \(j\leq1\); we use it with \(j=0\).
It gives
\begin{equation}\label{eq:odd-chern-character-full}
L_N(\Z\pi)\otimes\Q
\cong
\bigoplus_{\substack{p+q=N\\(C)\in(\mathrm{FCY}(\pi))}}
H_p(BZ_\pi C;\Q)
\otimes_{\Q[W_\pi C]}
\bigl(\theta_C L_q(\Z C)\otimes\Q\bigr),
\end{equation}
where \((C)\) runs through the conjugacy classes of finite cyclic
subgroups, \(Z_\pi C\) is the centralizer,
\(W_\pi C=N_\pi C/Z_\pi C\), and \(\theta_C\) is the primitive
cyclic idempotent; see \cite[Theorem~178]{LueckReich}.

Because \(\Gamma\) is torsion-free, every finite cyclic subgroup of
\(\pi\) is of the form \(\{1\}\times C\) with \(C\leq F\).  Moreover,
\(\pi\)-conjugacy of such subgroups is the same as \(F\)-conjugacy,
and
\begin{equation}\label{eq:odd-centralizer-weyl}
Z_\pi C=\Gamma\times Z_FC,
\qquad
N_\pi C=\Gamma\times N_FC,
\qquad
W_\pi C=W_FC.
\end{equation}
Since \(Z_FC\) is finite,
\[
H_p(BZ_\pi C;\Q)
\cong H_p(B\Gamma;\Q).
\]
This is an isomorphism of \(\Q[W_FC]\)-modules, where the Weyl group
acts trivially on the right-hand side: conjugation by an element of
\(N_FC\) is the identity on the direct factor \(\Gamma\).  It follows
that the summand indexed by \(C\) in
\eqref{eq:odd-chern-character-full} is
\begin{equation}\label{eq:odd-separate-gamma-F}
H_p(B\Gamma;\Q)\otimes
\left(
 \Q\otimes_{\Q[W_FC]}
 \theta_C L_q(\Z C)\otimes\Q
\right).
\end{equation}

Apply the same Chern-character formula to the finite group \(F\).
For a finite centralizer \(Z_FC\), only its degree-zero rational
homology survives.  Therefore
\begin{equation}\label{eq:odd-artin-LF}
\bigoplus_{(C)\in(\mathrm{FCY}(F))}
\Q\otimes_{\Q[W_FC]}
\bigl(\theta_C L_q(\Z C)\otimes\Q\bigr)
\cong L_q(\Z F)\otimes\Q.
\end{equation}
Thus the Weyl-group coinvariants in
\eqref{eq:odd-separate-gamma-F} are not being discarded: their sum is
exactly the rational finite-group \(L\)-group.  Combining
\eqref{eq:odd-chern-character-full}--\eqref{eq:odd-artin-LF} gives
\begin{equation}\label{eq:odd-product-all-p}
L_N(\Z\pi)\otimes\Q
\cong
\bigoplus_{p+q=N}
H_p(B\Gamma;\Q)\otimes
\bigl(L_q(\Z F)\otimes\Q\bigr).
\end{equation}

\smallskip
\noindent
\emph{Step 2: only the degree-three term survives.}
By \eqref{eq:Gamma-homology}, the rational homology of \(B\Gamma\)
is nonzero only in degrees zero and three.  If \(p=0\), then
\(q=N\) is odd.  The odd-dimensional \(L\)-groups of a finite group
are torsion, and hence
\[
L_N(\Z F)\otimes\Q=0.
\]
The only nonzero term in \eqref{eq:odd-product-all-p} is consequently
\(p=3\), \(q=N-3\).  Passing by rational symmetrization to symmetric
\(L\)-theory proves \eqref{eq:odd-L-product-identification}.

\smallskip
\noindent
\emph{Step 3: the symmetric-signature indeterminacy.}
The rational Chern--Dold character for oriented bordism gives
\begin{equation}\label{eq:odd-MSO-splitting}
\Omega_N^{SO}(B\pi)\otimes\Q
\cong
\bigoplus_{r\geq0}
H_{N-4r}(B\pi;\Q)
\otimes
\bigl(\Omega_{4r}^{SO}\otimes\Q\bigr).
\end{equation}
Since \(F\) is finite, \(BF\) is rationally acyclic, so
\(H_*(B\pi;\Q)\cong H_*(B\Gamma;\Q)\).  Since \(N\) is odd, a
summand in \eqref{eq:odd-MSO-splitting} can occur only in homological
degree three.  Such a summand exists exactly when \(N\equiv3\pmod4\),
and in that case
\begin{equation}\label{eq:odd-MSO-degree-three}
\Omega_N^{SO}(B\pi)\otimes\Q
\cong
H_3(B\Gamma;\Q)
\otimes
\bigl(\Omega_{N-3}^{SO}\otimes\Q\bigr).
\end{equation}
For \(N\equiv1\pmod4\), the group on the left of
\eqref{eq:odd-MSO-splitting} is zero, proving the first line of
\eqref{eq:odd-signature-image}.

Suppose now that \(N\equiv3\pmod4\).  Under
\eqref{eq:odd-MSO-degree-three} and
\eqref{eq:odd-L-product-identification}, the required compatibility is
the commutative diagram
\begin{equation}\label{eq:odd-signature-chern-compatibility}
\begin{tikzcd}[column sep=5.2em,row sep=large]
\Omega_N^{SO}(B\Gamma\times BF)\otimes\Q
  \arrow[r,"\sigma^*"]
  \arrow[d,"\operatorname{ch}^{(3)}_{MSO}"']
& L^N(\Z[\Gamma\times F])\otimes\Q
  \arrow[d,"\operatorname{ch}^{(3)}_{L}"] \\
H_3(B\Gamma;\Q)\otimes
  \bigl(\Omega_{N-3}^{SO}\otimes\Q\bigr)
  \arrow[r,
    "\operatorname{id}\otimes
     (\operatorname{ind}_1^F\circ\sigma^*)"']
& H_3(B\Gamma;\Q)\otimes
  \bigl(L^{N-3}(\Z F)\otimes\Q\bigr).
\end{tikzcd}
\end{equation}
Here the left vertical map is the degree-three part of the rational
Chern--Dold character and the right vertical map is the degree-three
equivariant Chern-character identification constructed in Steps 1--2.
Both are isomorphisms in total degree \(N\).

We spell out why the coefficient homomorphism in the bottom row is
precisely induction from the trivial subgroup.  The symmetric
signature is the assembly of the \(L\)-homology fundamental class
obtained from the Sullivan--Ranicki orientation
\(MSO\to\mathbb L^\bullet\); see
\cite[Propositions~16.15--16.16]{RanickiALT}.  A bordism class over
\(B\pi=E\pi/\pi\) is represented equivariantly on the free
\(\pi\)-space \(E\pi\).  Therefore
\[
(E\pi)^C=\varnothing
\qquad
\text{for every nontrivial finite cyclic subgroup }C\leq\pi.
\]
Its equivariant Chern character consequently has only the component
indexed by \(C=1\).  After the \(\Gamma\)- and \(F\)-variables are
separated as in Step 1, that component is extension of scalars along
\(\Z\to\Z F\), namely \(\operatorname{ind}_1^F\).  Naturality and
compatibility with induction for the equivariant Chern character are
given in \cite[Theorems~0.3 and~6.3]{LueckChern}.

Commutativity of \eqref{eq:odd-signature-chern-compatibility} now follows
from naturality of the two Chern characters, induction along
\(1\leq F\), and the cartesian-product formula for symmetric
signatures; see \cite[Theorem~178]{LueckReich} and
\cite[Proposition~8.1(i)]{RanickiSurgeryII}.  Thus the map on
\eqref{eq:odd-MSO-degree-three} is explicitly
\begin{equation}\label{eq:odd-signature-coefficient-map}
\operatorname{id}_{H_3(B\Gamma;\Q)}
\otimes
\left(
\Omega_{N-3}^{SO}\otimes\Q
\xrightarrow{\ \sigma^*\ }
L^{N-3}(\Z)\otimes\Q
\xrightarrow{\ \operatorname{ind}_1^F\ }
L^{N-3}(\Z F)\otimes\Q
\right).
\end{equation}
Here \(\operatorname{ind}_1^F\) is extension of scalars along
\(\Z\to\Z F\).  Since \(N-3\) is divisible by four, the first map in
parentheses is the rational signature map.  Its target is
one-dimensional over \(\Q\), and a suitable product of complex
projective spaces has nonzero signature; hence the map is surjective.
Under the finite-group multisignature isomorphism,
extension of scalars sends the one-dimensional form to its tensor
product with \(\Q F\); its character is therefore
\(\operatorname{reg}_F\).  Hence the image of the coefficient map in
\eqref{eq:odd-signature-coefficient-map} is exactly
\(\Q\operatorname{reg}_F\).  This proves the second line of
\eqref{eq:odd-signature-image}.

\smallskip
\noindent
\emph{Step 4: identify the quotient and prove nonvanishing.}
Put \(q=N-3\). Recall that Wall's rational multisignature calculation gives
\begin{equation}\label{eq:odd-finite-multisignature-detailed}
L^q(\Z F)\otimes\Q
\cong
\begin{cases}
R^-_{\Q}(F),&q\equiv2\pmod4,\\[1mm]
R^+_{\Q}(F),&q\equiv0\pmod4;
\end{cases}
\end{equation}
see \cite[Chapter~13A]{Wall} and \cite{Petrie}.  Quotienting
\eqref{eq:odd-L-product-identification} by
\eqref{eq:odd-signature-image} now gives
\eqref{eq:odd-reduced-target-explicit}.

Finally, \(R^-_{\Q}(F)\neq0\): inflation of the nonisomorphic
complex-conjugate characters \(\chi_3,\overline\chi_3\) of
\(\PSL(2,7)\) gives
\[
0\neq\chi_3-\overline\chi_3\in R^-_{\Q}(F)
\]
\cite{Atlas}.  Also
\(R^+_{\Q}(F)/\Q\operatorname{reg}_F\neq0\), since the trivial
character is self-conjugate and cannot be a rational multiple of the
regular character: evaluation at any nonidentity element of \(F\)
would otherwise give \(1=0\).  This proves the final assertion.
\end{proof}

\section{Verification of the antisimple realization hypothesis}
\label{sec:odd-antisimple-verification}

The notion of an antisimple manifold was introduced in
\cref{def:antisimple}.  We now construct the realization required by
\cref{thm:odd-criterion}.

\begin{lemma}\label{lem:odd-kervaire-antisimple}
There exist a smooth oriented integral homology \(m\)-sphere \(X\), a
compact smooth oriented \((m+1)\)-manifold \(V\), and a classifying map
\(\lambda:X\to B\pi\) such that:
\begin{enumerate}[label=\textup{(\roman*)}]
\item \(\partial V=X\), and \(\lambda\) extends to a map
      \(\overline\lambda:V\to B\pi\);
\item \(\pi_1(X)\to\pi_1(V)\) is an isomorphism and both groups are
      identified with \(\pi\);
\item \(V\) has a handle decomposition with handles only of indices
      \(0,1,2,3\);
\item \(X\) is antisimple.
\end{enumerate}
More precisely, let \(C^h_*\) be the finite free
\(\Z\pi\)-chain complex determined by the handle decomposition of
\(V\).  Then \(C^h_i=0\) for
\(i\notin\{0,1,2,3\}\), and \(C_*(\widetilde X)\) is
chain-homotopy equivalent to a finite free complex \(P\) satisfying
\begin{equation}\label{eq:antisimple-explicit-modules}
P_r\cong C^h_r\oplus (C^h)^{m-r},
\end{equation}
where
\[
(C^h)^j=\operatorname{Hom}_{\Z\pi}(C^h_j,\Z\pi)
\]
with the left-module structure induced by the standard involution.
Consequently,
\begin{equation}\label{eq:antisimple-support}
P_r=0
\quad\text{unless}\quad
r\in\{0,1,2,3,m-3,m-2,m-1,m\}.
\end{equation}
\end{lemma}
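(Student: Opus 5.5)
We would follow Kervaire's construction of homology spheres from a presentation, carried out as a thickening. Fix a finite presentation of \(\pi=\Gamma\times F\); since \(\pi\) is superperfect, we may choose it with equally many generators and relators, the relators representing a basis of \(H_1\) of the free group. Build a finite \(3\)-dimensional CW complex \(K\) with \(\pi_1(K)\cong\pi\) and \(\widetilde H_*(K;\Z)=0\): take the presentation \(2\)-complex, whose \(H_1\) vanishes because the relator matrix is unimodular, and whose \(H_2=\pi_2\)-image is free abelian. Kill \(H_2\) with \(3\)-cells attached along spherical classes. This is possible because \(H_2(\pi;\Z)=0\), so the Hurewicz map \(\pi_2\to H_2\) of the \(2\)-complex is onto. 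The resulting \(K\) is acyclic, with cells only in dimensions \(0,1,2,3\).

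Since \(m+1\geq 9>2\cdot 3+1\), \(K\) embeds in \(\mathbb R^{m+1}\) (more precisely, a handlebody realization can be built directly). Let \(V\) be a regular neighbourhood, that is, the smooth handlebody with handles of index \(0,1,2,3\) modelled on the cells of \(K\). This gives (iii), and \(V\simeq K\) is acyclic. Set \(X=\partial V\). General position (codimension of the core is at least \(m+1-3\geq 6\)) shows that \(\pi_1(X)\to\pi_1(V)\) is an isomorphism, giving (ii). Lefschetz duality for the acyclic \(V\) shows that \(X\) is an integral homology \(m\)-sphere. Put \(\overline\lambda:V\simeq K\to B\pi\), the classifying map for \(\pi_1\), and let \(\lambda\) be its restriction; this gives (i) together with orientation.

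For the chain model, note that \(\widetilde X\) is the boundary of \(\widetilde V\), whose handle chain complex is \(C^h_*\). The dual handle decomposition of \(V\) relative to \(X\) has handles of indices \(m+1-i\), with cellular complex \((C^h)^{m+1-*}\). Thus \(C_*(\widetilde V,\widetilde X)\simeq (C^h)^{m+1-*}\). The long exact sequence/cofiber of \(\widetilde X\to\widetilde V\to(\widetilde V,\widetilde X)\) identifies \(C_*(\widetilde X)\) with the desuspended mapping cone of the duality map \(C^h_*\to (C^h)^{m+1-*}\). Its modules are exactly \(C^h_r\oplus (C^h)^{m-r}\), which is \eqref{eq:antisimple-explicit-modules}. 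Alternatively, one can view \(X\) as the double of a handlebody sliced in half and read off handles of \(X\times I\).

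The support statement \eqref{eq:antisimple-support} follows immediately from the vanishing of \(C^h_i\) outside \(0\)–\(3\). Since \(m\geq 8\), we have \(m/2\geq4\) and \(m/2\leq m-4\), so \(P_{m/2}=0\); hence \(X\) is antisimple, which is (iv).

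The main obstacle is making the mapping-cone identification rigorous at chain level over \(\Z\pi\), with the correct equivariant duality and involution. We would first try to cite Wall's/Ranicki's treatment of the boundary of a thickening, namely that the symmetric complex of \(\partial V\) is the boundary of the symmetric pair of \((V,\partial V)\). This yields \(P=\Sigma^{-1}\operatorname{cone}(C^h\to C^{m+1-*})\) directly.
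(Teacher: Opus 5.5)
Your proposal is correct and follows essentially the paper's route. The shared steps are:
\begin{itemize}
\item a Kervaire handlebody $V$ with handles of index at most three;
\item the $\pi_1$-isomorphism $X\hookrightarrow V$, obtained from the high codimension of the cores (equivalently, the high indices of the dual handles);
\item the identification $C_*(\widetilde X)\simeq\Sigma^{-1}\operatorname{Cone}\bigl(C^h\to(C^h)^{m+1-*}\bigr)$, coming from the cofiber sequence of $(\widetilde V,\widetilde X)$ together with Poincar\'e--Lefschetz duality.
\end{itemize}
The paper makes this last identification rigorous exactly as you suggest, by transporting the relative symmetric Poincar\'e pair structure to the handle model.

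You should delete one aside: the claim that a superperfect group admits a balanced presentation with unimodular relator matrix. This is false in general, and if it held your $3$-cells would be unnecessary. Nothing in your argument depends on it: $H_1$ of any presentation complex of $\pi$ vanishes because $\pi$ is perfect, and your Hopf-sequence argument kills $H_2$ for an arbitrary finite presentation.
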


\begin{proof}
\emph{The Kervaire handlebody.}
Choose a finite presentation of the finitely presented superperfect
group \(\pi\).  Start with an \((m+1)\)-dimensional \(0\)-handle and
attach one \(1\)-handle for each generator.  Its boundary is a
connected sum of copies of \(S^1\times S^{m-1}\).  Attach
\(2\)-handles along disjoint framed circles representing the
relators.  The resulting boundary has fundamental group \(\pi\).
Kervaire's use of the Hopf exact sequence and the assumption
\(H_2(\pi;\Z)=0\) shows that the remaining second-homology classes can
be represented by disjoint framed embedded two-spheres.  Attaching
\(3\)-handles along these spheres kills them.  The outgoing boundary
is a smooth integral homology \(m\)-sphere \(X\) with fundamental
group \(\pi\); see the proof of
\cite[Theorem~1]{KervaireHomologySpheres}.  Denote the resulting
handlebody by \(V\).  By construction,
\[
\partial V=X,
\qquad
V\text{ has handles only in indices }0,1,2,3.
\]
This proves (iii), as well as the homology-sphere and fundamental-group
statements needed below.

\smallskip
\noindent
\emph{The fundamental group of the boundary.}
Turn the handle decomposition upside down and regard \(V\) as obtained
from \(X\times I\) by attaching the dual handles.  The dual indices are
\[
(m+1)-3,\ (m+1)-2,\ (m+1)-1,\ (m+1)-0,
\]
namely
\[
m-2,m-1,m,m+1.
\]
Since \(m\geq8\), every dual handle has index at least six.  In
particular, no relative handle of index one or two occurs, and the
inclusion
\[
X\hookrightarrow V
\]
induces an isomorphism on fundamental groups.  Choose
\(\overline\lambda:V\to B\pi\) to classify the universal cover of
\(V\), using this identification, and put
\(\lambda=\overline\lambda|_X\).  This proves (i) and (ii).

\smallskip
\noindent
\emph{The algebraic boundary, with the two chain models separated.}
Put \(R=\Z\pi\), equipped with the standard involution
\(g\mapsto g^{-1}\).  First choose a finite cellular structure on the
pair \((V,X)\) for which \(X\subset V\) is a subcomplex, and write
\[
A=C_*(\widetilde X),
\qquad
E=C_*(\widetilde V),
\qquad
D=C_*(\widetilde V,\widetilde X).
\]
These are the cellular complexes of this particular CW-pair; at this
stage no support assertion is made about \(E\).  There is a degreewise
split short exact sequence of finite free \(R\)-complexes
\begin{equation}\label{eq:antisimple-short-exact}
0\longrightarrow A
\xrightarrow{\,i\,}E
\xrightarrow{\,q\,}D
\longrightarrow0.
\end{equation}

The relative symmetric construction makes
\[
i:A\longrightarrow E
\]
an \((m+1)\)-dimensional symmetric Poincar\'e pair.  Its leading
duality map is the chain-level equivariant Poincar\'e--Lefschetz
equivalence
\[
E^{m+1-*}\xrightarrow{\simeq}D.
\]
Choose a chain-homotopy inverse
\begin{equation}\label{eq:antisimple-PL-duality}
\delta:D
\xrightarrow{\simeq}
E^{m+1-*}.
\end{equation}
For the relative symmetric construction, the algebraic Poincar\'e-pair
structure, and its invariance under equivalence of pairs, see
\cite[Propositions~6.2--6.3]{RanickiSurgeryII} and
\cite[Definitions~1.6--1.7, 1.14 and Remark~2.5]{RanickiALT}.

Now let \(C^h\) denote the finite free handle chain complex of \(V\).
The handle decomposition supplies
\[
C^h_j=0\qquad(j\notin\{0,1,2,3\}).
\]
Since \(C^h\) and \(E\) are two finite free chain models of the same
universal cover, choose chain-homotopy inverse equivalences
\[
u:C^h\xrightarrow{\simeq}E,
\qquad
v:E\xrightarrow{\simeq}C^h.
\]
Set \(i^h=vi:A\to C^h\).  The homotopy \(uvi\simeq i\), together
with \(u\), gives an equivalence from the pair \(i^h:A\to C^h\) to
the cellular pair \(i:A\to E\).  Transport the entire symmetric
Poincar\'e-pair structure along this equivalence.  Thus the low-handle
model is used only after the relative symmetric structure has been
transferred to it; it is not identified literally with the cellular
complex \(E\).

On the underlying chain complexes put
\[
\varphi=\delta q:
E\longrightarrow E^{m+1-*}.
\]
The exact sequence \eqref{eq:antisimple-short-exact} gives the
distinguished triangle
\[
A\longrightarrow E\xrightarrow{q}D\longrightarrow\Sigma A.
\]
Replacing \(D\) by the chain-equivalent complex \(E^{m+1-*}\) gives
\[
A\simeq
\Sigma^{-1}\operatorname{Cone}(\varphi).
\]

Let
\[
u^\#:E^{m+1-*}\longrightarrow(C^h)^{m+1-*}
\]
be the dual chain equivalence induced by \(u\), and define
\[
\varphi^h
=u^\#\varphi u:
C^h\longrightarrow(C^h)^{m+1-*}.
\]
Because \(u\) and \(v\) are homotopy inverses,
\[
v^\#u^\#=(uv)^\#\simeq 1.
\]
Consequently \(u\) on the source and \(v^\#\) on the target give a
homotopy-commutative comparison between the arrows \(\varphi^h\) and
\(\varphi\).  Both comparison maps are chain equivalences, so the
induced map of mapping cones is a chain equivalence.  Therefore
\begin{equation}\label{eq:antisimple-correct-cone}
A
\simeq
P:=
\Sigma^{-1}\operatorname{Cone}(\varphi^h).
\end{equation}
The transported higher symmetric data identifies this equivalence
with the algebraic-boundary equivalence of symmetric complexes.  For
antisimplicity only its underlying chain equivalence is needed.

With the conventions
\[
\operatorname{Cone}(\varphi^h)_s
=
(C^h)^{m+1-s}\oplus C^h_{s-1},
\qquad
(\Sigma^{-1}F)_r=F_{r+1},
\]
the underlying modules of \(P\) are
\[
P_r
=
(C^h)^{m-r}\oplus C^h_r
\cong
C^h_r\oplus(C^h)^{m-r}.
\]
This proves \eqref{eq:antisimple-explicit-modules}.  Since
\(C^h_j=0\) outside degrees \(0,1,2,3\), the handle summand contributes
only for \(0\leq r\leq3\), while the dual summand contributes only
for \(m-3\leq r\leq m\).  Hence
\eqref{eq:antisimple-support} follows.

Finally, \(m\geq8\) and \(m\) is even, so
\[
3<\frac m2<m-3.
\]
Thus \(P_{m/2}=0\).  The complex \(P\) is finite free, hence finite
projective, over \(\Z\pi\), and
\eqref{eq:antisimple-correct-cone} identifies it with
\(C_*(\widetilde X)\) up to chain homotopy.  Therefore \(X\) is
antisimple, proving (iv).
\end{proof}

\section{Verification of the criterion and proof of the odd-dimensional theorem}
\label{sec:odd-criterion-application}

We can now prove the uniform odd-dimensional result by checking the
three hypotheses of \cref{thm:odd-criterion}.

\begin{proof}[Proof of \cref{thm:odd-main}]
Let \(N=2d+1\geq9\) and put \(m=N-1\).  Take
\[
\pi=\Gamma\times\SL(2,7)
\]
as in \cref{sec:odd-product-group}.  The group is finitely presented,
and \(\Out(\pi)\) is finite by \cref{lem:odd-out-finite}.

The calculation in \cref{prop:odd-rational-summand} gives
\[
\cR_N(\pi)\neq0.
\]
The Kervaire construction in \cref{lem:odd-kervaire-antisimple}
produces a smooth integral homology \(m\)-sphere \(X\), a compact
smooth \(N\)-manifold \(V\) with \(\partial V=X\), and a classifying
map over \(B\pi\) such that \(X\hookrightarrow V\) induces an
isomorphism on fundamental groups and \(X\) is antisimple.  Thus all
hypotheses of \cref{thm:odd-criterion} hold.  Applying that theorem
produces the required infinite family of compact contractible smooth
\(N\)-manifolds.
\end{proof}

\begin{corollary}\label{cor:dimension-nine}
There is a proper homotopy type containing infinitely many pairwise
nonhomeomorphic smooth open contractible nine-manifolds, each of which
is the interior of a compact contractible smooth nine-manifold.
\end{corollary}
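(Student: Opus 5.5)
This corollary is the special case \(N=9\) of \cref{thm:odd-main}, so the plan is simply to specialize that theorem and unwind the conclusion. Take \(N=9=2\cdot4+1\), so \(d=4\) and \(m=8\). This is the smallest admissible value: \(m\geq8\) is exactly what the antisimple realization in \cref{lem:odd-kervaire-antisimple} needs, since it requires \(3<m/2<m-3\), i.e.\ \(3<4<5\). Take \(\pi=\Gamma\times\SL(2,7)\).

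We then check the three hypotheses of \cref{thm:odd-criterion} in this dimension.
\begin{enumerate}[label=\textup{(\roman*)}]
\item \(\Out(\pi)\) is finite by \cref{lem:odd-out-finite}.
\item The required Kervaire handlebody \(V^9\), with antisimple boundary \(X^8\) and an isomorphism on \(\pi_1\), is supplied by \cref{lem:odd-kervaire-antisimple}.
\item Since \(9\equiv1\pmod4\), \cref{prop:odd-rational-summand} gives \(\cR_9(\pi)\cong H_3(B\Gamma;\Q)\otimes R^-_{\Q}(F)\). This is nonzero because \(H_3(B\Gamma;\Q)\cong\Q\) and \(\chi_3-\overline\chi_3\neq0\). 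Note that in this congruence class the symmetric-signature indeterminacy vanishes, so no quotient by \(\operatorname{reg}_F\) is needed.
\end{enumerate}

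\Cref{thm:odd-criterion} then yields an infinite family \(\{C_j\}_{j\in J}\) of compact contractible smooth \(9\)-manifolds. Their interiors are mutually properly homotopy equivalent and pairwise nonhomeomorphic. This is exactly the claim: the interiors \(\Int(C_j)\) are smooth open contractible nine-manifolds lying in a single proper homotopy type.

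There is no real obstacle, since all the substantive work sits in the cited results. The only point to verify is the dimensional bookkeeping, namely that \(N=9\) meets the hypothesis \(N\geq9\) of \cref{thm:odd-main} and the inequality \(m\geq8\) used in \cref{lem:odd-kervaire-antisimple}.
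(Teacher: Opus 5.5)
Your proposal is correct and takes the same approach as the paper: it sets \(N=9\), \(m=8\), and verifies the three hypotheses of \cref{thm:odd-criterion}. As in the paper, the nonzero reduced class lies in \(H_3(B\Gamma;\Q)\otimes R^-_{\Q}(\SL(2,7))\) (for instance \([H]\otimes(\chi_3-\overline\chi_3)\)), and antisimplicity holds because the Kervaire trace's boundary complex is supported in degrees \(0,1,2,3,5,6,7,8\), so the middle degree \(4\) vanishes.
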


\begin{proof}
For \(N=9\), one has \(m=8\) and \(N-3=6\).  The nonzero reduced
rational class may be taken in
\[
H_3(B\Gamma;\Q)
\otimes
R^-_{\Q}(\SL(2,7)).
\]
If \(\chi_3\) and \(\overline\chi_3\) are the nonisomorphic
complex-conjugate three-dimensional characters inflated from
\(\PSL(2,7)\), then
\[
[H]\otimes(\chi_3-\overline\chi_3)\neq0.
\]
The Kervaire trace has handles of index at most three, and the
algebraic boundary of its eight-dimensional boundary is supported in
degrees
\[
0,1,2,3,5,6,7,8.
\]
Thus the middle degree four vanishes, so the hypotheses of
\cref{thm:odd-criterion} are satisfied.
\end{proof}

\begin{proof}[Proof of \cref{thm:main-classification}]
Part \textup{(i)} is \cref{thm:low-dimensional-rigidity}.
Part \textup{(ii)} follows from \cref{thm:even-main} in every even
dimension \(N\geq6\), and from \cref{thm:odd-main} in every odd
dimension \(N\geq9\).
\end{proof}

\section{The unresolved dimension seven}
\label{sec:dimension-seven}

For \(N=7\), the boundary dimension is six, and the Kervaire trace has
no middle-dimensional gap to which the absolute higher-rho construction
applies.  Thus the odd-dimensional realization established above begins
only in dimension nine.

Hausmann's Whitehead--Tate obstruction does not repair this gap for the
binary icosahedral group
\[
\Delta=\langle a,b\mid a^5=b^3=(ab)^2\rangle.
\]
For an \(r\)-dimensional referred antisimple manifold it lies in
\[
\tau(M,g)\in
\widehat H^{\,r+1}\bigl(\Z/2;\Wh(\Delta)\bigr),
\]
and \cite[Proposition~4.4]{HausmannCellDispensability} realizes suitable
classes in the kernel of the Rothenberg boundary.  The nonzero classes in
\cite[Example~4.5(3)]{HausmannCellDispensability}, however, have degree
\(2j\), so Proposition~4.4 associates them with \(r=2j-1\), not
\(r=2j\).\footnote{Example~4.5 is printed with \(r=2j\), which is
inconsistent with Proposition~4.4's degree \(r+1\); the former is
therefore an index slip.}  In particular, they give no six-dimensional
homology sphere.

Moreover, Ushitaki proved \(\operatorname{SK}_1(\Z\Delta)=0\)
\cite[Theorem~A]{UshitakiMilnor}.  For finite groups,
\(\operatorname{SK}_1\) is the torsion subgroup of the Whitehead group,
and the standard involution is the identity modulo it; see
\cite[p.~404]{UshitakiMilnor} and
\cite[Corollary~7.5]{OliverWhitehead}.  Hence \(\Wh(\Delta)\) is
torsion-free and \(*=1\), so
\[
\widehat H^{\,7}\bigl(\Z/2;\Wh(\Delta)\bigr)
=
\frac{\ker(1+*)}{\operatorname{im}(1-*)}
=
\ker\bigl(2:\Wh(\Delta)\longrightarrow\Wh(\Delta)\bigr)
=
0.
\]
Thus Hausmann's construction with \(\Delta\) does not settle dimension
seven.

\begin{problem}[Dimension-seven nonrigidity]
\label{prob:dimension-seven}
Do there exist compact contractible smooth seven-manifolds \(C_0\) and
\(C_1\) such that
\[
\Int(C_0)\simeqp\Int(C_1)
\qquad\text{but}\qquad
\Int(C_0)\not\cong_{\TOP}\Int(C_1)?
\]
\end{problem}

By \cref{cor:all-bound}, it would suffice to find homotopy-equivalent
smooth integral homology six-spheres that are not topologically
\(h\)-cobordant.

\section{Dimension five and homology four-spheres}
\label{sec:dimension-five}

Kervaire proved that every smooth integral homology four-sphere bounds a
compact contractible smooth five-manifold
\cite[Theorem~3]{KervaireHomologySpheres}.  Thus filling is not the
obstruction, and the dimension-five question is the following.

\begin{problem}[Dimension-five nonrigidity]
\label{prob:dimension-five}
Do there exist compact contractible smooth five-manifolds \(C_0\) and
\(C_1\) such that
\[
\Int(C_0)\simeqp\Int(C_1)
\qquad\text{but}\qquad
\Int(C_0)\not\cong_{\TOP}\Int(C_1)?
\]
\end{problem}

For the boundary formulation, topological and smooth \(h\)-cobordism
coincide in the relevant dimension.

\begin{lemma}[Topological versus smooth \(h\)-cobordism]
\label{lem:homology-four-sphere-smoothing}
Let \(\Sigma_0\) and \(\Sigma_1\) be closed smooth integral homology
four-spheres.  They are topologically \(h\)-cobordant if and only if
they are smoothly \(h\)-cobordant.
\end{lemma}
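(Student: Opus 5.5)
The implication from smooth to topological is immediate, since a smooth \(h\)-cobordism is in particular a topological one. So the content is the converse. Let \((W;\Sigma_0,\Sigma_1)\) be a compact topological \(h\)-cobordism whose boundary \(\partial W=\Sigma_0\sqcup\Sigma_1\) carries the given smooth structures. I would show that \(W\) admits a smooth structure extending the one on \(\partial W\). With that smooth structure, \(W\) is a smooth \(h\)-cobordism, because the boundary inclusions are still homotopy equivalences.

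The tool is Kirby--Siebenmann smoothing theory relative to the boundary. In dimension five it applies exactly in the relative form needed here: for a compact topological \(5\)-manifold whose boundary is already smoothed, the smoothing theory of \cite{KirbySiebenmann} holds rel \(\partial\). First I extend a PL structure rel \(\partial W\). The single obstruction is the relative Kirby--Siebenmann class
\[
\operatorname{ks}(W,\partial W)\in H^4(W,\partial W;\Z/2).
\]
Then I pass from PL to \(\DIFF\). Since \(\PL/O\) is \(6\)-connected, every PL structure on a \(5\)-manifold that is compatible with a smooth structure on the boundary can be smoothed rel boundary. So everything reduces to showing that the relative \(\operatorname{ks}\)-class vanishes.

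To do this I compute the group itself. \(W\) is orientable, because a boundary inclusion is a homotopy equivalence and \(\Sigma_0\) is orientable, as in the proof of \cref{prop:nonhomeomorphic-spheres}. Poincar\'e--Lefschetz duality with \(\Z/2\) coefficients then gives
\[
H^4(W,\partial W;\Z/2)\cong H_1(W;\Z/2)\cong H_1(\Sigma_0;\Z/2)=0,
\]
since \(W\simeq\Sigma_0\) and \(\Sigma_0\) is an integral homology four-sphere. Hence the obstruction group vanishes, and \(W\) smooths rel \(\partial W\).

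The main obstacle is justifying that relative smoothing theory is valid in dimension exactly five. The absolute theory needs dimension at least six, or at least five with the boundary handled separately. I would cite the Kirby--Siebenmann essay statement that the classification of PL structures, and of smoothings, on a topological \(m\)-manifold rel a boundary that is already structured holds for \(m\geq5\). I would also take care that the structures induced on \(\partial W\) agree with the given smooth structures on \(\Sigma_0\) and \(\Sigma_1\), and not merely with some smoothing of them. If that citation turned out too delicate, a fallback is to first apply the \(5\)-dimensional product structure theorem to a collar of \(\partial W\), and then run the same obstruction computation on the complement of the collar.
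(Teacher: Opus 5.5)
Your proposal is correct and follows the paper's route: the relative Kirby--Siebenmann obstruction lies in $H^4(W,\partial W;\Z/2)\cong H_1(W;\Z/2)\cong H_1(\Sigma_0;\Z/2)=0$, and six-connectivity of $\PL/O$ then smooths $W$ rel boundary. The paper handles your dimension-five worry as your fallback suggests, by first extending the boundary smoothings over product collars so the structure is already given on an open neighborhood of $\partial W$; your orientability remark is harmless but unnecessary, since duality with $\Z/2$ coefficients needs no orientation.
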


\begin{proof}
Only the forward implication requires proof.  For a topological
\(h\)-cobordism \((W;\Sigma_0,\Sigma_1)\), extend the boundary smoothings
over product collars.  Its relative Kirby--Siebenmann obstruction lies
in
\[
H^4(W,\partial W;\Z/2)
\cong H_1(W;\Z/2)
\cong H_1(\Sigma_0;\Z/2)
=0,
\]
by Poincar\'e--Lefschetz duality and \(\Sigma_0\simeq W\).  Hence the
boundary PL structures extend~\cite{KirbySiebenmann}.  Since \(\PL/O\)
is six-connected, relative smoothing theory smooths \(W\) fixed on the
boundary collars
\cite[Theorem~2.1]{DaherPowellSmoothing}; see also
\cite{MadsenMilgram}.  Thus \(W\) is a smooth \(h\)-cobordism.
\end{proof}

\begin{problem}[Homology four-spheres and smooth \(h\)-cobordism]
\label{prob:homology-four-sphere-hcobordism}
Must every pair of homotopy-equivalent closed smooth integral homology
four-spheres be smoothly \(h\)-cobordant?
\end{problem}

\begin{proposition}[A sufficient boundary route in dimension five]
\label{prop:homology-four-sphere-to-dimension-five}
A negative answer to
\cref{prob:homology-four-sphere-hcobordism} gives an affirmative answer
to \cref{prob:dimension-five}.
\end{proposition}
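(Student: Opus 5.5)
The plan is to deduce the statement from \cref{thm:boundary-to-interior-transfer} with $N=5$. Assume \cref{prob:homology-four-sphere-hcobordism} has a negative answer. Then there are closed smooth integral homology four-spheres $\Sigma_0$ and $\Sigma_1$ with $\Sigma_0\simeq\Sigma_1$ that are not smoothly $h$-cobordant. The steps are as follows.

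First, promote the failure of smooth $h$-cobordism to failure of topological $h$-cobordism. This is exactly the content of \cref{lem:homology-four-sphere-smoothing}: if $\Sigma_0$ and $\Sigma_1$ were topologically $h$-cobordant, they would be smoothly $h$-cobordant, which contradicts the choice. So $\Sigma_0$ and $\Sigma_1$ are homotopy equivalent but not topologically $h$-cobordant.

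Second, fill both boundaries. Kervaire's theorem \cite[Theorem~3]{KervaireHomologySpheres}, in the four-dimensional form recalled at the start of \cref{sec:dimension-five}, gives compact contractible smooth five-manifolds $C_0$ and $C_1$ with $\partial C_i=\Sigma_i$. No homotopy-sphere correction is needed here, because the cited statement says every smooth homology four-sphere bounds. I would check this point against the source, since \cref{thm:kervaire-filling} as quoted requires $n\geq5$ and allows a $\Theta^n$ summand. Kervaire's paper does treat $n=4$ separately and obtains a contractible filling there, and that is the form I would invoke.

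Third, apply \cref{thm:boundary-to-interior-transfer} with $J=\{0,1\}$ and $N=5\geq2$. The $C_i$ are compact contractible with connected boundaries (by Poincar\'e--Lefschetz duality, or directly, since the boundaries are homology spheres). Their boundaries are homotopy equivalent and not topologically $h$-cobordant. The theorem then gives $\Int(C_0)\simeqp\Int(C_1)$ and $\Int(C_0)\not\cong_{\TOP}\Int(C_1)$, which answers \cref{prob:dimension-five} affirmatively.

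The only real input beyond earlier results is the filling step in boundary dimension four. That is where I expect any subtlety, namely in confirming that the cited Kervaire statement applies verbatim to smooth homology four-spheres. The rest is a direct combination of \cref{lem:homology-four-sphere-smoothing} and \cref{thm:boundary-to-interior-transfer}.
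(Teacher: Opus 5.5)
Your proposal is correct and matches the paper's argument. The paper chooses Kervaire fillings and gets $\Int(C_0)\simeqp\Int(C_1)$ from \cref{cor:boundary-homotopy-proper}. It then rules out a homeomorphism by combining \cref{prop:completion} with \cref{lem:homology-four-sphere-smoothing}, which is exactly what your use of \cref{thm:boundary-to-interior-transfer} packages. The paper also cites Kervaire's four-dimensional filling statement directly, in the same way you propose to.
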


\begin{proof}
Let \(\Sigma_0\simeq\Sigma_1\) be smooth integral homology four-spheres
that are not smoothly \(h\)-cobordant, and choose Kervaire fillings
\(\partial C_i=\Sigma_i\).  By
\cref{cor:boundary-homotopy-proper},
\[
\Int(C_0)\simeqp\Int(C_1).
\]
If the interiors were homeomorphic, \cref{prop:completion} would give a
topological, hence by \cref{lem:homology-four-sphere-smoothing} smooth,
\(h\)-cobordism between their boundaries, a contradiction.
\end{proof}

\begin{remark}
\Cref{prop:homology-four-sphere-to-dimension-five} is only sufficient;
the two open problems above need not be equivalent.  In particular,
\cref{prop:completion} supplies no converse from \(h\)-cobordant
boundaries to homeomorphic interiors.

By Wall's classification, the simply connected case of
\cref{prob:homology-four-sphere-hcobordism} is affirmative
\cite[Theorem~2]{WallSimplyConnected}.  A homeomorphic pair is likewise
topologically, hence smoothly, \(h\)-cobordant.  Thus any counterexample
must be non-simply-connected and nonhomeomorphic; the Hopf exact
sequence and \(H_1(\Sigma_i;\Z)=H_2(\Sigma_i;\Z)=0\) show that its
common fundamental group is superperfect.
\end{remark}

\bibliographystyle{amsplain}
\bibliography{bib}

\end{document}